\DeclareMathOperator{\Tr}{Tr} \DeclareMathOperator{\pt}{pt}
\DeclareMathOperator{\GL}{GL}
\DeclareMathOperator{\gl}{\mathfrak{gl}}
 \DeclareMathOperator{\Sym}{Sym}
\DeclareMathOperator{\Ho}{H}
\DeclareMathOperator{\Crit}{Crit} \DeclareMathOperator{\Hom}{Hom}
 \DeclareMathOperator{\tr}{tr}
 \DeclareMathOperator{\Surj}{Surj}
\DeclareMathOperator{\coh}{H}
\DeclareMathOperator{\codim}{codim}
\DeclareMathOperator{\TS}{TS}
\DeclareMathOperator{\Sh}{Sh}
\DeclareMathOperator{\Arg}{Arg}
\newcommand{\Z}{{\bf Z}}
\newcommand{\QQ}{\mathbb{Q}}
\newcommand{\Zn}{\mathbb{Z}}
\newcommand{\Nn}{\mathbb{N}}
\newcommand{\Cp}{\mathbb{C}}
\newcommand{\Hc}{\mathcal{H}}
\newcommand{\g}{\mathfrak{g}}
\newcommand{\h}{\mathfrak{h}}
\newcommand{\T}{\mathbb{T}}
\newtheorem{thm}{Theorem}[section]
\newtheorem{prop}[thm]{Proposition}
\newtheorem{example}[thm]{Example}
\newtheorem{defn}[thm]{Definition}
\newtheorem{rem}[thm]{Remark}
\newtheorem*{thm*}{Theorem}
\newcommand{\bcen}{\begin{center}}
\newcommand{\ecen}{\end{center}}
\def\dim{{\rm dim}}
\def\Im{{\rm Im}}
\title{Critical Cohomological Hall Algebra\\and Edge Contraction}
\date{}
\author{Yiqiang Li, Jie Ren}
\begin{document}
\maketitle

\begin{abstract}
We study the behaviors of cohomological Hall algebras and relevant subjects under an edge contraction. Given a quiver with potential and fix an arrow, the edge contraction is a way to construct a new quiver with potential. We show that there is an algebra homomorphism between the cohomological Hall algebras induced by the edge contraction, which preserves the Hopf algebra structure, Drinfeld double, mutation, and the dimensional reduction, and induces relations between scattering diagram and Donaldson-Thomas series.
\end{abstract}

\tableofcontents

\section{Introduction}

Numerous representation-theoretic objects, such as Kac-Moody Lie algebras and Hall algebras, can be associated with a graph. Investigating the behavior of these objects under edge contraction becomes a natural inquiry, and the significance of this exploration stems from its diverse applications, for instance Li-Samer \cite{LS}, Maksimau \cite{Mak}, Riche-Williamson \cite{RW}, with more likely to emerge from further investigation.

The initial examination of this matter was initiated in the paper \cite{Li1} by the first-named author, focusing on the behaviors of quantum groups and Hall algebras. In this article, our attention turns to critical cohomological Hall algebras, referred to as COHAs for conciseness.

COHAs, as defined by Kontsevich and Soibelman in \cite{KoSo3}, serve as cohomological analogues to Ringel's Hall algebras in \cite{Rin}. Simultaneously, they provide a rigorous mathematical framework for the algebras of BPS states in 4-dimensional quantum theories with $N=2$ spacetime supersymmetry. The COHA, denoted as $\mathcal H_{Q, W}$, is associated with an oriented graph $Q$, or quiver, along with a potential $W$, represented as a linear combination of cyclic paths in $Q$.

For an arrow $a_0: i_+\rightarrow i_-$ in $Q$, we denote the edge contraction of $Q$ along $a_0$ as $\widehat Q$, and similarly, the edge contraction of $W$ along $a_0$ as $\widehat W$. In the realm of quiver gauge theory, the edge contraction of a quiver with potential corresponds to the process of Higgsing, as elaborated in Remark 3.2. This article aims to demonstrate that

\begin{thm*}
There is an algebra homomorphism $\mathfrak c: \mathcal H_{Q, W}^= \to \mathcal H_{\widehat Q, \widehat W}$. Here $\mathcal H_{Q, W}^=$ is the subalgebra of $\mathcal H_{Q, W}$ restricted to the dimension vectors $\gamma=(\gamma^i)_{i\in I}$ such that $\gamma^{i_+}=\gamma^{i_-}$, where $I$ is the set of vertices of $Q$. Moreover, this homomorphism is compatible with the Hopf algebra structure on the COHAs.
\end{thm*}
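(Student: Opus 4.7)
The strategy is to realize both critical COHAs as equivariant vanishing cycle cohomology on representation stacks, and then to construct $\mathfrak{c}$ by restriction to the geometric slice where $X_{a_0}$ is the identity matrix. Concretely, for a dimension vector $\gamma$ with $\gamma^{i_+}=\gamma^{i_-}=n$, there is a canonical product decomposition
$$\Rep(Q,\gamma) \;\cong\; \Rep(\widehat Q,\widehat \gamma) \times \Mat_{n\times n},$$
where the second factor records the arrow $X_{a_0}$. Let $Z_\gamma := \{X_{a_0}=\Id_n\}$ and let $H_\gamma\subset G_\gamma$ be the diagonal subgroup with $g^{i_+}=g^{i_-}$. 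Then $Z_\gamma$ is $H_\gamma$-stable, the projection induces an $H_\gamma$-equivariant isomorphism $Z_\gamma \cong \Rep(\widehat Q, \widehat \gamma)$ (with $H_\gamma \cong G_{\widehat \gamma}$), and a direct combinatorial check on cycles gives $\Tr(W)|_{Z_\gamma}=\Tr(\widehat W)$, since every occurrence of $a_0$ in a cyclic path is simply replaced by $\Id_n$.

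The map $\mathfrak c$ is then defined as the composite of two standard operations on equivariant critical cohomology: first, restriction in equivariant cohomology along $H_\gamma\hookrightarrow G_\gamma$; second, pullback along the closed embedding $Z_\gamma\hookrightarrow \Rep(Q,\gamma)$. The latter pullback is well-defined on vanishing cycles because $Z_\gamma$ is a section of the smooth projection $\Rep(Q,\gamma)\to Z_\gamma$, so one can appeal to the compatibility of the nearby/vanishing cycle functors with smooth morphisms to identify $\iota^*\phi_{\Tr(W)}$ with $\phi_{\Tr(\widehat W)}$ (up to a cohomological shift and Tate twist accounting for the codimension $n^2$ of $Z_\gamma$, which is absorbed by the standard Thom--Sebastiani formalism). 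Identifying the target with $\mathcal H_{\widehat Q,\widehat W}(\widehat \gamma)$ yields a linear map, summing over $\gamma$ to give $\mathfrak c$.

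For the algebra compatibility, the COHA product is defined via a correspondence
$$\Rep(Q,\gamma_1)\times \Rep(Q,\gamma_2) \xleftarrow{\ \pi\ } \Rep(Q,\gamma_1,\gamma_2) \xrightarrow{\ p\ } \Rep(Q,\gamma_1+\gamma_2),$$
where the middle term parameterises representations together with a subrepresentation of dimension vector $\gamma_1$. Intersecting everywhere with the slices $\{X_{a_0}=\Id\}$ and noticing that the flag condition on $a_0$ is automatic when the ambient matrix is $\Id$ (so that both the sub and quotient inherit $X_{a_0}=\Id$), one obtains the analogous correspondence for $\widehat Q$. The algebra homomorphism property then follows from a diagram chase combining the naturality of equivariant restriction, the base-change formula for the closed embeddings, and functoriality of $\phi$ along the smooth retractions; the restrictions of potentials match by the combinatorial step above. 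Compatibility with the Hopf structure is established by the same method applied to the correspondence defining the coproduct (involving quotient rather than sub flags), which has the same shape and behaves identically under the slicing.

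The main technical obstacle is verifying that the pullback of vanishing cycle sheaves is well-behaved \emph{simultaneously} for all stacks appearing in the multiplication correspondence. The non-formal input is the dimensional-reduction-type identification $\iota^*\phi_{\Tr(W)}\simeq \phi_{\Tr(\widehat W)}[\text{shift}]$ along the section of a trivial vector bundle, which must be checked on the middle term $\Rep(Q,\gamma_1,\gamma_2)$ as well as on the outer terms; the $\pi$-fibres over the slices and the $p$-image need to be compared carefully to rule out contributions from components where the induced sub/quotient $X_{a_0}$ differs from the identity. Once this geometric compatibility is secured, the remaining signs, shifts, and equivariant twists are routine and the Hopf-algebra compatibility propagates without further input.
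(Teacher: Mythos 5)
Your overall strategy — identify $\mathcal H_{\widehat Q,\widehat W}$ with the equivariant vanishing-cycle cohomology of a ``slice'' inside $\Rep(Q,\gamma)$ where $X_{a_0}$ is fixed, and define $\mathfrak c$ by restriction — is indeed the same idea as the paper's. However the paper implements it differently, and the difference matters: the paper's map is $\mathfrak c = \epsilon_*\iota^*$, where $\iota$ is the \emph{open} embedding of $\mathbf M_\gamma^\heartsuit = \{X_{a_0}\text{ invertible}\}$ and $\epsilon$ is the isomorphism of quotient stacks $\mathbf M_\gamma^{\heartsuit}/\mathbf G_\gamma \cong \mathbf M_{\hat\gamma}/\mathbf G_{\hat\gamma}$ obtained by killing the free $\GL_{\gamma^{i_-}}$-action, whereas you work directly with the closed slice $Z_\gamma = \{X_{a_0} = \Id\}$ and the diagonal subgroup $H_\gamma$. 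These are the same thing on the level of stacks (your $Z_\gamma$ is a section of the $\GL_n$-bundle $\mathbf M_\gamma^\heartsuit \to \mathbf M_\gamma^\heartsuit/\GL_n$), but your justification for the crucial identification of vanishing-cycle complexes is incorrect, and that is where the genuine content of the construction lies.

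The specific error: you claim $\iota^*\varphi_{\Tr W}\simeq \varphi_{\Tr\widehat W}$ because $Z_\gamma$ is a section of the smooth projection $\Rep(Q,\gamma)\to Z_\gamma$ (forget $X_{a_0}$). Vanishing cycles commute with \emph{smooth} pullback, but this only helps if $\Tr W$ factors through that projection, i.e., if $\Tr W$ is independent of $X_{a_0}$ — which it is not whenever $W$ contains cycles through $a_0$. Likewise $\Tr W$ is not of the Thom--Sebastiani form $\Tr\widehat W\boxplus g(X_{a_0})$, since terms like $\Tr(X_{a_0} M_c)$ mix $X_{a_0}$ with the other matrices, so invoking ``the standard Thom--Sebastiani formalism'' to absorb the codimension does not work either. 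The correct argument is the one the paper's $\epsilon_*\iota^*$ encapsulates: first pull back along the \emph{open} inclusion $\mathbf M_\gamma^\heartsuit\hookrightarrow\mathbf M_\gamma$ (open immersions are smooth, so \eqref{PB} applies), and then observe that the $\GL_{\gamma^{i_-}}$-quotient $\mathbf M_\gamma^\heartsuit\to Z_\gamma$ is a smooth morphism along which the $\mathbf G_\gamma$-invariant function $\Tr W$ \emph{does} factor; the restriction to the section $Z_\gamma$ of this particular fibration then gives the desired identification. As written, your proof skips the open restriction and uses the wrong smooth retraction.

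A secondary point: the algebra-compatibility is less routine than you suggest. When intersecting the multiplication correspondence with the heart locus and descending, the paper finds that the maps analogous to $\alpha,\beta$ are not simply base-changed; they pick up inverse Euler-class factors $\mathfrak{eu}(u_1)^{-1}$ and $\mathfrak{eu}(u_2)^{-1}$ coming from the affine fibrations $u_i$ with fibre $\mathbb C^{\gamma_2^{i_+}\gamma_1^{i_-}}$ (roughly, the unipotent radical of $\GL_{\gamma_1^{i_-},\gamma_2^{i_-}}$). These two factors cancel in $\delta\zeta\beta^{-1}\alpha$, but this cancellation is a computation, not a formality, and ``remaining signs, shifts, and equivariant twists are routine'' understates what must be checked. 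Your discussion of the flag condition being automatic on $Z_\gamma$ is correct and useful, but on its own it does not dispose of the Euler-class bookkeeping.
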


Furthermore, under a mild assumption, the homomorphism extends to a morphism on the Drinfeld doubles of COHAs. It's also compatible with mutations of quivers with potentials. In the specific scenario involving the COHAs of preprojective algebras, a similar homomorphism is established through dimensional reduction.

When the quiver $Q$ is Dynkin, there exists an algebra homomorphism from the Hall algebra of $Q$ to $\mathcal H_Q$ introduced in \cite{Bu}. It is anticipated that the homomorphism $\mathfrak c$ is related to the embeddings of Hall algebras in \cite{Li1}.

Quivers with potentials are in correspondence with 3-dimensional Calabi-Yau (3CY for short) categories, providing a suitable framework for the study of motivic Donaldson-Thomas (DT for short) invariants. The COHA serves as an approach to this theory. We further show that

\begin{thm*}
The edge contraction induces an embedding of the scattering diagrams and a map between the associated DT series.
\end{thm*}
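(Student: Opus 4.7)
The plan is to derive both statements from the COHA homomorphism $\mathfrak{c}: \mathcal{H}_{Q,W}^= \to \mathcal{H}_{\widehat{Q}, \widehat{W}}$ furnished by the preceding theorem, since both scattering diagrams and DT series are built directly out of the graded pieces of the COHA and the Hopf structure. The first step is to pin down the underlying lattice map: the dimension lattice of $Q$ is $N_Q = \mathbb{Z}^I$, the sublattice grading $\mathcal{H}_{Q,W}^=$ is $N_Q^= = \{\gamma : \gamma^{i_+} = \gamma^{i_-}\}$, and edge contraction yields a canonical lattice isomorphism $\pi: N_Q^= \xrightarrow{\sim} N_{\widehat{Q}}$ obtained by identifying the $i_\pm$-coordinates. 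Enumerating the arrows that are destroyed, merged, or fixed by edge contraction produces a comparison $\chi_{\widehat{Q}}(\pi\gamma, \pi\gamma') = \chi_Q(\gamma, \gamma') + c(\gamma, \gamma')$ for an explicit bilinear correction $c$ arising from $a_0$; this correction determines the twist between the two quantum tori in which the DT series live.

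Next I would transport scattering diagrams across $\mathfrak{c}$. Each primitive ray $\ell$ carries a wall function determined by the Poincar\'e series of the slope-$\mu$ component of the COHA, or equivalently by the BPS invariants at that slope. The key claim is that $\mathfrak{c}$ sends the slope-$\mu$ piece of $\mathcal{H}_{Q,W}^=$ into the slope-$\mu$ piece of $\mathcal{H}_{\widehat{Q}, \widehat{W}}$; this should follow from compatibility of the Harder--Narasimhan stratification with the restriction-of-representations construction underlying $\mathfrak{c}$, together with the Euler-form comparison above. Because $\mathfrak{c}$ is a Hopf morphism, the induced transformation of wall data automatically respects the wall-crossing identities $\prod_\ell f_\ell = \prod_{\ell'} f_{\ell'}$, which packages as the claimed embedding of the balanced scattering diagram of $(Q,W)$ into the scattering diagram of $(\widehat{Q}, \widehat{W})$.

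Finally, applying the Kontsevich--Soibelman factorization formula along a generic ray converts the embedding of walls into the asserted map of DT series. The main obstacle will be the slope-compatibility step: the restriction to balanced dimension vectors must interact well with the Harder--Narasimhan decomposition, which is not automatic and is where the geometry of the dimensional reduction underlying $\mathfrak{c}$ must be used. A secondary but unavoidable bookkeeping difficulty is tracking the cohomological Tate shifts generated by the Euler-form discrepancy $c(\gamma, \gamma')$ and verifying that they assemble into precisely the twist required by the quantum torus of $(\widehat{Q}, \widehat{W})$; once these two points are settled, the DT series identity becomes a formal consequence of the Hopf-algebra compatibility of $\mathfrak{c}$ established earlier.
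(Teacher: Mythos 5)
Your overall instinct to derive both claims from the COHA homomorphism $\mathfrak c$ is in the right spirit, but there are two problems, one of direction and one of substance, and the second is exactly the part the paper's proof is about.

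\emph{Direction of the embedding.} You conclude with an embedding of the (balanced) scattering diagram of $(Q,W)$ into that of $(\widehat Q,\widehat W)$. This is backwards. Scattering diagrams live in the dual space $M_{\mathbb R}=\Hom(\Zn^I,\mathbb R)$ of stability parameters, so the diagram for $\widehat Q$ lives in a space of dimension $|I|-1$ and the diagram for $Q$ in a space of dimension $|I|$. Since $\mathfrak c$ goes from (a sub-lattice of) $\Zn^I$ to $\Zn^{\hat I}$, dualizing gives a map $\widehat M_{\mathbb R}\to M_{\mathbb R}$, and it is $\widehat{\mathfrak D}$ that gets embedded into $\mathfrak D$. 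The paper constructs such an embedding $\eta:\hat\kappa\mapsto\kappa$, with $\kappa_i=\hat\kappa_i$ for $i\neq i_\pm$ and $\kappa_++\kappa_-=\hat\kappa_0$, $\kappa_-=k\kappa_+$ for a suitable scalar $k$. There is no canonical cone-complex in $M_{\mathbb R}$ that you could call the ``balanced scattering diagram of $Q$'' to be embedded into $\widehat M_{\mathbb R}$.

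\emph{The substantive gap.} You flag ``slope compatibility'' as the main obstacle, say it is ``not automatic,'' and defer it to ``the geometry of the dimensional reduction underlying $\mathfrak c$'' --- but this is precisely the content of the theorem, and nothing in your outline actually supplies it. The real issue is concrete: if $V$ is a $Q$-representation whose $\widehat Q$-shadow $\widehat V$ is $\hat\kappa$-semistable, then $V$ has subrepresentations $U$ with $\dim U_+<\dim U_-$ (such $U$ are invisible to $\widehat Q$), and these can destabilize $V$. The paper handles this by a hands-on argument with King stability: from $U$ one constructs a balanced subrepresentation $R$ by cutting out a complement of $a_0(U_+)$ inside $U_-$ and everything mapping into it, and then one shows that by choosing $\kappa_-$ sufficiently small (equivalently choosing the slope $k$ appropriately) one gets $\kappa(U)\leq\kappa^0(U)=\hat\kappa(\widehat R)\leq 0$. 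Finiteness of the set of subrepresentations and of walls in $\widehat{\mathfrak D}$, plus the hypotheses that $a_0$ is the unique $i_+\to i_-$ arrow and $i_-$ carries no loops, are what make a uniform $k$ possible. None of this falls out of the Hopf-algebra compatibility of $\mathfrak c$ or an Euler-form comparison; the Hopf structure only enters afterward, in checking that wall-crossing automorphisms agree once the walls have been matched (and even there the paper's prescription is to project out the contributions of representations with $\dim V_+\neq\dim V_-$, which your outline does not mention). As it stands, your proposal identifies the shape of the problem but does not prove the theorem.
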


The preprojective algebras gives rise to 2CY categories, which has some interesting invariants, such as Kac Polynomials. It is interesting to relate these invariants under edge contractions.

The relation between COHAs and Yangians is investigated by many authors, see, e.g., \cite{DM,RSYZ,RSYZ1}. The behavior of Yangians under edge contraction is another interesting topic. See e.g. \cite{U}.

The outline of the paper is as follows. In Section 2 we recall the definition of COHA, construction of the Drinfeld double, DT invariants and scattering diagram, dimensional reduction. Then we prove the compatibility of edge contraction with these structures in Section 3.
\\\\
{\it Acknowledgements:}
We would like to thank Ben Davison, Maxim Kontsevich, Yan Soibelman, Yaping Yang and Gufang Zhao for useful communications. J. Ren thanks to IHES for the hospitality.

\section{Cohomological Hall algebra}

We start by recalling the definition of the critical COHA proposed in \cite{KoSo3}. For the convenience of the reader we will closely follow the very detailed exposition from \cite{D1} and \cite{DM} which contains proofs of several statements sketched in \cite{KoSo3} as well as several useful improvements of the loc.cit.

\subsection{Reminder on the equivariant critical cohomology}\label{Reminder}

The definition of the critical COHA in the next section will use the equivariant critical cohomology. We will give an outline of the relevant definitions and properties here, and for more details see \cite{D1}, \cite{DM} and \cite{KoSo3}.

For a complex variety $X$ let $\mathscr{D}_c(X,\QQ)$ denote the category of constructible sheaves on $X$. The category $\mathscr{D}_c(\pt,\QQ)$ contains $\T=\QQ[-2]$ and its square root $\T^{1/2}$. For $\mathcal{F}\in\mathscr D^b_c(X,\QQ)$ and $\mathcal{G}\in\mathscr{D}^b_c(\pt,\QQ)$ we use the abbreviation $\mathcal{F}\otimes\mathcal{G}=\mathcal{F}\otimes(X\rightarrow\pt)^*\mathcal{G}$, and let $\T_X^{d/2}=\QQ_X\otimes\T^{d/2}$ for $d\in\Zn$.

For a smooth equidimensioanl variety $X$ let $\mathcal{IC}_X(\QQ)=\T_X^{-\dim X/2}$. If $X$ is irreducible but not smooth then we define the intersection complex $\mathcal{IC}_X(\QQ)=\T^{-\dim X_{reg}/2}\otimes\widetilde{\mathcal{IC}}_X(\QQ_{X_{reg}})$, where $\widetilde{\mathcal{IC}}_X(\QQ_{X_{reg}})$ is the usual intersection cohomology mixed Hodge module complex of $X$ given by intermediate extension of $\QQ_{X_{reg}}$. Namely, $\widetilde{\mathcal{IC}}_X(\QQ_{X_{reg}})=i_{!*}\QQ_{X_{reg}}=Image(i_!\QQ_{X_{reg}}\rightarrow i_*\QQ_{X_{reg}})$ for the inclusion $i:X_{reg}\rightarrow X$. If $X$ is a finite disjoint union of irreducible varieties then let $\mathcal{IC}_X(\QQ)=\oplus_{Z\in\pi_0(X)}\mathcal{IC}_Z(\QQ)$.

Let $X$ be a complex manifold, and $f:X\rightarrow\mathbb{C}$ a holomorphic function. We define the vanishing cycle functor $\varphi_f$ by $\varphi_{f}\mathcal{F}[-1]:=(R\Gamma_{\{Re(f)\leq0\}}\mathcal{F})_{f^{-1}(0)}$. For any submanifold $X^{sp}\subset X$, the {\it critical cohomology with compact support} $\coh_c^{\bullet,crit}(X^{sp},f)$ is defined to be $\coh_c^\bullet(X^{sp},\varphi_f\QQ_X)$.

From now on we will abbreviate $R\mathscr{F}$ to $\mathscr{F}$ for any functor $\mathscr{F}$.

If $j:X'\rightarrow X$ is a closed embedding, then
\begin{equation}\label{PF}
\varphi_fj_*\rightarrow j_*\varphi_{fj}
\end{equation}
is a natural isomorphism of functors.

For any smooth $j$ the natural transformation
\begin{equation}\label{PB}
j^*\varphi_f\rightarrow\varphi_{fj}j^*
\end{equation}
is an isomorphism.

\begin{rem}
Indeed the critical COHA is a $\mathbb{Z}_{\geqslant0}^I$-graded algebra in the category $\mathscr{D}(\textbf{MMHS})$, where $\textbf{MMHS}$ denotes the category of monodromic mixed Hodge structures. More generally we have the category of monodromic mixed Hodge modules $\textbf{MMHM}(X)$ on $X$ and $\textbf{MMHS}\cong\textbf{MMHM}(\pt)$. There is an exact functor $\mathscr D(\textbf{MMHM}(X))\rightarrow\mathscr D_c(X,\QQ)$ and the restricted functor $\textbf{MMHM}(X)\rightarrow Perv(X)$ is faithful, where $Perv(X)$ is the category of perverse sheaves on $X$. Interested readers please refer to \cite{D1}, \cite{DM} and \cite{KoSo3} for details.
\end{rem}

Next let's define the equivariant critical cohomology with compact support. Assume that $X$ is a smooth algebraic manifold carrying a $G$ action, where $G$ is a linear algebraic group. We also assume that $G$ is special, i.e., all étale locally trivial principal $G$-bundles are Zariski locally trivial. Such $G$ is connected (see \cite{Jo1}, \cite{C}). Let $V_1\subset V_2\subset\cdot\cdot\cdot$ be a chain of $G$-representations, and $U_1\subset U_2\subset\cdot\cdot\cdot$ be an ascending chain of subvarieties of the underlying vector spaces of $V_k$, considered as $G$-equivariant algebraic varieties. We assume that $G$ acts scheme-theoretically freely on each $U_k$, that each principal bundle quotient $U_k\rightarrow U_k/G$ exists in the category of schemes, and that $\lim\limits_{k\rightarrow\infty}\codim_{V_k}(V_k\setminus U_k)=\infty$. The map $X\times U_k\rightarrow X\times_GU_k$ exists as a principal bundle quotient in the category of schemes by \cite{EG}. Consider a $G$-invariant morphism $f:X\rightarrow\mathbb{A}^{1}$ and denote $f_k:X\times_GU_k\rightarrow\mathbb{A}^1$ the induced map.

We fix an embedding $G\subset\GL_n(\mathbb{C})$ for some $n$, and let $V_k=\Hom(\mathbb{C}^k,\mathbb{C}^n)$ and $U_k=\Surj(\mathbb{C}^k,\mathbb{C}^n)$ where $\Surj(\mathbb{C}^{k}, \mathbb{C}^{n})$ means the set of the surjective maps from $\mathbb{C}^{k}$ to $\mathbb{C}^{n}$. If $k\geqslant n$ then $U_k$ carries a free $G$-action via the $\GL_n(\mathbb{C})$-action on $\mathbb{C}^n$. Let $v:X^{sp}\rightarrow X$ be an inclusion of submanifold, and $v_k:X^{sp}\times_GU_k\rightarrow X\times_GU_k$ the induced map. Then the {\it equivariant critical cohomology with compact support} on $X^{sp}$ is defined as $$\coh_{c,G}^{\bullet,crit}(X^{sp},f)=\lim\limits_{k\rightarrow\infty}\coh_c^\bullet(X^{sp}\times_GU_k,v_k^*\varphi_{f_k}\T_{X_k}^{-\dim U_k}).$$
It's endowed with a $\coh_G(\pt,\QQ)$-module and a $\coh_G(X,\QQ)$-module structure.

Let $X_1$ and $X_2$ be complex algebraic manifolds acted on by special complex algebraic groups $G_1$ and $G_2$ respectively, then we have the following morphism in $\mathscr D_c^b((X_1\times_{G_1}U_k)\times(X_2\times_{G_2}U_k),\QQ)$: \begin{equation}\label{TSsheaves}
\begin{array}{ll} &R\Gamma_{\{Re(f_{1,k})\leqslant 0\}}\mathbb{Q}_{X_1\times_{G_1}U_k}\boxtimes R\Gamma_{\{Re(f_{2,N})\leqslant 0\}}\mathbb{Q}_{X_2\times_{G_2}U_k}\\\rightarrow &R\Gamma_{\{Re(f_{1,k}\boxplus f_{2,k})\leqslant 0\}}\mathbb{Q}_{(X_1\times_{G_1}U_k)\times(X_2\times_{G_2}U_k)} \end{array}
\end{equation}
which induces the following Thom-Sebastiani isomorphism:
\begin{equation}
\coh_{c,G_1}^{\bullet,crit}((X_1)_0,f_1)\otimes\coh_{c,G_2}^{\bullet,crit}((X_2)_0,f_2)\rightarrow\coh_{c,G_1\times G_2}^{\bullet,crit}((X_1)_0\times(X_2)_0,f_1\boxplus f_2),
\end{equation}
where $(X_i)_0=f_i^{-1}(0)$, and $f_1\boxplus f_2=\pi_1^*f_1+\pi_2^*f_2$ for projections $\pi_i:X_1\times X_2\rightarrow X_i$. Assume that the critical locus of $f$ is contained in $f^{-1}(0)$, then the Thom-Sebastiani isomorphism implies the natural isomorphism
\begin{equation}\label{TS}
\coh_{c,G_1}^{\bullet,crit}(X_1,f_1)\otimes\coh_{c,G_2}^{\bullet,crit}(X_2,f_2)\rightarrow\coh_{c,G_1\times G_2}^{\bullet,crit}(X_1\times X_2,f_1\boxplus f_2).
\end{equation}

Let $g:X\rightarrow Y$ be a $G$-equivariant morphism of complex algebraic manifolds, and $f$ be a $G$-invariant holomorphic function on $Y$, and $Y^{sp}\subset Y$ be a $G$-invariant submanifold and $X^{sp}=g^{-1}(Y^{sp})$. Then we can define the pullback map
\begin{equation}
g^*: \coh_{c,G}^{\bullet,crit}(Y^{sp},f)^\vee\rightarrow\coh_{c,G}^{\bullet,crit}(X^{sp},fg)^\vee\otimes\T^{\dim X-\dim Y}.
\end{equation}
If $g$ is an affine fibration, then there is a pushforward map
\begin{equation}
g_*: \coh_{c,G}^{\bullet,crit}(X^{sp},fg)^\vee\rightarrow\coh_{c,G}^{\bullet,crit}(Y^{sp},f)^\vee[\mathfrak{eu}(g)^{-1}],
\end{equation}
where $\mathfrak{eu}(g)\in\coh_G(X,\QQ)\cong\coh_G(Y,\QQ)$ is the Euler characteristic of $g$ defined in \cite[Sec.~2.7]{D1}.

For $g$ proper, we can define the pushforward map
\begin{equation}
g_*: \coh_{c,G}^{\bullet,crit}(X^{sp},fg)^\vee\rightarrow\coh_{c,G}^{\bullet,crit}(Y^{sp},f)^\vee.
\end{equation}
Please see \cite{D1} for details.

If there's no $G$-action then the above $\coh(X,\QQ)$-module structure, $\mathfrak{eu}(g)$, pullback and pushforward maps are also defined between the (non-equivariant) critical cohomologies with compact support.

\begin{thm}\label{base}{\rm(see \cite[Prop.~2.14]{D1})}
Let $$\begin{xy}
(0,20)*+{X}="v1";
(20,20)*+{X'}="v2";
(0,0)*+{Y}="v3";
(20,0)*+{Y'}="v4";
{\ar@{->}^{g} "v1";"v2"};
{\ar@{->}^{h} "v3";"v4"};{\ar@{->}^{j} "v1";"v3"}; {\ar@{->}^{j'} "v2";"v4"}
\end{xy}$$ be a Cartesian diagram of complex connected algebraic manifolds in which g and h are either proper, or affine fibrations. Assume there is a natural isomorphism $\nu:g^*j'^!\rightarrow j^!h^*$ such that the diagrams $$\begin{xy}
(0,20)*+{j^*h^*\{reldim(j)\}}="v1";
(30,20)*+{j^!h^*}="v2";
(30,0)*+{g^*j'^!}="v4";
{\ar@{->} "v1";"v2"};
{\ar@{->}^{\nu} "v4";"v2"}; {\ar@{->} "v1";"v4"}
\end{xy}$$ and $$\begin{xy}
(0,20)*+{h^*j'_!j'^!}="v1";
(30,20)*+{j_!j^!h^*}="v2";
(30,0)*+{h^*}="v4";
{\ar@{->} "v1";"v2"};
{\ar@{->} "v2";"v4"}; {\ar@{->} "v1";"v4"}
\end{xy}$$ commute, where reldim(j) is the relative dimension of j. Let $f$ be a holomorphic function on $Y'$. Let $Y'^{sp}\subset Y'$ be a subvariety, and define $Y^{sp}=h^{-1}(Y'^{sp})$, $X'^{sp}=j'^{-1}(Y'^{sp})$ and $X^{sp}=j^{-1}(Y^{sp})$. If $h$ and $g$ are proper then the following diagram commutes: $$\begin{xy}
(0,20)*+{\coh_c^{\bullet,crit}(X^{sp},fj'g)^\vee}="v1";
(50,20)*+{\coh_c^{\bullet,crit}(X'^{sp},fj')^\vee}="v2";
(0,0)*+{\coh_c^{\bullet,crit}(Y^{sp},fh)^\vee}="v3";
(50,0)*+{\coh_c^{\bullet,crit}(Y'^{sp},f)^\vee}="v4";
{\ar@{->}^{g_*} "v1";"v2"};
{\ar@{->}^{h_*} "v3";"v4"};{\ar@{->}^{j^*} "v3";"v1"}; {\ar@{->}^{j'^*} "v4";"v2"}
\end{xy}$$ If h and g are affine fibrations and we assume invertibility of the corresponding Euler classes, then the following diagram commutes: $$\begin{xy}
(0,20)*+{\coh_c^{\bullet,crit}(X^{sp},fj'g)^\vee}="v1";
(50,20)*+{\coh_c^{\bullet,crit}(X'^{sp},fj')^\vee[\mathfrak{eu}(g)^{-1}]}="v2";
(0,0)*+{\coh_c^{\bullet,crit}(Y^{sp},fh)^\vee}="v3";
(50,0)*+{\coh_c^{\bullet,crit}(Y'^{sp},f)^\vee[\mathfrak{eu}(h)^{-1}]}="v4";
{\ar@{->}^{g_*} "v1";"v2"};
{\ar@{->}^{h_*} "v3";"v4"};{\ar@{->}^{j^*} "v3";"v1"}; {\ar@{->}^{j'^*} "v4";"v2"}
\end{xy}$$
\end{thm}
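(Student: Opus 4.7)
The plan is to reduce the statement to the ordinary base change theorem for constructible sheaves on complex algebraic varieties, and then to transport the resulting isomorphism across the vanishing cycle functor using the compatibilities (\ref{PF}) and (\ref{PB}). First I would unwind the definition of $\coh_c^{\bullet,crit}(-,-)^\vee$: by Verdier duality this is naturally identified with hypercohomology of a complex of constructible sheaves obtained by applying $\varphi_f$ to $\mathbb{Q}$ on the ambient manifold (shifted appropriately) and then restricting to the support subvariety, so the maps $g_*, h_*$ become pushforwards along $g, h$ and $j^*, j'^*$ become pullbacks along $j, j'$ on the underlying constructible sheaves, up to dualization.

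Next I would apply the base change theorem in the Cartesian square: when $g, h$ are proper one has $h^* j'_* \cong j_* g^*$, and when $g, h$ are affine fibrations smooth base change gives $g^* j'^! \cong j^! h^*$. The hypothesized natural isomorphism $\nu: g^*j'^! \rightarrow j^! h^*$ together with the two commutative triangles in the assumption guarantee that this base change morphism is compatible with the $!$/$*$ interchanges, including the shift by $\mathrm{reldim}(j)$, which on critical cohomology manifests as either a Tate twist $\T^{\dim X - \dim Y}$ in the proper case or an Euler class factor $\mathfrak{eu}(g)^{-1}$ in the affine fibration case.

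Then I would invoke the vanishing cycle compatibilities: (\ref{PF}) commutes $\varphi_f$ past closed-type pushforwards and (\ref{PB}) commutes $\varphi_f$ past smooth pullbacks. Because the maps $g, h$ in the square are either proper or affine fibrations, both ingredients are available in the correct slots; combining base change with (\ref{PF}) and (\ref{PB}), and then dualizing, yields the two commutative squares for critical cohomology.

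The main obstacle will be the simultaneous bookkeeping of shifts, Tate twists, dualities, and Euler class denominators: the statement mixes $!$ and $*$ functoriality, so one must invoke both (\ref{PF}) and (\ref{PB}) and ensure they fit together consistently across the square. The hypothesized compatibility of $\nu$ with the two triangles is precisely what reconciles the $!$ and $*$ base-change morphisms produced this way; without it the two paths through the square could differ by a non-trivial automorphism. A secondary difficulty in the affine fibration case is verifying that inverting $\mathfrak{eu}(g)$ and $\mathfrak{eu}(h)$ is compatible with base change, which should follow from the second commutative diagram in the hypothesis.
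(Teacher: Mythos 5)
This theorem is stated in the paper as a citation to \cite[Prop.~2.14]{D1}; the paper gives no proof of its own, so your sketch can only be measured against the approach in Davison's paper. At a high level your plan --- unwind the dual of critical cohomology, apply sheaf-level base change, commute $\varphi_f$ through the pushforwards and pullbacks, and dualize --- does match the general strategy there, but two points need repair before the sketch could be completed.

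First, you invoke the compatibility (\ref{PF}) to move $\varphi_f$ past the pushforwards along $g$ and $h$. As stated in the paper, (\ref{PF}) is the natural isomorphism $\varphi_f j_* \to j_*\varphi_{fj}$ only for a \emph{closed embedding} $j$. In the theorem, however, $g$ and $h$ are general proper maps or affine fibrations, neither of which is a closed embedding. You need the stronger fact that $\varphi_f$ commutes with pushforward along an arbitrary proper morphism (a standard but separate input), and in the affine-fibration branch the pushforward is not a single sheaf-theoretic functor at all: it is defined via a base change against the zero section followed by localization at the Euler class, so its interaction with $\varphi_f$ is not covered by (\ref{PF}) or (\ref{PB}) alone.

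Second, you write that smooth base change \emph{gives} $g^*j'^! \cong j^!h^*$. The theorem explicitly \emph{assumes} $\nu$ as part of the hypotheses precisely because, in the generality of the statement (mixing upper-$!$ and upper-$*$ across a Cartesian square with $j$, $j'$ unrestricted), no such isomorphism is automatic. The two commutative triangles in the hypothesis fix the normalization of $\nu$ and its compatibility with the unit/counit of the $(j_!,j^!)$ adjunction; they are \emph{data to be verified} in each application, not consequences of base change. Your final remark that this compatibility ``should follow from the second commutative diagram in the hypothesis'' has the logical direction reversed. With these two points corrected, and with the observation that for a Cartesian square of manifolds with smooth horizontal maps one has $\dim X - \dim Y = \dim X' - \dim Y'$ so the Tate twists from the two pullbacks agree, the sketch is headed in the right direction.
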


\begin{thm}{\rm(\cite[Prop.~2.16]{D1})}
Let $g:X\rightarrow Y$ be a $G$-equivariant map of manifolds. Then
\begin{itemize}
\item[1)] If $g$ is a closed embedding, then $$g^*g_*:\coh_{c,G}^{\bullet,crit}(X^{sp},fg)^\vee\rightarrow\coh_{c,G}^{\bullet,crit}(X^{sp},fg)^\vee$$ is given by multiplication by Euler class of the normal bundle $\mathbf{N}_{X/Y}$.
\item[2)] If $g$ is an affine fibration and the Euler class $\mathfrak{eu}(g)$ is a non zero divisor in $\coh_G(X,\QQ)$, then the map $$g^*g_*:\coh_{c,G}^{\bullet,crit}(X^{sp},fg)^\vee\rightarrow\coh_{c,G}^{\bullet,crit}(X^{sp},fg)^\vee[\mathfrak{eu}(g)^{-1}]$$ is given by division by $\mathfrak{eu}(g)$.
\end{itemize}
\end{thm}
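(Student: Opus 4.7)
Both assertions are critical-cohomological refinements of classical intersection-theoretic identities: part 1 is the self-intersection formula and part 2 is the affine-fibration projection formula. The plan is to upgrade the classical arguments by keeping track of the vanishing cycle functor $\varphi_f$. The essential tools are the base-change theorem just proved, the natural isomorphisms \eqref{PF} and \eqref{PB} expressing compatibility of $\varphi_f$ with closed embeddings and smooth maps, the Thom--Sebastiani isomorphism \eqref{TS}, and the finite-dimensional approximations $X \times_G U_k$ which reduce equivariant statements to non-equivariant ones level by level.

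For part 1, I would pass to the deformation to the normal cone of $g$. This produces a flat family $\widetilde{Y} \to \AA^1$ with generic fibre $Y$ and special fibre $\mathbf{N}_{X/Y}$, together with a closed embedding $X \times \AA^1 \hookrightarrow \widetilde{Y}$ which restricts to $g$ at each $t\neq 0$ and to the zero section at $t=0$. Pulling $f$ back to $\widetilde{Y}$ gives a holomorphic function whose restriction to the special fibre is $f \circ \pi$ for the bundle projection $\pi\colon \mathbf{N}_{X/Y} \to X$. Two applications of the base-change theorem---one to the Cartesian square over a general $t$ and one to the square over $0$---identify $g^* g_*$ with its zero-section counterpart. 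For the zero section of a vector bundle the composition is computed directly: Zariski-local triviality (which is available because $G$ is special) combined with \eqref{TS} realises the push-pull as cup product with the equivariant Euler class of $\mathbf{N}_{X/Y}$, yielding the desired formula.

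For part 2 the same speciality assumption on $G$ ensures Zariski-local triviality of the affine fibration $g$. Base change along a trivialising cover reduces the statement to the product case $g = \mathrm{pr}_Y\colon Y \times \AA^k \to Y$ with $k = \dim X - \dim Y$. There $g_*$ is fibre integration and $g^*g_*$ is visibly division by the equivariant Euler class of the relative tangent bundle, which is precisely $\mathfrak{eu}(g)$. The hypothesis that $\mathfrak{eu}(g)$ is a non-zero divisor guarantees the localisation $[\mathfrak{eu}(g)^{-1}]$ is well defined, so no invertibility issues arise; compatibility with $\varphi_f$ is again supplied by \eqref{PB}.

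The main obstacle lies in the deformation step of part 1: one must verify that $\varphi_f$ commutes with specialisation along the one-parameter family $\widetilde{Y} \to \AA^1$, which requires a careful nearby/vanishing-cycle compatibility statement on the total space together with a judicious choice of extension of $f$ along the family. Once this is settled, the remainder reduces to standard Euler-class computations lifted from ordinary to $\varphi_f$-twisted cohomology, with the equivariant bookkeeping on the approximations $U_k$ being routine since $G$ is special.
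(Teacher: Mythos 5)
This result is cited from \cite[Prop.~2.16]{D1}; the paper under review does not reprove it, so there is no in-paper argument to compare against, only Davison's.

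For part~1 your plan is to deform $g$ to the zero section of $\mathbf N_{X/Y}$ and carry $\varphi_f$ along the family over $\AA^1$. That is precisely where the approach fails, and the issue does not follow from Theorem~\ref{base}: that base-change result compares two Cartesian squares whose vertical maps are proper or affine fibrations, whereas what you need is a rigidity statement identifying the push-pull over the fibre at $t\neq 0$ with the one over $t=0$, i.e.\ compatibility of $\varphi_f$ with specialisation along a one-parameter degeneration. You flag this gap yourself but do not close it, and it is not routine; vanishing cycles do not commute with arbitrary base change.

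The correct order of operations is the reverse, and this is where your proposal diverges from a workable argument. One first establishes the self-intersection formula at the sheaf level, for constant sheaves, before ever applying $\varphi_f$. For a closed embedding $g$ of smooth varieties of codimension $c$, the canonical composite $g^!\QQ_Y\to g^*\QQ_Y$, after the purity identification $g^!\QQ_Y\cong\QQ_X[-2c]$, is a morphism $\QQ_X[-2c]\to\QQ_X$ in $\mathscr D^b_c(X,\QQ)$, that is, a class in $\coh^{2c}(X,\QQ)$; the classical self-intersection formula identifies this class with $e(\mathbf N_{X/Y})$. Deformation to the normal cone may certainly be used to prove that, but that proof concerns constant sheaves only and never touches a vanishing-cycle functor, so no specialisation problem arises. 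Once this sheaf-level identity is in hand, one applies $\varphi_f$ and invokes the compatibility isomorphism (\ref{PF}) for the proper pushforward together with the purity and smooth-pullback compatibilities used in \cite[Sec.~2.7]{D1} (cf.~(\ref{PB})) to see that the critical-cohomology $g_*$ and $g^*$ are induced by $\varphi_f$ applied to the sheaf-level unit and counit; since $e(\mathbf N_{X/Y})\in\coh_G(X,\QQ)$ and critical cohomology is a $\coh_G(X,\QQ)$-module, multiplication by the Euler class transports across $\varphi_f$ with no further argument. Part~2 should be organised the same way: compute the sheaf-level statement for the trivial bundle first, then apply $\varphi_f$. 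Separately, the role you assign to speciality of $G$ is misattributed: speciality is invoked in Section~\ref{Reminder} so that the approximations $X\times_G U_k$ exist as schemes; it is not what makes the affine fibration $g$ Zariski-locally trivial, which is built into Davison's definition of affine fibration.
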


\subsection{COHA of a quiver with potential}\label{COHA}

Given a quiver with potential, or more generally a smooth algebra with potential, Kontsevich-Soibelman defined a cohomological Hall algebra in \cite{KoSo3}. Explicitly, for a quiver with potential $(Q,W)$, where $W\in\mathbb CQ/[\mathbb CQ,\mathbb CQ]$, we denote the set of vertices by $I$, the set of arrows by $\Omega$, and $a_{ij}\in\mathbb{Z}_{\geqslant0}$ the number of arrows from $i$ to $j$ for $i, j\in I$. The sources and targets of arrows are maps $s,t:\Omega\rightarrow I$. Fix a dimension vector $\gamma=(\gamma^i)_{i\in I}$, let $\mathbf{M}_\gamma$ be the space of $Q$-representations of dimension $\gamma$, which is acted on by $\mathbf{G}_\gamma:=\prod_{i\in I}\GL(\gamma^i,\Cp)$. For a fixed $Q$-representations $M\in\mathbf M_\gamma$, the linear map associated with an arrow $a$ is denoted by $M_a$. The potential $W$ defines a $\mathbf{G}_\gamma$-invariant function $\Tr W_\gamma$ on $\mathbf{M}_\gamma$, which is denoted by $W_\gamma$ for simplicity.

Let $\gamma_1$, $\gamma_2$ be a pair of dimensions such that $\gamma_1+\gamma_2=\gamma$. Denote $\mathbf{M}_{\gamma_1,\gamma_2}\subset\mathbf{M}_\gamma$ the subspace of $Q$-representations with $\gamma_1$ dimensional subrepresentations, and $W_{\gamma_1,\gamma_2}$ the restriction of $W_\gamma$ on $\mathbf M_{\gamma_1,\gamma_2}$. The group $\mathbf{G}_{\gamma_1,\gamma_2}\subset\mathbf{G}_\gamma$ consisting of transformations preserving subspaces $(\Cp^{\gamma^i_1}\subset\Cp^{\gamma^i})_{i\in I}$ acts on $\mathbf{M}_{\gamma_1,\gamma_2}$.

Suppose that we are given $\mathbf{G}_\gamma$-invariant subsets $\mathbf{M}_\gamma^{sp}\subset\mathbf{M}_\gamma$ for all $\gamma\in\mathbb{Z}_{\geqslant0}^I$ satisfying the following conditions:
\begin{itemize}
\item[$\bullet$]$\mathbf{M}_\gamma^{sp}\cap\Crit(\Tr W_\gamma)\subset(\Tr W_\gamma)^{-1}(0)$ where $\Crit(\Tr W_\gamma)$ is the critical locus of $\Tr W_\gamma$,
\item[$\bullet$]for any short exact sequence
$0\rightarrow E_{1}\rightarrow E\rightarrow E_{2}\rightarrow0$ of
representations of $Q$ with dimension vectors
$\gamma_{1}, \gamma:=\gamma_{1}+\gamma_{2}, \gamma_{2}$
respectively, $E\in\textbf{M}_\gamma^{sp}$ if and
only if $E_{1}\in\textbf{M}_{\gamma_{1}}^{sp}$, and
$E_{2}\in\textbf{M}_{\gamma_{2}}^{sp}$.
\end{itemize}

Let $\chi_{Q}(\gamma_{1},\gamma_{2})=-\sum\limits_{i,j\in
I}a_{ij}\gamma_{1}^{i}\gamma_{2}^{j}+\sum\limits_{i\in
I}\gamma_{1}^{i}\gamma_{2}^{i}$ be the Euler form on the $K_{0}$ group of the category of finite dimensional representations of $Q$. Its antisymmetrization is $\langle\gamma_1,\gamma_2\rangle=\chi_Q(\gamma_1,\gamma_2)-\chi_Q(\gamma_2,\gamma_1)$.

Fix a framing vector $\omega=(\omega^i)_{i\in I}$, we form a new quiver $Q_f=(I\sqcup\{\infty\}, \Omega\sqcup\{a_{i,l_i}: \infty\rightarrow i| i\in I, 1\leqslant l_i\leqslant\omega^i\})$. For a dimension vector $\gamma=(\gamma^i)_{i\in I}$, the space of the representations of the framed quiver with dimension $(\gamma, 1)$ is denoted by $\mathbf{M}_{\gamma,\omega}$, namely, $\mathbf{M}_{\gamma,\omega}\cong\mathbf{M}_\gamma\times\prod_{i\in I}\Hom(\mathbb{C}^{\omega^i}, \mathbb{C}^{\gamma^i})$. It contains an open subset $\mathbf{M}_{\gamma,\omega}^\circ=\mathbf{M}_\gamma\times\prod_{i\in I}\Surj(\mathbb{C}^{\omega^i}, \mathbb{C}^{\gamma^i})$. Here $\prod_{i\in I}\Hom(\mathbb{C}^{\omega^i}, \mathbb{C}^{\gamma^i})$ and $\prod_{i\in I}\Surj(\mathbb{C}^{\omega^i}, \mathbb{C}^{\gamma^i})$ play the respective roles of $V_k$ and $U_k$ in the definition of equivariant critical cohomology with compact support in section \ref{Reminder}. Furthermore $\mathbf{M}_{\gamma,\omega}^\circ$ contains an open subset $\mathbf{M}_{\gamma,\omega}^{\circ,\heartsuit}=\mathbf{M}_\gamma^\heartsuit\times\prod_{i\in I}\Surj(\mathbb{C}^{\omega^i}, \mathbb{C}^{\gamma^i})$, where $\mathbf{M}_\gamma^\heartsuit=\{\rho\in\mathbf{M}_\gamma|M_{a_0}\;is\;an\;isomorphism\}$ for a fixed arrow $a_0$. The group $\mathbf{G}_\gamma$ acts on $\mathbf{M}_{\gamma,\omega}^\circ$ and $\mathbf{M}_{\gamma,\omega}^{\circ,\heartsuit}$ freely, and the quotient spaces are denoted by $\mathcal{M}_{\gamma,\omega}^\circ$ and $\mathcal{M}_{\gamma,\omega}^{\circ,\heartsuit}$. We add the superscript $sp$ to the above notations if we consider the corresponding spaces replacing $\mathbf M_\gamma$ by $\mathbf{M}_\gamma^{sp}$.

Now let's consider the following maps: the affine fibrations
\begin{equation*}
p_{\gamma_1,\gamma_2,\omega_1,\omega_2}^\circ:\mathbf{M}_{\gamma_1,\gamma_2;\omega_1,\omega_2}^\circ/\mathbf{G}_{\gamma_1}\times\mathbf{G}_{\gamma_2}\rightarrow(\mathbf{M}_{\gamma_1,\omega_1}^\circ\times\mathbf{M}_{\gamma_2,\omega_2}^\circ)/\mathbf{G}_{\gamma_1}\times\mathbf{G}_{\gamma_2}
\end{equation*}
and
\begin{equation*}
q_{\gamma_1,\gamma_2,\omega_1,\omega_2}^\circ:\mathbf{M}_{\gamma_1,\gamma_2;\omega_1,\omega_2}^\circ/\mathbf{G}_{\gamma_1}\times\mathbf{G}_{\gamma_2}\rightarrow\mathbf{M}_{\gamma_1,\gamma_2;\omega_1,\omega_2}^\circ/\mathbf{G}_{\gamma_1,\gamma_2},
\end{equation*}
the inclusion
\begin{equation*}
j_{\gamma_1,\gamma_2,\omega_1,\omega_2}^\circ:\mathbf{M}_{\gamma_1,\gamma_2;\omega_1,\omega_2}^\circ/\mathbf{G}_{\gamma_1,\gamma_2}\rightarrow\mathbf{M}_{\gamma,\omega}^\circ/\mathbf{G}_{\gamma_1,\gamma_2},
\end{equation*}
and the proper map
\begin{equation*}
pr_{\gamma_1,\gamma_2,\omega_1,\omega_2}^\circ:\mathbf{M}_{\gamma,\omega}^\circ/\mathbf{G}_{\gamma_1,\gamma_2}\rightarrow\mathbf{M}_{\gamma,\omega}^\circ/\mathbf{G}_\gamma.
\end{equation*}
Let $\Hc_\gamma=\coh_{c,\mathbf{G}_\gamma}^{\bullet,crit}(\mathbf{M}_\gamma^{sp},W_\gamma)^\vee\otimes\T^{\dim\mathbf{M}_\gamma/\mathbf{G}_\gamma}$, where $\dim\mathbf M_\gamma/\mathbf G_\gamma=-\chi_Q(\gamma,\gamma)$, and $\Hc=\oplus_{\gamma\in\mathbb{Z}_{\geqslant0}^I}\Hc_\gamma$. Denote
$\alpha=p_{\gamma_1,\gamma_2,\omega_1,\omega_2}^{\circ,*}$, $\beta=q_{\gamma_1,\gamma_2,\omega_1,\omega_2}^{\circ,*}$, $\zeta=j_{\gamma_1,\gamma_2,\omega_1,\omega_2,*}^\circ$, $\delta=pr_{\gamma_1,\gamma_2,\omega_1,\omega_2,*}^\circ$, and
\begin{equation}
\begin{array}{ll}
m_{\gamma_1,\gamma_2}=\delta\zeta\beta^{-1}\alpha\TS:&\coh_{c,\mathbf{G}_{\gamma_1}}^{\bullet,crit}(\mathbf{M}_{\gamma_1}^{sp},W_{\gamma_1})^\vee\otimes\coh_{c,\mathbf{G}_{\gamma_2}}^{\bullet,crit}(\mathbf{M}_{\gamma_2}^{sp},W_{\gamma_2})^\vee\\&\rightarrow\coh_{c,\mathbf{G}_\gamma}^{\bullet,crit}(\mathbf{M}_\gamma^{sp},W_\gamma)^\vee\otimes\T^{-\chi_Q(\gamma_2,\gamma_1)},
\end{array}\nonumber
\end{equation}
where TS is the Thom-Sebastiani isomorphism. Define the multiplication $m=\sum_{\gamma_1,\gamma_2}m_{\gamma_1,\gamma_2}:\Hc\otimes\Hc\rightarrow\Hc$, then ($\Hc,m$) is the {\it critical cohomological Hall algebra} associated to $(Q,W)$. Denote $\mathcal{H}^s$ the spherical COHA, which is defined to be the subalgebra of $\Hc$ generated by $\mathcal{H}_{e_i}$ for all $i\in I$.

\begin{rem}
The critical COHA in \cite{D1,DM} is defined using a different twist: $\Hc_\gamma=\coh_{c,\mathbf{G}_\gamma}^{\bullet,crit}(\mathbf{M}_\gamma^{sp},W_\gamma)^\vee\otimes\T^{\frac{1}{2}\dim\mathbf{M}_\gamma/\mathbf{G}_\gamma}$, and
$$\bigoplus_{\gamma_1\in\mathbb{Z}_{\geqslant0}^I}\mathcal{F}_{\gamma_1}\boxtimes_+^{tw}\bigoplus_{\gamma_2\in\mathbb{Z}_{\geqslant0}^I}\mathcal{G}_{\gamma_2}:=\bigoplus_{\gamma\in\mathbb{Z}_{\geqslant0}^I}(\bigoplus_{\gamma_1+\gamma_2=\gamma}\mathcal{F}_{\gamma_1}\otimes\mathcal{G}_{\gamma_2})\otimes\T^{\chi_Q(\gamma_1,\gamma_2)/2-\chi_Q(\gamma_2,\gamma_1)/2},$$
where $\mathcal{F}_{\gamma_1}$ and $\mathcal{G}_{\gamma_2}$ are objects in $\mathscr{D}^{b}(\textbf{MMHS})$.
\end{rem}

There is a localised comultiplication on $\Hc$ (see \cite{D1} for details). Let $\overleftarrow{\alpha}=p_{\gamma_1,\gamma_2,\omega_1,\omega_2,*}^\circ$, $\overleftarrow{\beta}=q_{\gamma_1,\gamma_2,\omega_1,\omega_2,*}^\circ$,
$\overleftarrow{\zeta}=j_{\gamma_1,\gamma_2,\omega_1,\omega_2}^{\circ,*}$, and
$\overleftarrow{\delta}=pr_{\gamma_1,\gamma_2,\omega_1,\omega_2}^{\circ,*}$. Then the comultiplication is defined as $\Delta=\sum_{\gamma_1,\gamma_2}\Delta_{\gamma_1,\gamma_2}$, where
\begin{equation}
\begin{array}{ll}
 \Delta_{\gamma_1,\gamma_2}=\TS\overleftarrow{\alpha}\overleftarrow{\beta}\overleftarrow{\zeta}\overleftarrow{\delta}:&\coh_{c,\mathbf{G}_\gamma}^{\bullet,crit}(\mathbf{M}_\gamma^{sp},W_\gamma)^\vee\rightarrow\\&\coh_{c,\mathbf{G}_{\gamma_1}}^{\bullet,crit}(\mathbf{M}_{\gamma_1}^{sp},W_{\gamma_1})^\vee\otimes\coh_{c,\mathbf{G}_{\gamma_2}}^{\bullet,crit}(\mathbf{M}_{\gamma_2}^{sp},W_{\gamma_2})^\vee\otimes\T^{\chi_Q(\gamma_1,\gamma_2)}[L_{\gamma_1,\gamma_2}^{-1}],
\end{array}\nonumber
\end{equation}
here $L_{\gamma_1,\gamma_2}=\prod_{i,j\in I}\prod_{\alpha_1=1}^{\gamma_1^i}\prod_{\alpha_2=\gamma_1^j+1}^{\gamma_1^j+\gamma_2^j}(x_{j,\alpha_2}-x_{i,\alpha_1})\in\coh_{\mathbf{G}_{\gamma_1}}(\pt)\otimes\coh_{\mathbf{G}_{\gamma_2}}(\pt)$, and $\coh_{\mathbf G_\gamma}(\pt)\cong\QQ[(x_{i,\alpha})_{i\in I,\alpha\in\{1,\ldots,\gamma^i\}}]^{\Sym_\gamma}$ for $\Sym_\gamma=\prod_{i\in I}\Sym_{\gamma^i}$.

One can choose $\mathbf{M}_\gamma^{sp}$ to be the space of $Z$-semistable representations $\mathbf M_\gamma^{Z\text -ss}$ for a fixed central charge (a.k.a. stability function) $Z:\Zn^I\rightarrow\mathbb H_+:=\{z\in\Cp\mid\Im z>0\}\subset\Cp$, where an object $E$ is called semistable if any subobject $F\subset E$ satisfies $\Arg(F):=\Arg(Z(\dim F))\leqslant\Arg(E):=\Arg(Z(\dim E))$, where $\Arg(z)$ is the argument of a complex number $z$. Moreover, an object $E$ is stable if any proper subobject $F$ satisfies $\Arg(F)<\Arg(E)$.

\subsection{Assumption of shuffle description}

We assume that COHA has a shuffle description (see \cite{RSYZ}). For the proof of existence of shuffle description of K-theoretic Hall algebras see \cite{P}, and for COHA of preprojective algebras see \cite{YaZha1}), namely, there is a $\mathbb{Z}_{\geqslant0}^I$-graded algebra homomorphism $\Hc\rightarrow\mathbf{H}$ from the COHA to the shuffle algebra, which is an isomorphism after passing to the localization $-\otimes_{\QQ[\mathfrak{h}_\gamma]^{\Sym_\gamma}}\QQ(\mathfrak{h}_\gamma)^{\Sym_\gamma}$ for each $\gamma$. The induced algebra epimorphism $\Hc^s\rightarrow\mathbf{H}^s$ is an isomorphism after passing to the same localization. Here $\mathbf{H}$ is defined as follows. For $\gamma\in\mathbb{Z}_{\geqslant0}^I$, let $\g=\prod_{i\in I}\gl_{\gamma^i}$ be the Lie algebra of $\mathbf{G}_\gamma$, and $\h_\gamma\subset\g_\gamma$ be its Cartan subalgebra. The group $\Sym_\gamma=\prod_{i\in I}\Sym_{\gamma^i}$ acts on $\QQ[\h_\gamma]$, which is a polynomial ring in variables $x_{[1,\gamma]}\coloneqq(x_{i,\alpha})_{i\in I,\alpha\in\{1,\ldots,\gamma^i\}}$. Let $\mathbf{H}=\oplus_{\gamma\in\mathbb{Z}_{\geqslant0}^I}\QQ[\h_\gamma]^{\Sym_\gamma}$, and $\Sh(\gamma_1,\gamma_2)\subset\Sym_{\gamma_1,\gamma_2}$ be the set of shuffles of $(\gamma_1,\gamma_2)$ (namely, the elements preserving the relative order of $\{1,\ldots,\gamma_1^i\}$ and $\{\gamma_1^i+1,\ldots,\gamma_1^i+\gamma_2^i\}$). The multiplication on $\mathbf{H}$ is given by
\begin{equation}\label{shufflemult}
\begin{array}{ll}
\QQ[\h_{\gamma_1}]^{\Sym_{\gamma_1}}\otimes\QQ[\h_{\gamma_2}]^{\Sym_{\gamma_2}}\rightarrow\QQ[\h_{\gamma_1+\gamma_2}]^{\Sym_{\gamma_1+\gamma_2}},\\\\
f_1*f_2=\sum\limits_{\sigma\in\Sh(\gamma_1,\gamma_2)}\sigma(f_1(x_{[1,\gamma_1]})f_2(x_{[\gamma_1,\gamma_1+\gamma_2]})fac(x_{[1,\gamma_1]}|x_{[\gamma_1,\gamma_1+\gamma_2]}),
\end{array}
\end{equation}
where $fac(x_{[1,\gamma_1]}|x_{[\gamma_1,\gamma_1+\gamma_2]})=\frac{\prod_{i,j\in I}\prod_{\alpha_1=1}^{\gamma_1^i}\prod_{\alpha_2=\gamma_1^j+1}^{\gamma_1^j+\gamma_2^j}(x_{j,\alpha_2}-x_{i,\alpha_1})^{a_{ij}}}{\prod_{i\in I}\prod_{\alpha_1=1}^{\gamma_1^i}\prod_{\alpha_2=\gamma_1^i+1}^{\gamma_1^i+\gamma_2^i}(x_{i,\alpha_2}-x_{i,\alpha_1})}$.

Let $\mathbf{H}^s$ be the spherical shuffle algebra, which is the subalgebra of $\mathbf{H}$ generated by $\mathbf{H}_{e_i}$ for all $i\in I$.

\subsection{Drinfeld double}

To construct the Drinfeld double of COHA, we need the following Hpof structures from \cite{RSYZ}.

Let $H^0 := \Cp[\psi_{i,s}|i\in I, s\in\mathbb{N}]$, and $\psi_i(z)= 1+\sum_{s\geqslant 0}\psi_{i,s}z^{-s-1}\in H^0[\![z^{-1}]\!]$ the generating series of $\psi_{i,s}$'s. Then the extended shuffle algebra $\mathbf{H}^{\mathfrak{e}}=H^0\ltimes\mathbf{H}$ is defined via the $H^0$-action on $\mathbf{H}$ by $\psi_if\psi_i^{-1}=f\frac{fac(z|x_{[1,\gamma]})}{fac(x_{[1,\gamma]}|z)}$ for any $f\in\mathbf{H}_\gamma$. Let $(\mathbf{H}_{\gamma_1}^{\mathfrak{e}}\otimes\mathbf{H}_{\gamma_2}^{\mathfrak{e}})_{loc}$ be the localization away from the union of the null divisors of $fac(x_{[1,\gamma_1]}|x_{[\gamma_1+1,\gamma]})$ and $\psi_i(z)$, then a localized comultiplication $\Delta:\mathbf{H}^{\mathfrak{e}}_{\gamma}\rightarrow\sum_{\gamma_1+\gamma_2=\gamma}(\mathbf{H}^{\mathfrak{e}}_{\gamma_1}\otimes\mathbf{H}^{\mathfrak{e}}_{\gamma_2})_{loc}$ is defined as follows: $$\Delta(\psi_i(z))=\psi_i(z)\otimes\psi_i(z), i\in I,$$ $$\Delta(f(x_{[1,\gamma]}))=\sum_{\gamma_1+\gamma_2=\gamma}\frac{\psi_{[1,\gamma_1]}(x_{[\gamma_1+1,\gamma]})f(x_{[1,\gamma_1]}\otimes x_{[\gamma_1+1,\gamma]})}{fac(x_{[\gamma_1+1,\gamma]}|x_{[1,\gamma_1]})},f(x_{[1,\gamma]})\in\mathbf H^{\mathfrak k}_\gamma,$$
in particular,
$$\Delta(f(x_i)=\psi_i(x_i)\otimes f(x_i)+f(x_i)\otimes 1\in\mathbf{H}^{\mathfrak{e}}_{e_i}\otimes\mathbf{H}^{\mathfrak{e}}_{e_i}, f(x_i)\in\mathbf{H}^{\mathfrak{e}}_{e_i}.$$ Define the counit $$\varepsilon:\mathbf{H}^{\mathfrak{e}}\rightarrow\Cp, \quad\psi_i(z)\mapsto 1, \quad f(x_{[1,\gamma]})\mapsto 0,$$ and the antipode
\begin{equation}
\begin{array}{ll}
S:&\mathbf{H}^{\mathfrak{e}}_{loc}\rightarrow\mathbf{H}^{\mathfrak{e}}_{loc},\\&\psi_i(z)\mapsto\psi_i^{-1}(z),\\&f(x_{[1,\gamma]})\mapsto(-1)^{|\gamma|}\psi_{[1,\gamma]}^{-1}(x_{[1,\gamma]})f(x_{[1,\gamma]}).
\end{array}\nonumber
\end{equation}

Let $\mathbf{H}^{\mathfrak{e},coop}_{loc}$ denote the algebra equipped with oppsite comultiplication defined as follows: let $\phi_i(z)=(-1)^{r_i+1}\psi_i(z)$, where $r_i$ is the number of loops at $i\in I$, and the $H^0$-action on $\mathbf{H}$ is given by $\phi_i(z)g\phi_i^{-1}(z)=g\frac{fac(z|x_{[1,\gamma]})}{fac(x_{[1,\gamma]}|z)}$ for $g\in\mathbf{H}_\gamma$. The comultiplication is defined as
\begin{equation}
\begin{array}{ll}
\Delta^{op}:&\mathbf{H}^{\mathfrak{e},coop}_{loc}\rightarrow\mathbf{H}^{\mathfrak{e},coop}_{loc}\otimes\mathbf{H}^{\mathfrak{e},coop}_{loc},\\&\phi_i(z)\mapsto\phi_i(z)\otimes\phi_i(z),\\&g(x_{[1,\gamma]})\mapsto\sum_{\gamma_1+\gamma_2=\gamma}\frac{g(x_{[1,\gamma_1]}\otimes x_{[\gamma_1+1,\gamma]})\phi_{[\gamma_1+1,\gamma]}(x_{[1,\gamma_1]})}{fac(x_{[\gamma_1+1,\gamma]}|x_{[1,\gamma_1]})}.
\end{array}\nonumber
\end{equation}
The antipode is given by

\begin{equation}
\begin{array}{ll}
S^{op}:&\mathbf{H}^{\mathfrak{e},coop}_{loc}\rightarrow\mathbf{H}^{\mathfrak{e},coop}_{loc},\\&\phi_i(Z)\mapsto\phi_i^{-1}(z),\\&g(x_{[1,\gamma]})\mapsto(-1)^{|\gamma|}g(x_{[1,\gamma]})\phi_{[1,\gamma]}^{-1}(x_{[1,\gamma]}).
\end{array}\nonumber
\end{equation}

One can verity that $\mathbf{H}^{\mathfrak{e}}_{loc}$ and $\mathbf{H}^{\mathfrak{e},coop}_{loc}$ are Hopf algebras (see \cite[Sec.~2.1]{RSYZ}). Now construct a bilinear skew-Hopf pairing $(\cdot,\cdot):\mathbf{H}^{\mathfrak{e}}_{loc}\otimes\mathbf{H}^{\mathfrak{e},coop}_{loc}\rightarrow\mathbb C$ in the following way:
\begin{itemize}
\item[$\bullet$]$(f_{\gamma_1},g_{\gamma_2})=0$ if $\gamma_1\neq\gamma_2$, and $(f_\gamma,\phi_i(z))=0$, $(\psi_i(z),g_\gamma)=0$,
\item[$\bullet$]$(\psi_k(u),\phi_l(w))=\frac{fac(u|w)}{fac(w|u)}$, for any $u,w\in I$,
\item[$\bullet$]$(f_{e_i},g_{e_i})=Res_{x=\infty}f(x_i)g(-x_i)dx$, where $dx$ is a top form,
\end{itemize}
and extend via $(a,bb')=(\Delta(a),b\otimes b')$ and $(aa',b)=(a\otimes a',\Delta^{op}(b))$, which leads to $$(f,g)=Res_{x=\infty}\frac{f(x_A)g(-x_A)}{|A|!fac(x_A)}dx.$$

It is known that if $A$, $B$ are Hopf algebras endowed with a skew-Hopf paring $(\cdot,\cdot)$, then there is a unique Hopf structure on $A\otimes B$, called the Drinfeld double of $(A,B,(\cdot,\cdot))$, determined by the following properties (\cite{Jos}):

\begin{itemize}
\item[$\bullet$]$(a\otimes1)(a'\otimes1)=aa'\otimes1$,
\item[$\bullet$]$(1\otimes b)(1\otimes b')=1\otimes bb'$,
\item[$\bullet$]$(a\otimes1)(1\otimes b)=a\otimes b$,
\item[$\bullet$]$(1\otimes b)(a\otimes1)=\sum(a_1,S_B(b_1))a_2\otimes b_2(a_3,b_3)$,
\end{itemize}
where $a,a'\in A$, $b,b'\in B$, and $\Delta_A^2(a)=\sum a_1\otimes a_2\otimes a_3$ and $\Delta_B^2(b)=\sum b_1\otimes b_2\otimes b_3$.

\begin{defn}The Drinfeld double of the shuffle algebra $D(\mathbf{H})$ is the Drinfeld double of $(\mathbf{H}^{\mathfrak{e}}_{loc},\mathbf{H}^{\mathfrak{e},coop}_{loc},(\cdot,\cdot))$. The Drinfeld double of the spherical shuffle algebra $D(\mathbf{H}^s)$ is defined in a similar way. Then the Drinfeld double of the COHA $D(\Hc)$ is defined to be $D(\mathbf{H})$, and the Drinfeld double of the spherical COHA $D(\Hc^s)$ is defined as $D(\mathbf{H}^s)$.
\end{defn}

\subsection{Donaldson-Thomas invariants}

This section recalls the Donaldson-Thomas series and invariants defined using COHA, which is not a major part of this paper. The uninterested readers could skip this section safely. Given a central charge $Z$, any non-zero representation $E$ admits a canonical Harder-Narasimhan filtration, i.e., $0=E_0\subset E_1\subset\cdots\subset E_n=E$ such that all the quotients $F_k=E_k/E_{k-1}, i=1,\ldots,n$ are semistable and $\Arg(F_1)>\cdots>\Arg(F_n)$. Fix a sector $V\subset\mathbb H_+=\{z\in\Cp\;|\;\Im(z)>0\}$, denote by $\mathbf M_{V,\gamma}\subset\mathbf M_\gamma$ the set of all representations whose Harder-Narasimhan factors have dimensions in $Z^{-1}(V)$ and $\Hc_V=\oplus_\gamma\Hc_{V,\gamma}=\oplus_\gamma\coh_{c,\mathbf G_\gamma}^{\bullet,crit}(\mathbf M_{V,\gamma},W_\gamma)^\vee$. Let $\mathcal R_+$ be the associative algebra generated over $\widehat{K_0}$ by $e_\gamma,\gamma\in\Z_{\geqslant0}^I$ satisfying $e_{\gamma_1}e_{\gamma_2}=\mathbb L^{-\chi(\gamma_1,\gamma_2)}e_{\gamma_1+\gamma_2}, e_0=1$, where $\widehat{K_0}$ is the $K_0$ group of $\mathscr{D}(\textbf{MMHS})$ completed by pure motives with weights approaching to $+\infty$. Then the motivic DT-series is defined as $A_V:=\sum_{\gamma\in\Z_{\geqslant0}^I}[\Hc_{V,\gamma}]e_\gamma$.

\begin{defn}Let $\Lambda_\theta^Z:=\{\gamma\in\Nn^I\mid\gamma=0\:\text{or}\:\Arg(Z(\gamma))=\theta\}\subset\Nn^I$ for a fixed $\theta$. A central charge $Z$ is $\theta$-generic if $\gamma_1,\gamma_2\in\Lambda_\theta^Z$ implies $\langle\gamma_1,\gamma_2\rangle=0$. It's called generic if it's $\theta$-generic for all $\theta\in(0,\pi)$.
\end{defn}

Recall the King stability defined in \cite{K}:

\begin{defn}
Given $\kappa=(\kappa_i)_{i\in I}\in\mathbb R^I$, a representation E of Q is $\kappa$-semistable (resp., stable) if $\kappa(E)=0$, and every proper subobject $F\subset E$ satisfies $\kappa(F)\leqslant0$ (resp., $\kappa(F)<0$).
\end{defn}

This is equivalent to the following stability. If $\kappa\in\QQ^I$ we can fix $m\in\Nn^I$ such that $m\kappa\in\Zn^I$. We linearize the $\mathbf G_\gamma$-action on $\mathbf M_\gamma$ via the trivial line bundle on $\mathbf M_\gamma$ and the character $\chi_\gamma:\mathbf G_\gamma\rightarrow\Cp^*,\;(g_i)_{i\in I}\mapsto\prod_{i\in I}det(g_i)^{-m\kappa_i}$, and let $\mathbf M_\gamma^{\kappa\text{-}ss}$ be the variety of semistable points w.r.t. this linearization. Then the GIT quotient $\mathcal M_\gamma^{\kappa\text{-}ss}=\mathbf M_\gamma^{\kappa\text{-}ss}\sslash_{\chi_\gamma}\mathbf G_\gamma$ is a coarse moduli space. Similarly $\mathcal M_\gamma^{\kappa\text{-}ss,sp}=\mathbf M_\gamma^{\kappa\text{-}ss,sp}\sslash_{\chi_\gamma}\mathbf G_\gamma$, where $\mathbf M_\gamma^{\kappa\text{-}ss,sp}=\mathbf M_\gamma^{\kappa\text{-}ss}\cap\mathbf M_\gamma^{sp}$ .

By \cite[Lem.~4.21]{DMSS}, for a fixed $\theta$-generic central charge $Z$ and a dimension vector $\gamma\in\Lambda_\theta^Z$, there is a King stability parameter $\kappa_Z\in\QQ^I$ such that a $Q$-representation is $Z$-(semi)stable if and only if it's $\kappa_Z$-(semi)stable. Thus there is a map $p_\gamma^{Z,sp}: \mathbf M_\gamma^{Z\text{-}ss,sp}/\mathbf G_\gamma\rightarrow\mathcal M_\gamma^{Z\text{-}ss,sp}:=\mathcal M_\gamma^{\kappa_Z\text{-}ss,sp}$ taking a $Q$-representation to its semisimplification in the category of $Z$-semistable $Q$-representations, i.e., the associated polystable object.

\begin{defn}{\rm(\cite{DM})}
For a $\theta$-generic central charge $Z$ and $\gamma\in\Lambda_\theta^Z\setminus\{0\}$ define the following elements in $\mathbf{MMHM}(\mathcal M_\gamma^{Z\text{-}ss})$ and $\mathscr D^b(\mathbf{MMHS})$ respectively:
$$\mathcal{BPS}_{W,\gamma}^Z=
\begin{cases}
\varphi_W^{mon}\mathcal{IC}_{\mathcal M_\gamma^{Z\text{-}ss}}(\QQ),& \mathcal M_\gamma^{Z\text{-}st}\neq\emptyset,\\
0,& otherwise.
\end{cases}$$
$$BPS_{W,\gamma}^{Z,sp}=\Ho_c(\mathcal M_\gamma^{Z\text{-}ss,sp},\mathcal{BPS}_{W,\gamma}^Z)^\vee.$$
The refined Donaldson-Thomas invariants is defined as $$\Omega_{W,\gamma}^{Z,sp}(q^{-1/2})=\chi_q(BPS_{W,\gamma}^{Z,sp},q^{1/2}),$$ where $\chi_q$ is the weight polynomial $\chi_q(\mathcal L,q^{1/2})=\sum_{i,j\in\Zn}\dim(Gr_j^W(\Ho^i(\mathcal L)))q^{j/2}$ for $\mathcal L\in\mathscr D^b(\mathbf{MMHS})$.
\end{defn}


\subsection{Scattering diagram}

The scattering diagram (a.k.a. wall-crossing structure) was introduced in \cite{KoSo} and studied by many authors, e.g., \cite{Bou, B1, DMa, GHKK, M}. We follow \cite{B1} to briefly review this notion.

Let $N\cong\Nn^n$ be a lattice with a fixed basis $(e_i)_{i=1}^n$, and set $N^\oplus=\{\sum_{i=1}^na_ie_i\mid a_i\geqslant0\}$ and $N^+=N^\oplus\setminus\{0\}$. For the dual $M=\Hom_\Zn(N,\Zn)$ let $M_\mathbb{R}=M\otimes\mathbb{R}$, $M_\mathbb{R}^+=\{y\in M_\mathbb{R}\mid y(\gamma)>0,\forall \gamma\in N^+\}$, $M^+=M_\mathbb{R}^+\cap M$,$M^\oplus=M^+\cup\{0\}$ and $M_\mathbb{R}^-=-M_\mathbb{R}^+$. We fix an $N^+$-graded nilpotent Lie algebra $\mathfrak g=\oplus_{\gamma\in N^+}\mathfrak g_\gamma$, Then there is a corresponding algebraic group $G$ with a bijection $exp:\mathfrak g\rightarrow G$.

A cone in $M_\mathbb{R}$ is a subset of the form $\sigma=\{\sum_{k=1}^pa_ky_k\mid a_k\in\mathbb R_\geqslant0\}=\{y\in M_\mathbb{R}\mid y(\gamma_k)\geqslant0, k=1,\ldots,q\}$ for some $y_k\in M$ and $\gamma_k\in N$. The codinemsion of a cone is the codimension of the subspace of $M_\mathbb{R}$ it spans, and the codimension one cones are called walls and are denoted by $\mathfrak d$. A face of a cone $\sigma$ is a subset $\sigma\cap\gamma^\perp=\{y\in\sigma\mid y(\gamma)=0\}$ for a $\gamma\in N$ satsfying $y(\gamma)\geqslant0,\forall y\in\sigma$.

\begin{defn}
A cone complex in $M_\mathbb{R}$ is a finite collection $\mathfrak S$ of cones such that any face of a cone in $\mathfrak S$ is also a cone in $\mathfrak S$, and the intersection of any two cones in $\mathfrak S$ is a face of each. The support of a cone complex is the closed subset
$supp(\mathfrak S)=\cup_{\sigma\in\mathfrak{S}}\sigma\subset M_\mathbb{R}$.
\end{defn}

For a cone $\sigma$ let $\mathfrak g(\sigma)=\oplus_{\gamma\in N^+\cap\sigma^\perp}\mathfrak g_\gamma\subset\mathfrak g$, where $\sigma^\perp=\{\gamma\in N\mid y(\gamma)=0,\forall y\in\sigma\}$

\begin{defn}
A $\mathfrak g$-complex $\mathfrak D=(\mathfrak S,f)$ is a cone complex $\mathfrak S$ in $M_\mathbb{R}$ with a choice
$f_\mathfrak{d}\in\mathfrak g(\mathfrak{d})$ for each wall $\mathfrak d\in\mathfrak S$.

The essential support of $\mathfrak D=(\mathfrak S, f)$ is the subset $supp_{ess}(\mathfrak D)=\cup_{f_\mathfrak{d}\neq0}\mathfrak d\subset supp(\mathfrak S)$
\end{defn}

A smooth path $p:[0,1]\rightarrow M_\mathbb{R}$ is called $\mathfrak D$-generic if the endpoints $p(0)$ and $p(1)$ do not lie in $supp_{ess}(\mathfrak D)$, $p$ does not meet any cones of $\mathfrak S$ of codimension greater than one, and all intersections of $p$ with $\mathfrak d$ are transversal. Thus there exist $0<t_1<\cdots<t_k <1$ such that $p(t_i)\in supp_{ess}(\mathfrak D)$, and at each $t_i$ there is a unique wall $\mathfrak d_i\in\mathfrak S$ such that $p(t_i)\in\mathfrak d_i$. We denote $F_\mathfrak{D}(\mathfrak d)=exp(f_\mathfrak{d})$ and $F_\mathfrak{D}(p)=F_\mathfrak{D}(\mathfrak d_k)^{\epsilon_k}\cdots F_\mathfrak{D}(\mathfrak d_1)^{\epsilon_1}\in G$, where $\epsilon_i\in\{\pm1\}$ is the negative of the sign of the derivative of $p(t)(\gamma)$ at $t=t_i$.

\begin{defn}
A $\mathfrak g$-complex $\mathfrak D$ is called consistent if for any two $\mathfrak D$-generic paths $p_i$ with the same endpoints, we have $F_\mathfrak{D}(p_1)=F_\mathfrak{D}(p_2)$.

Two $\mathfrak g$-complex $\mathfrak D_1$ and $\mathfrak D_2$ are equivalent if $F_{\mathfrak D_1}(p)=F_{\mathfrak D_2}(p)$ for any path $p:[0,1]\rightarrow M_\mathbb{R}$ which is both $\mathfrak D_1$-generic and $\mathfrak D_2$-generic.
\end{defn}

Next let's remove the nilpotency assumption on $\mathfrak g$, and denote the ideal $\mathfrak g^{>k}=\oplus_{|\gamma|>k}\mathfrak g_\gamma$, where $|\gamma|=\sum_i\gamma^i$. Then $\mathfrak g_{\leqslant k}=\mathfrak g/\mathfrak g^{>k}$ is nilpotent, and the corresponding algebraic group $G_{\leqslant k}$ is unipotent. Denoting $\pi^{lk}:\mathfrak g_{\leqslant l}\rightarrow\mathfrak g_{\leqslant k}$, $\pi^{lk}:G_{\leqslant l}\rightarrow G_{\leqslant k}$, and \[\hat{\mathfrak g}=\lim_\leftarrow\mathfrak g_{\leqslant k},\;\hat G=\lim_\leftarrow G_{\leqslant k}\] the pro-nilpotent Lie algebra and the corresponding pro-unipotent algebraic group, we obtain a bijection $exp:\hat{\mathfrak g}\rightarrow\hat G$.

\begin{defn}
A scattering diagram (also called $\hat{\mathfrak g}$-complex) $\mathfrak D$ is a sequence of $\mathfrak g_{\leqslant k}$-complexes $\mathfrak D_k$ for $k\geqslant1$ such that for any $k<l$, the $\mathfrak g_{\geqslant k}$-complexes $\pi_*^{lk}(\mathfrak D_l):=(\mathfrak S,\pi^{lk}\circ f)$ and $\mathfrak D_k$ are equivalent. We say that $\mathfrak D$ is consistent if each $\mathfrak D_k$ is consistent. We say that $\mathfrak D$ is equivalent to another $\hat{\mathfrak g}$-complex $\mathfrak D'=(\mathfrak D_k')_{k\geqslant1}$ if $\mathfrak D_k$ and $\mathfrak D_k'$ are equivalent for all $k\geqslant1$.
\end{defn}

We denote $supp(\mathfrak D)=\cup_{k\geqslant1}supp(\mathfrak D_k)$ and $supp_{ess}(\mathfrak D)=\cup_{k\geqslant1}supp_{ess}(\mathfrak D_k)$.

We have the following

\begin{prop}{\rm(\cite{KoSo})}
There is a bijection between equivalence classes of consistent $\hat{\mathfrak g}$-complexes and elements of the group $\hat G$.
\end{prop}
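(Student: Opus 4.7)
The plan is to reduce to the case where $\mathfrak g$ is nilpotent and then construct mutually inverse maps between consistent $\mathfrak g$-complexes modulo equivalence and elements of $G$. By definition a scattering diagram is a compatible sequence $(\mathfrak D_k)_{k\geqslant1}$ of $\mathfrak g_{\leqslant k}$-complexes, and $\hat G=\lim_\leftarrow G_{\leqslant k}$, so it suffices to exhibit for each $k$ a bijection $\Phi_k:\{\text{consistent }\mathfrak g_{\leqslant k}\text{-complexes}\}/\sim\ \to G_{\leqslant k}$ compatible with the projections $\pi^{lk}$, and then pass to the inverse limit.

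For the nilpotent case, first I would define the map $\Phi$ from consistent complexes to $G$. Since the grading lies in $N^+$, both $M_\mathbb R^+$ and $M_\mathbb R^-$ are nonempty open convex cones, and $\mathfrak g(\sigma)=0$ for any cone $\sigma$ meeting the interior of $M_\mathbb R^\pm$. Pick $y_\pm\in M_\mathbb R^\pm$ and a $\mathfrak D$-generic path $p$ from $y_+$ to $y_-$, and set $\Phi(\mathfrak D)=F_\mathfrak D(p)$. Consistency shows this is independent of $p$: any two such paths are $\mathfrak D$-generically homotopic through paths meeting only walls, and consistency is precisely the statement that the path-ordered product is invariant under such homotopies. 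Equivalent diagrams give the same element by the very definition of equivalence.

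Next I would construct the inverse map $\Psi$ by induction on the nilpotency degree of $\mathfrak g$. The base case is trivial. For the inductive step, assume $g\in G$ projects to $\bar g\in G_{\leqslant k}$, and that we already have a consistent complex $\mathfrak D^{(k)}$ with $\Phi_k(\mathfrak D^{(k)})=\bar g$. Choosing any lift $\widetilde{\mathfrak D}$ of $\mathfrak D^{(k)}$ to $\mathfrak g_{\leqslant k+1}$, the discrepancy $\Phi(\widetilde{\mathfrak D})^{-1}g$ lies in $\exp(\mathfrak g_{k+1})$ because its reduction modulo $\mathfrak g^{>k}$ is trivial. For each loop in the chamber complex of $\mathfrak D^{(k)}$ avoiding codimension-two strata, the corresponding element is an element of $\mathfrak g_{k+1}$ and, using that $\mathfrak g_{k+1}$ is abelian in $\mathfrak g_{\leqslant k+1}/\mathfrak g^{>k+1}$, the resulting assignment is a cocycle on the dual graph of chambers. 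I would add new walls supported on the rays $\gamma^\perp\cap\sigma$ with attached elements $f_\mathfrak d\in\mathfrak g_{k+1}(\mathfrak d)$ chosen to trivialize this cocycle, producing the desired consistent refinement $\mathfrak D^{(k+1)}$.

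The hard part is the cocycle-trivialization in the inductive step: one must show that any $\mathfrak g_{k+1}$-valued discrepancy obtained from loops in $M_\mathbb R$ can in fact be written as a sum, ordered by slope, of local contributions of the form $f_\mathfrak d\in\mathfrak g(\mathfrak d)$ associated to walls $\mathfrak d=\gamma^\perp\cap\sigma$ with $\gamma\in N^+$. Equivalently, this amounts to a unique ordered factorization of any element of $\ker(G_{\leqslant k+1}\to G_{\leqslant k})$ as a product of one-parameter subgroups indexed by rays in $M_\mathbb R$, which is essentially a Baker–Campbell–Hausdorff-style ordering argument in the nilpotent abelian quotient $\mathfrak g_{k+1}$ combined with a topological check that the chamber complex of $\mathfrak D^{(k)}$ has no higher obstructions (its complement in $M_\mathbb R$ deformation retracts onto a space whose fundamental group is generated by loops around individual walls). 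Once this is established, $\Phi_k$ and $\Psi_k$ are mutually inverse, they intertwine the projections $\pi^{lk}$, and passing to the limit yields the claimed bijection between equivalence classes of consistent $\hat{\mathfrak g}$-complexes and $\hat G$.
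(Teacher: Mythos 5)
The paper itself gives no proof of this proposition; it is cited verbatim from Kontsevich--Soibelman \cite{KoSo}. So there is no in-paper argument to compare against, and your sketch has to be judged on its own merits. The strategy you describe---path-ordered products from $M_\mathbb R^+$ to $M_\mathbb R^-$ defining a map $\Phi$ to $\hat G$, and an order-by-order wall-insertion algorithm defining an inverse---is indeed the standard approach found in \cite{KoSo} and its successors (Gross--Siebert, Bridgeland, GHKK), so the outline is the right one.

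However there is a genuine gap: you establish (modulo the ``hard part'' you flag) that $\Phi_k\circ\Psi_k=\mathrm{id}$, which is surjectivity, but you merely assert rather than argue the other inclusion $\Psi_k\circ\Phi_k(\mathfrak D)\sim\mathfrak D$, i.e.\ that $\Phi_k$ is injective on equivalence classes. This is not automatic and is the half of the bijection that actually requires an argument: given two consistent $\mathfrak g_{\leqslant k+1}$-complexes $\mathfrak D_1,\mathfrak D_2$ which agree modulo $\mathfrak g^{>k}$ and satisfy $\Phi(\mathfrak D_1)=\Phi(\mathfrak D_2)$, one must show they are equivalent. The standard argument fixes $\gamma\in N^+$ with $|\gamma|=k+1$ and observes that, since $f_\mathfrak d\in\mathfrak g(\mathfrak d)$ forces the $\mathfrak g_\gamma$-component of $f_\mathfrak d$ to vanish unless $\mathfrak d\subset\gamma^\perp$, and since $\gamma^\perp$ separates $M_\mathbb R^+$ from $M_\mathbb R^-$ so that a generic path from $y_+$ to $y_-$ crosses exactly one wall in $\gamma^\perp$, one can read off the $\gamma$-component of the attached element of every wall in $\gamma^\perp$ from $\Phi$ by sliding the crossing point around. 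Centrality of $\mathfrak g_{k+1}$ in $\mathfrak g_{\leqslant k+1}$ guarantees that the BCH commutator corrections depend only on the degree-$\leqslant k$ part, which is common to $\mathfrak D_1,\mathfrak D_2$. Without this (or some substitute) the bijection claim is incomplete.

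Two smaller issues. First, the topological statement ``its complement in $M_\mathbb R$ deformation retracts onto a space whose fundamental group is generated by loops around individual walls'' should say loops around codimension-two joints, not walls; it is the joints whose removal changes $\pi_1$, and local consistency is precisely a statement about such loops. Second, the ``cocycle on the dual graph of chambers'' language hides the real content of the solvability step: for each joint $\mathfrak j$ the discrepancy lies in $\mathfrak g_{k+1}(\mathfrak j)$, and for each $\gamma\in N^+\cap\mathfrak j^\perp$ of degree $k+1$ one must place the $\gamma$-component on a wall $\mathfrak d\ni\mathfrak j$ with $\mathfrak d\subset\gamma^\perp$, then check these local choices are globally compatible as the added walls propagate across $\gamma^\perp$. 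Reducing to a one-dimensional problem in each hyperplane $\gamma^\perp$, exploiting centrality and the convexity of the degree-$\leqslant k$ chamber structure on $\gamma^\perp$, is what makes this work; this is worth saying explicitly rather than gesturing at a BCH ordering argument.
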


Now let $\mathfrak g_{COHA}=\Hc_{>0}=\oplus_{|\gamma|>0}\Hc_\gamma$ be the Lie algebra in the COHA $\Hc$ under the commutator bracket, then the completion $\hat{\mathfrak g}_{COHA}$ is a pro-nilpotent Lie lagebra and the corresponding pro-unipotent group is denoted by $\hat G_{COHA}$. Then we have the following COHA scattering diagram

\begin{prop}{\rm(\cite{B1})}
There is a consistent $\hat{\mathfrak g}_{COHA}$ scattering diagram $\mathfrak D_{COHA}$ such that
\begin{itemize}
\item[a)] the support consists of $\kappa\in M_{\mathbb R}$ for which there exist non-zero $\kappa$-semistable $Q$-representations,
\item[b)] the wall-crossing automorphism at a general point $\kappa\in\mathfrak{d}\subset supp(\mathfrak D_{COHA})$ is $F_{\mathfrak D}(\mathfrak d)=\Hc^{Z_\kappa\text{-}ss}_{re^{i\theta}}=\oplus_{\gamma\in\Lambda_\theta^{Z_\kappa}}\coh_{c,\mathbf{G}_\gamma}^{\bullet,crit}(\mathbf{M}_\gamma^{Z_\kappa\text{-}ss},W_\gamma)^\vee\otimes\T^{\dim\mathbf{M}_\gamma/\mathbf{G}_\gamma}$, where $Z_\kappa=-\kappa+\sqrt{-1}|\cdot|$.
\end{itemize}
\end{prop}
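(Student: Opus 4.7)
The plan is to construct $\mathfrak D_{COHA}$ wall-by-wall using King stability and to verify consistency by lifting the Harder--Narasimhan wall-crossing formula to the level of critical cohomology. By the bijection of the previous proposition, producing a consistent $\hat{\mathfrak g}_{COHA}$-complex amounts to producing a single element of $\hat G_{COHA}$, and the factorization along any generic path is then forced. At truncation level $k$ I would take $\mathfrak S_k$ to be the coarsest common refinement of the finite arrangement of rational cones $\{\kappa\in M_{\mathbb R}:\mathbf M_\gamma^{\kappa\text{-}ss}\neq\emptyset\}$ with $|\gamma|\leqslant k$. For each wall $\mathfrak d\in\mathfrak S_k$ and a generic $\kappa\in\mathfrak d$ with common phase $\theta$ of the relevant dimension vectors, I set
\[
F_{\mathfrak d} = \bigoplus_{\gamma\in\Lambda_\theta^{Z_\kappa},\,|\gamma|\leqslant k}\coh_{c,\mathbf G_\gamma}^{\bullet,crit}\bigl(\mathbf M_\gamma^{Z_\kappa\text{-}ss},W_\gamma\bigr)^\vee\otimes\T^{\dim\mathbf M_\gamma/\mathbf G_\gamma}\in G_{COHA,\leqslant k},
\]
and $f_{\mathfrak d}=\log F_{\mathfrak d}$. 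That this lies in $\mathfrak g_{COHA,\leqslant k}(\mathfrak d)$ follows because the category of $Z_\kappa$-semistable representations of fixed phase is closed under extensions, so the corresponding graded subspace of $\mathcal H$ is a subalgebra whose exponential is group-like. Compatibility with the projections $\pi^{lk}$ is automatic, so the $\mathfrak D_k$ assemble to a $\hat{\mathfrak g}_{COHA}$-complex $\mathfrak D_{COHA}$; part (a) is built into the construction and part (b) is the defining formula for $F_{\mathfrak d}$.

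\textbf{Consistency.} The content of the theorem is that $\mathfrak D_{COHA}$ is consistent. The key input is the cohomological Harder--Narasimhan factorization in $\hat G_{COHA}$: for any generic central charge $Z$ and any sector $V\subset\mathbb H_+$,
\[
\bigoplus_{\gamma:\,Z(\gamma)\in V}\mathcal H_\gamma \;=\; \prod_{\theta\in V}^{\curvearrowleft}\mathcal H^{Z\text{-}ss}_{re^{i\theta}},
\]
with the product ordered by decreasing phase. Granted this, consistency around each codimension-two face of $\mathfrak S$ reduces to comparing two factorizations of the same truncated COHA over overlapping sectors: the wall-crossings collected along a small generic loop around the face correspond to two orderings of the HN factors, which must therefore agree. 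The identity itself I would prove inductively in $|\gamma|$ by dualizing the HN stratification of $\mathbf M_\gamma$, applying the base-change Theorem 2.1 to the iterated HN flag fibrations, and using Thom--Sebastiani to identify the associated graded of the HN filtration on $\coh_{c,\mathbf G_\gamma}^{\bullet,crit}(\mathbf M_\gamma,W_\gamma)^\vee$ with the tensor product of the $Z$-semistable pieces; the $\T$-twist at each step matches $-\chi_Q(\gamma_2,\gamma_1)$ exactly as in the definition of $m_{\gamma_1,\gamma_2}$.

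\textbf{Main obstacle.} The hardest part is the cohomological HN factorization just displayed. What is required is not merely a filtered isomorphism whose associated graded is $\bigotimes_i\mathcal H^{Z\text{-}ss}_{re^{i\theta_i}}$, but a genuine group-level identity realizing this as a non-commutative product in $\hat G_{COHA}$. This forces one to iterate the COHA multiplication $m=\delta\zeta\beta^{-1}\alpha\circ\TS$ coherently over all HN strata, tracking both the twist factors $\T^{-\chi_Q(\gamma_2,\gamma_1)}$ and the Euler classes introduced by the affine fibrations $p^\circ$, via repeated applications of Theorem 2.1 and Thom--Sebastiani. Once that identity is secured, everything else is formal combinatorics on cone complexes: the existence and uniqueness of the $\mathfrak D_k$ in their equivalence classes, and their assembly to a $\hat{\mathfrak g}_{COHA}$-complex whose support and wall-crossing data are as in (a) and (b).
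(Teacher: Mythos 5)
The paper does not actually supply its own proof of this proposition: it is cited to Bridgeland's \cite{B1}, and the immediately following remark observes that Bridgeland worked with the motivic Hall algebra rather than the critical COHA ``but the proof is similar.'' Your proposal is a correct sketch of precisely that adaptation, and it follows the same route the paper implicitly endorses: assemble the cone complex at each truncation level from the stability chambers $\{\kappa:\mathbf M_\gamma^{\kappa\text{-}ss}\neq\emptyset\}$, define the wall-crossing data by the sub-COHA of $Z_\kappa$-semistable objects of the fixed phase, and derive consistency from the cohomological Harder--Narasimhan factorization in $\hat G_{COHA}$. You correctly isolate that factorization as the crux; in the COHA setting it is exactly the wall-crossing identity established by Kontsevich--Soibelman in \cite{KoSo3} (Bridgeland's argument uses the parallel factorization in the motivic Hall algebra). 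The reduction of global consistency to the comparison of two HN orderings around each codimension-two face is also the standard argument.

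Two minor remarks, neither a genuine gap and both shared with the paper's own (somewhat loose) notation. First, $F_{\mathfrak d}$ should literally be the pro-unipotent group element $1$ plus the positive-degree part of $\mathcal H^{Z_\kappa\text{-}ss}_{re^{i\theta}}$, since $\mathfrak g_{COHA}$ excludes $\gamma=0$; writing it as a bare direct sum, as you (and the paper) do, is an abuse. Second, your ``compatibility with $\pi^{lk}$ is automatic'' step is slightly hasty: the cone complexes $\mathfrak S_k$ and $\mathfrak S_l$ differ (the latter is a refinement), so what one really checks is equivalence of $\mathfrak g_{\leqslant k}$-complexes after refinement, which is exactly what the equivalence relation in the definition of scattering diagrams is designed to absorb. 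With those caveats noted, your approach matches the intended one.
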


\begin{rem}
Indeed Bridgeland used the motivic Hall algebra in \cite{B1} instead of COHA we use here, but the proof is similar.
\end{rem}

\subsection{Dimensional reduction}\label{DimRed}

We briefly remind the dimensional reduction from 3CY to 2CY, and that the multiplication of the 3-dimensional COHA, i.e., the COHA of a quiver with potential, induces the multiplication of the 2-dimensional COHA, i.e., the COHA of the preprojective algebra. See \cite{KoSo3,D1,RS} for details.

Fix a quiver $Q$ with the set of vertices $I$ and the set of arrows $\Omega$, one constructs the double quiver $\overline{Q}$, the preprojective algebra $\Pi_{Q}$, and the triple quiver with potential $(\widetilde{Q}, W)$ as follows. For any arrow $a:i\rightarrow j\in\Omega$, we add an inverse arrow $a^{*}:j\rightarrow i$ to $Q$ to get $\overline{Q}$, then $\Pi_{Q}=\mathbb{C}\overline{Q}/\sum_{a\in\Omega}[a, a^{*}]$. Adding loops $l_{i}: i\rightarrow i$ at each vertex $i\in I$ to $\overline{Q}$ gives us the ``triple" quiver $\widetilde{Q}$. It is endowed with the cubic potential $W=\sum_{a\in\Omega}[a, a^{*}]l$, where $l=\sum_{i\in I}l_{i}$. For any dimension vector
$\gamma=(\gamma^{i})_{i\in I}\in \mathbb{Z}^{I}_{\geqslant0}$ we have the following algebraic varieties:
\begin{itemize}
\item[a)] the space $\textbf{M}_{\overline{Q},\gamma}$ of representations of the double quiver $\overline{Q}$  in coordinate spaces $({\mathbb C}^{\gamma^i})_{i\in I}$ ;
\item[b)] the similar space of representations $\textbf{M}_{\Pi_{Q},\gamma}$ of $\Pi_{Q}$;
\item[c)] the similar space of representations $\textbf{M}_{\widetilde{Q},\gamma}$ of $\widetilde{Q}$.
\end{itemize}
All these spaces of representations are endowed with the
action by conjugation of the complex algebraic group
$\textbf{G}_{\gamma}=\prod_{i\in I}GL(\gamma^{i}, \mathbb{C})$.

We have $\textbf{M}_{\widetilde{Q},\gamma}=\textbf{M}_{\overline{Q},\gamma}\times\mathbb{A}^{\gamma\cdot\gamma}$ (dot denotes the inner product), and $f=Tr(W)_{\gamma}=\sum\limits_{i\in
I,k=1,\ldots,(\gamma^{i})^{2}}f_{ik}x_{ik}$, where $f_{ik}$ are functions on $\textbf{M}_{\overline{Q},\gamma}$, and $\{x_{ik}\}$ is a linear coordinate system on $\mathbb{A}^{\gamma\cdot\gamma}$. Then
$\textbf{M}_{\Pi_{Q},\gamma}$ is the reduced scheme which is the vanishing locus of all functions $f_{k}$. Denote by
$\textbf{M}_{\Pi_{Q},\gamma_{1},\gamma_{2}}$ the space of
representations of $\overline{Q}$ in coordinate spaces of dimension $\gamma_{1}+\gamma_{2}$ such that the standard coordinate subspaces of dimension $\gamma_{1}$ form a subrepresentation, and the restriction of $\rho\in\textbf{M}_{\Pi_{Q},\gamma_{1},\gamma_{2}}$
on the block-diagonal part is an element in
$\textbf{M}_{\Pi_{Q},\gamma_1}\times\textbf{M}_{\Pi_{Q},\gamma_2}$.
The group
$\textbf{G}_{\gamma_{1},\gamma_{2}}\subset\textbf{G}_{\gamma}$
consisting of transformations preserving subspaces
$(\mathbb{C}^{\gamma_{1}^{i}}\subset\mathbb{C}^{\gamma^{i}})_{i\in I}$ acts on $\textbf{M}_{\Pi_{Q},\gamma_{1},\gamma_{2}}$. Suppose that we are given a collection of $\textbf{G}_{\gamma}$-invariant closed subsets
$\textbf{M}_{\overline{Q},\gamma}^{sp}\subset\textbf{M}_{\overline{Q},\gamma}$ satisfying the following condition: for any short exact sequence $0\rightarrow E_{1}\rightarrow E\rightarrow E_{2}\rightarrow0$ of representations of $\overline{Q}$ with dimension vectors
$\gamma_{1}, \gamma:=\gamma_{1}+\gamma_{2}, \gamma_{2}$
respectively, $E\in\textbf{M}_{\overline{Q},\gamma}^{sp}$ if and only if $E_{1}\in\textbf{M}_{\overline{Q},\gamma_{1}}^{sp}$, and
$E_{2}\in\textbf{M}_{\overline{Q},\gamma_{2}}^{sp}$. Then
$\textbf{M}_{\widetilde{Q},\gamma}^{sp}=\textbf{M}_{\overline{Q},\gamma}^{sp}\times\mathbb{A}^{\gamma\cdot\gamma}$.

Then there is an isomorphism

$$\coh_{c,G}^{\bullet,crit}(\textbf{M}_{\widetilde{Q},\gamma}^{sp},W_\gamma)\simeq\coh_{c,G}^{\bullet}(\textbf{M}_{\Pi_{Q},\gamma}^{sp}\times\mathbb{A}^{n}, \mathbb{Q}).$$

\begin{thm}[\cite{RS}]
The COHA of $(\widetilde{Q},W)$ induces the associative
multiplication on $\mathcal{H}_{\Pi_Q}=\bigoplus\limits_{\gamma\in\mathbb{Z}^{I}_{\geqslant0}}\mathcal{H}_{\Pi_Q,\gamma}$, where $\mathcal{H}_{\Pi_Q,\gamma}:=\coh_{c,\textbf{G}_{\gamma}}^{\bullet}(\textbf{M}_{\Pi_{Q},\gamma}^{sp},\mathbb{Q})^{\vee}\otimes\mathbb{T}^{-\chi_{Q}(\gamma,\gamma)}$.

This multiplication preserves the modified cohomological degree, thus the zero degree part
$\mathcal{H}^0_{\Pi_Q}=\bigoplus\limits_{\gamma\in\mathbb{Z}^{I}_{\geqslant0}}\mathcal{H}_{\Pi_Q,\gamma}^0=\bigoplus\limits_{\gamma\in\mathbb{Z}^{I}_{\geqslant0}}\coh_{c,\textbf{G}_{\gamma}}^{-2\chi_{Q}(\gamma,\gamma)}(\textbf{M}_{\Pi_{Q},\gamma}^{sp},\mathbb{Q})^{\vee}\otimes\mathbb{T}^{-\chi_{Q}(\gamma,\gamma)}$
is a subalgebra of $\mathcal{H}_{\Pi_Q}$.
\end{thm}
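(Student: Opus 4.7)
The plan is to transport the 3CY COHA multiplication on $\mathcal{H}_{\widetilde Q,W}$ to the 2CY side via the dimensional reduction isomorphism displayed immediately before the theorem. The structural input is that $\mathbf{M}_{\widetilde Q,\gamma}=\mathbf{M}_{\overline Q,\gamma}\times\mathbb{A}^{\gamma\cdot\gamma}$ and the trace potential $W_\gamma=\sum_{i,k} f_{ik}x_{ik}$ is linear in the $\mathbb{A}^{\gamma\cdot\gamma}$ coordinates, with the common zero locus of the $f_{ik}$'s cutting out exactly $\mathbf{M}_{\Pi_Q,\gamma}$. The crucial bookkeeping is that the cohomological shift $\mathbb{T}^{-\chi_Q(\gamma,\gamma)}$ in the definition of $\mathcal{H}_{\Pi_Q,\gamma}$ is designed to absorb the Tate twist produced by dimensional reduction along the linear $\mathbb{A}^{\gamma\cdot\gamma}$-direction.

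First, I would verify that the Hall correspondence for $\widetilde Q$ respects the product decomposition: each space $\mathbf{M}_{\widetilde Q,\gamma_1,\gamma_2}$ factors as $\mathbf{M}_{\overline Q,\gamma_1,\gamma_2}\times\mathbb{A}^{\gamma\cdot\gamma}$, and each of the four structural maps $p_{\gamma_1,\gamma_2}^\circ$, $q_{\gamma_1,\gamma_2}^\circ$, $j_{\gamma_1,\gamma_2}^\circ$, $pr_{\gamma_1,\gamma_2}^\circ$ from Section 2.2 is the product of its $\overline Q$-analogue with either an identity, an affine fibration, or a closed inclusion of $\mathbb{A}$-factors. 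Second, I would apply Thom–Sebastiani in the form \eqref{TS} to split the vanishing cycle of $W_\gamma$ as an external tensor of the vanishing cycle of the $\overline Q$-part with the vanishing cycle of a linear function on the affine factor. Dimensional reduction for a linear function identifies the latter with a Tate-twisted shift of the constant sheaf on the zero locus of its coefficients, yielding the stated isomorphism $\coh^{\bullet,\mathrm{crit}}_{c,\mathbf G_\gamma}(\mathbf{M}^{sp}_{\widetilde Q,\gamma},W_\gamma)\simeq \coh^\bullet_{c,\mathbf G_\gamma}(\mathbf{M}^{sp}_{\Pi_Q,\gamma}\times\mathbb{A}^n,\mathbb{Q})$.

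Third, I would run the compatibility with pullback and pushforward. Since the maps $\alpha,\beta,\zeta,\delta$ built for $\widetilde Q$ are products of their $\overline Q$-counterparts with maps of affine factors, and since dimensional reduction for a linear function is natural with respect to affine fibrations and closed inclusions (via the base change Theorem~\ref{base}), each of the four operations entering the multiplication $m_{\gamma_1,\gamma_2}=\delta\zeta\beta^{-1}\alpha\mathrm{TS}$ descends to an operation on $\coh^\bullet_{c,\mathbf G_\gamma}(\mathbf{M}^{sp}_{\Pi_Q,\gamma},\mathbb{Q})^\vee$. Combining them produces a well-defined multiplication on $\mathcal{H}_{\Pi_Q}$; associativity is inherited from the associativity of the critical COHA on $(\widetilde Q,W)$.

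Fourth, for the degree statement, I would track the cohomological shifts through each step: Thom–Sebastiani is degree preserving, the vanishing-cycle-for-linear-functions step introduces a shift that is exactly compensated by the $\mathbb{T}^{-\chi_Q(\gamma,\gamma)}$ twist in the definition of $\mathcal{H}_{\Pi_Q,\gamma}$, and the affine-fibration pullbacks are degree-preserving modulo the normalizations already built into the COHA definition. This shows $m$ preserves the modified degree, so that the degree-zero piece $\mathcal{H}^0_{\Pi_Q}$ is closed under multiplication. The main obstacle is the second-to-third step: one must check that the naturality square relating the vanishing-cycle formalism on the $\widetilde Q$-side and the ordinary constructible formalism on the $\Pi_Q$-side commutes with the correspondence maps in the presence of the equivariant thickenings $\times_{\mathbf{G}_\gamma}U_k$, which is where the careful use of the base change isomorphisms $\nu:g^*j'^!\to j^!h^*$ of Theorem~\ref{base} becomes essential.
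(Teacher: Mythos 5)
This theorem is not proved in the present paper --- it is cited verbatim from \cite{RS} (with the substantive content supplied by Davison's appendix there), and the paper only quotes the dimensional-reduction isomorphism $\coh_{c,G}^{\bullet,\mathrm{crit}}(\mathbf{M}_{\widetilde Q,\gamma}^{sp},W_\gamma)\simeq\coh_{c,G}^{\bullet}(\mathbf{M}_{\Pi_Q,\gamma}^{sp}\times\mathbb{A}^{n},\mathbb{Q})$ as an input. So what you are writing is a reconstruction of the proof from \cite{RS}, not a comparison with an argument in this article.

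There is a genuine conceptual gap in your step two. You propose to ``apply Thom--Sebastiani in the form \eqref{TS} to split the vanishing cycle of $W_\gamma$ as an external tensor of the vanishing cycle of the $\overline Q$-part with the vanishing cycle of a linear function on the affine factor.'' This cannot work: Thom--Sebastiani applies only to functions of the shape $f_1\boxplus f_2=\pi_1^*f_1+\pi_2^*f_2$, i.e.\ honest external sums of functions on the two factors. The potential here has the coupled form $W_\gamma=\sum_{i,k}f_{ik}x_{ik}$ where the coefficients $f_{ik}$ live on $\mathbf{M}_{\overline Q,\gamma}$ and the $x_{ik}$ are coordinates on $\mathbb{A}^{\gamma\cdot\gamma}$; it does not decompose as a sum of a function pulled back from $\mathbf{M}_{\overline Q,\gamma}$ and a function pulled back from $\mathbb{A}^{\gamma\cdot\gamma}$. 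Consequently the vanishing-cycle complex $\varphi_{W_\gamma}\mathbb{Q}$ does not factor as an external tensor. The actual mechanism is the separate \emph{dimensional reduction} theorem for functions linear in a set of coordinates (stated in the paper as the displayed isomorphism preceding the theorem, and proved in Davison's appendix to \cite{RS} / in \cite{D1}); it computes $\pi_!\varphi_{W_\gamma}\mathbb{Q}$ along the affine projection in one stroke, producing directly a Tate twist of the constant sheaf on the common zero locus $\{f_{ik}=0\}=\mathbf{M}_{\Pi_Q,\gamma}$, rather than via any Thom--Sebastiani factorization. Your phrase ``dimensional reduction for a linear function'' is the right tool, but you are trying to bolt it onto a Thom--Sebastiani splitting that is not available, so the chain of identifications as written does not go through.

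A secondary weakness is the degree-preservation claim in step four. The assertion that the multiplication preserves the modified cohomological degree (hence that $\mathcal{H}^0_{\Pi_Q}$ is closed under the product) is not a formal consequence of ``shifts being compensated by twists''; in \cite{RS} it rests on an explicit dimension/perversity count for the correspondence $\mathbf{M}_{\Pi_Q,\gamma_1,\gamma_2}$ against $\mathbf{M}_{\Pi_Q,\gamma_1}\times\mathbf{M}_{\Pi_Q,\gamma_2}$ and $\mathbf{M}_{\Pi_Q,\gamma}$, using that $\chi_{\widetilde Q}(\gamma_1,\gamma_2)+\chi_{\widetilde Q}(\gamma_2,\gamma_1)=0$ together with the definition $\mathcal{H}_{\Pi_Q,\gamma}=\coh_{c,\mathbf G_\gamma}^{\bullet}(\mathbf{M}_{\Pi_Q,\gamma}^{sp},\mathbb{Q})^\vee\otimes\T^{-\chi_Q(\gamma,\gamma)}$. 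Without that computation, the sentence ``the affine-fibration pullbacks are degree-preserving modulo the normalizations already built into the COHA definition'' is an assertion, not a proof, and it is exactly the nontrivial content of the second half of the theorem.
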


We call $\mathcal{H}_{\Pi_Q}$ the {\it
Cohomological Hall algebra}  of the preprojective algebra $\Pi_{Q}$ associated with the quiver $Q$.

\section{Edge contraction}

We establish an algebra morphism among the COHA's induced by an edge contraction. We further show that this morphism is compatible with the bialgebra structures, the Drinfeld doubles, the DT theory and the dimensional reductions.

\subsection{Edge contraction of a quiver with potential}

Given a quiver with potential $(Q,W)$ and fix an arrow $a_0:i_+\rightarrow i_-$ for a distinct pair of vertices $i_+$ and $i_-$, we define a new quiver with potential $(\widehat{Q},\widehat{W})$, i.e., the edge contraction of $(Q,W)$, as follows. The set of vertices $\hat{I}=I\setminus\{i_-\}$, and the set of arrows 
\begin{equation}
\begin{array}{ll}
\widehat{\Omega}=&\Omega\cup\{\hat a:=aa_0:s(a_0)\rightarrow t(a)|s(a)=t(a_0)\}\\&\cup\{\hat a:=a_0^{-1}a:s(a)\rightarrow s(a_0)|t(a)=t(a_0)\}\\&\cup\{\hat a:=a_0^{-1}aa_0:s(a_0)\rightarrow s(a_0)|s(a)=t(a)=t(a_0)\}\\&\setminus\{a|s(a)=t(a_0)\:or\:t(a)=t(a_0)\},
\end{array}\nonumber
\end{equation}
where $a_0^{-1}$ is a formal symbol which gives $M_{a_0}^{-1}$ for a fixed $Q$-representation $(M_a)_{a\in\Omega}\in\mathbf{M}_\gamma^\heartsuit$, so $a_0^{-1}$ can be viewed as an arrow $t(a_0)\rightarrow s(a_0)$ obtained by reverseing $a_0$.
To get the potential $\widehat{W}$, one just replaces the arrows $a$ appear in $W$ by the corresponding $\hat a$.

Note that we sometimes denote the vertex $i_+$ in $\widehat{Q}$ by $i_0$ in order to avoid confusion. We will use $i_+$ and $i_0$ interchangeably.

Given a representation $M=(M_a)_{a\in\Omega}$ of $Q$ in $\mathbf{M}_\gamma^\heartsuit$, define a representation $\widehat{M}=(\widehat{M}_a)_{a\in\widehat{\Omega}}$ of $\widehat{Q}$ in the following way: $\widehat{M}_a=M_a$ if $a\in\widehat{\Omega}\cap\Omega$, $\widehat{M}_{aa_0}=M_aM_{a_0}$ if $aa_0\in\widehat{\Omega}$ and $s(a)=t(a_0)$, $\widehat{M}_{a_0^{-1}a}=M_{a_0}^{-1}M_a$ if $a_0^{-1}a\in\widehat{\Omega}$ and $t(a)=t(a_0)$, and $\widehat{M}_{a_0^{-1}aa_0}=M_{a_0}^{-1}M_aM_{a_0}$ if $s(a)=t(a)=t(a_0)$.

\begin{example}
Consider the following quiver
\[
\begin{tikzcd}
i_+\arrow[r,"a_0"]\arrow[r,bend right,"a_2"] & i_-\arrow[r,"b"]\arrow[l,bend right,swap,"a_1"]\arrow[l,loop above,"l_{i_-,1}"]\arrow[l,loop below,"l_{i_-,2}"] & 1\arrow[d,"c"]\\&&2\arrow[lu,swap,"d"]
\end{tikzcd}
\]
with potential $W=a_1l_{i_-,1}^2l_{i_-,2}^3a_0+l_{i_-,1}dcb$. After contracting along $a_0$ we get the quiver
\[
\begin{tikzcd}
i_+\arrow[out=230,in=200,loop,"a_0^{-1}a_2"]\arrow[r,"ba_0"]\arrow[out=160,in=130,loop,"a_1a_0"]\arrow[l,loop above,"a_0^{-1}l_{i_-,1}a_0"]\arrow[l,loop below,"a_0^{-1}l_{i_-,2}a_0"] & 1\arrow[d,"c"]\\&2\arrow[lu,swap,"a_0^{-1}d"]
\end{tikzcd}
\]
and the potential $\widehat{W}=(a_1a_0)(a_0^{-1}l_{i_-,1}a_0)^2(a_0^{-1}l_{i_-,2}a_0)^3+(a_0^{-1}l_{i_-,1}a_0)(a_0^{-1}d)c(ba_0)$.
\end{example}

\begin{rem}
In quiver gauge theory, edge contraction is the process of Higgsing by giving the bifundamental field $a_0$ a vacuum expectation value (vev). If $a_0$ only appears in terms with four or more arrows in $W$, then Higgsing is exactly the same as edge contraction. If $a_0$ appears in a cubic term $a_0bc$ in $W$, then this vev has the effect of giving a mass to $b$ and $c$. We take cyclic partial derivatives $\frac{\partial W}{\partial b}=c+W_1$ and $\frac{\partial W}{\partial c}=b+W_2$, then substitute $c=-W_1,\;b=-W_2$ back to $W$ to obtain $\widehat W$. Since the sheaf of vanishing cycles is supported on the critical locus of $W$, this massive case is indeed equivalent to the edge contraction.
\end{rem}

\subsection{Compatibility with multiplication and comultiplication}

Fix a dimension vector $\gamma=(\gamma^i)_{i\in I}$ of $Q$-representations with $\gamma^{i_+}=\gamma^{i_-}$, and a framing vector $\omega=(\omega^i)_{i\in I}$. Let $\hat{\gamma}=(\hat{\gamma}^i)_{i\in I\setminus\{i_-\}}=(\gamma^i)_{i\in I\setminus\{i_-\}}$, and $\hat{\omega}=(\hat{\omega}^i)_{i\in I\setminus\{i_-\}}=(\omega^i, i\in I\setminus\{i_+,i_-\};\;\hat{\omega}^{i_+}=\omega^{i_+}+\omega^{i_-})$, which are dimension vector and framing vector of $\widehat{Q}$-representations. Then $\mathcal{M}_{\gamma, \omega}^{\circ,\heartsuit}=\mathbf{M}_{\gamma, \omega}^{\circ,\heartsuit}/\mathbf{G}_\gamma\cong(\mathbf{M}_{\gamma,\omega}^{\circ,\heartsuit}/\GL_{\gamma_{i_-}}(\mathbb{C}))/\mathbf{G}_{\hat{\gamma}}\cong\mathbf{M}_{\hat{\gamma},\hat{\omega}}^{\circ}/\mathbf{G}_{\hat{\gamma}}=\mathcal{M}_{\hat{\gamma},\hat{\omega}}^{\circ}$, and the isomorphism is denoted by $\epsilon_{\gamma,\omega}: \mathcal{M}_{\gamma,\omega}^{\circ,\heartsuit}\rightarrow\mathcal{M}_{\hat{\gamma},\hat{\omega}}^\circ$.

Let's first check the compatibility of the inclusions $\iota$ (with various subscripts in the following diagrams corresponding to the dimension vectors) induced by $\mathbf{M}_{\gamma,\omega}^{\circ,\heartsuit}\hookrightarrow\mathbf{M}_{\gamma,\omega}^\circ$ with the multiplication of the COHA. Note that $\mathbf{M}_{\gamma_1,\gamma_2}^\heartsuit:=\mathbf{M}_\gamma^\heartsuit\cap\mathbf{M}_{\gamma_1,\gamma_2}=\widetilde{\mathbf{M}}_{\gamma_1,\gamma_2}^\heartsuit:=p^{-1}(\mathbf{M}_{\gamma_1}^\heartsuit\times\mathbf{M}_{\gamma_2}^\heartsuit)$, where $p:\mathbf{M}_{\gamma_1,\gamma_2}\rightarrow\mathbf{M}_{\gamma_1}\times\mathbf{M}_{\gamma_2}$ is the natural projection.

\begin{itemize}
\item[$\bullet$]For $\alpha=p_{\gamma_{1},\gamma_{2},\omega_1,\omega_2}^{\circ,*}$, the following diagram commutes 
$$\begin{xy}
(0,20)*+{\mathbf{M}_{\gamma_{1},\gamma_{2}; \omega_1, \omega_2}^{\circ}/\mathbf{G}_{\gamma_1}\times\mathbf{G}_{\gamma_2}}="v1";
(80,20)*+{(\mathbf{M}_{\gamma_1, \omega_1}^{\circ}\times\mathbf{M}_{\gamma_2,\omega_2}^{\circ})/\mathbf{G}_{\gamma_1}\times\mathbf{G}_{\gamma_2}}="v2";
(0,0)*+{\mathbf{M}_{\gamma_{1},\gamma_{2}; \omega_1, \omega_2}^{\circ,\heartsuit}/\mathbf{G}_{\gamma_1}\times\mathbf{G}_{\gamma_2}}="v3";
(80,0)*+{(\mathbf{M}_{\gamma_1, \omega_1}^{\circ,\heartsuit}\times\mathbf{M}_{\gamma_2, \omega_2}^{\circ,\heartsuit})/\mathbf{G}_{\gamma_2}\times\mathbf{G}_{\gamma_2}}="v4";
{\ar@{->}^{p_{\gamma_{1},\gamma_{2},\omega_1,\omega_2}^\circ} "v1";"v2"};
{\ar@{->}^{p_{\gamma_{1},\gamma_{2},\omega_1,\omega_2}^{\circ,\heartsuit}}
"v3";"v4"};{\ar@{->}_{\iota_{1,2;1\times 2}} "v3";"v1"}; {\ar@{->}_{\iota_{1\times 2}} "v4";"v2"}
\end{xy}$$ so does the diagram $$\begin{xy}
(0,20)*+{\coh_{c,\mathbf{G}_{\gamma_1}\times\mathbf{G}_{\gamma_2}}^{\bullet,crit}(\mathbf{M}_{\gamma_{1},\gamma_{2}}^{sp},W_{\gamma_1,\gamma_2})^\vee\otimes\T^{d_1}}="v1";
(80,20)*+{\coh_{c,\mathbf{G}_{\gamma_1}\times\mathbf{G}_{\gamma_2}}^{\bullet,crit}(\mathbf{M}_{\gamma_1}^{sp}\times\mathbf{M}_{\gamma_2}^{sp},W_{\gamma_1}\boxplus W_{\gamma_2})^\vee}="v2";
(0,0)*+{\coh_{c,\mathbf{G}_{\gamma_1}\times\mathbf{G}_{\gamma_2}}^{\bullet,crit}(\mathbf{M}_{\gamma_{1},\gamma_{2}}^{sp,\heartsuit},W_{\gamma_1,\gamma_2})^\vee\otimes\T^{d_1}}="v3";
(80,0)*+{\coh_{c,\mathbf{G}_{\gamma_1}\times\mathbf{G}_{\gamma_2}}^{\bullet,crit}(\mathbf{M}_{\gamma_1}^{sp,\heartsuit}\times\mathbf{M}_{\gamma_2}^{sp,\heartsuit},W_{\gamma_1}\boxplus W_{\gamma_2})^\vee}="v4";
{\ar@{->}_{\alpha} "v2";"v1"};
{\ar@{->}_{\alpha^\heartsuit}
"v4";"v3"};{\ar@{->}^{\iota_{1,2;1\times 2}^*} "v1";"v3"}; {\ar@{->}^{\iota_{1\times 2}^*} "v2";"v4"}
\end{xy}$$
where $d_1=\sum_{a\in\Omega}\gamma_2^{s(a)}\gamma_1^{t(a)}$.

\item[$\bullet$]For $\beta=q_{\gamma_{1},\gamma_{2},\omega_1,\omega_2}^{\circ,*}$, similarly as above, the commutativity of $$\begin{xy}
(0,20)*+{\mathbf{M}_{\gamma_{1},\gamma_{2}; \omega_1, \omega_2}^{\circ}/\mathbf{G}_{\gamma_1}\times\mathbf{G}_{\gamma_2}}="v1";
(80,20)*+{\mathbf{M}_{\gamma_1, \gamma_2; \omega_1, \omega_2}^{\circ}/\mathbf{G}_{\gamma_1,\gamma_2}}="v2";
(0,0)*+{\mathbf{M}_{\gamma_{1},\gamma_{2}; \omega_1, \omega_2}^{\circ,\heartsuit}/\mathbf{G}_{\gamma_1}\times\mathbf{G}_{\gamma_2}}="v3";
(80,0)*+{\mathbf{M}_{\gamma_1, \gamma_2; \omega_1, \omega_2}^{\circ,\heartsuit}/\mathbf{G}_{\gamma_1,\gamma_2}}="v4";
{\ar@{->}^{q_{\gamma_{1},\gamma_{2},\omega_1,\omega_2}^\circ} "v1";"v2"};
{\ar@{->}^{q_{\gamma_{1},\gamma_{2},\omega_1,\omega_2}^{\circ,\heartsuit}}
"v3";"v4"};{\ar@{->}_{\iota_{1,2;1\times 2}}
"v3";"v1"}; {\ar@{->}_{\iota_{1,2}} "v4";"v2"}
\end{xy}$$
implies the commutativity of $$\begin{xy}
(0,20)*+{\coh_{c,\mathbf{G}_{\gamma_1}\times\mathbf{G}_{\gamma_2}}^{\bullet,crit}(\mathbf{M}_{\gamma_{1},\gamma_{2}}^{sp},W_{\gamma_1,\gamma_2})^\vee\otimes\T^{d_0}}="v1";
(80,20)*+{\coh_{c,\mathbf{G}_{\gamma_1,\gamma_2}}^{\bullet,crit}(\mathbf{M}_{\gamma_1, \gamma_2}^{sp},W_{\gamma_1,\gamma_2})^\vee}="v2";
(0,0)*+{\coh_{c,\mathbf{G}_{\gamma_1}\times\mathbf{G}_{\gamma_2}}^{\bullet,crit}(\mathbf{M}_{\gamma_{1},\gamma_{2}}^{sp,\heartsuit},W_{\gamma_1,\gamma_2})^\vee\otimes\T^{d_0}}="v3";
(80,0)*+{\coh_{c,\mathbf{G}_{\gamma_1,\gamma_2}}^{\bullet,crit}(\mathbf{M}_{\gamma_1, \gamma_2}^{sp,\heartsuit},W_{\gamma_1,\gamma_2})^\vee}="v4";
{\ar@{->}_{\beta} "v2";"v1"};
{\ar@{->}_{\beta^\heartsuit} "v4";"v3"};{\ar@{->}^{\iota_{1,2;1\times 2}^*}
"v1";"v3"}; {\ar@{->}^{\iota_{1,2}^*} "v2";"v4"}
\end{xy}$$
where $d_0=\sum_{i\in I}\gamma_2^i\gamma_1^i$.

\item[$\bullet$]For $\zeta=j_{\gamma_{1},\gamma_{2},\omega_1,\omega_2,*}^\circ$, since the diagram $$\begin{xy}
(0,20)*+{\mathbf{M}_{\gamma_1, \gamma_2; \omega_1, \omega_2}^{\circ}/\mathbf{G}_{\gamma_1,\gamma_2}}="v1";
(80,20)*+{\mathbf{M}_{\gamma,\omega}^{\circ}/\mathbf{G}_{\gamma_1,\gamma_2}}="v2";
(0,0)*+{\mathbf{M}_{\gamma_1, \gamma_2; \omega_1, \omega_2}^{\circ,\heartsuit}/\mathbf{G}_{\gamma_1,\gamma_2}}="v3";
(80,0)*+{\mathbf{M}_{\gamma,\omega}^{\circ,\heartsuit}/\mathbf{G}_{\gamma_1,\gamma_2}}="v4";
{\ar@{->}^{j_{\gamma_{1},\gamma_{2},\omega_1,\omega_2}^\circ} "v1";"v2"};
{\ar@{->}^{j_{\gamma_{1},\gamma_{2},\omega_1,\omega_2}^{\circ,\heartsuit}}
"v3";"v4"};{\ar@{->}_{\iota_{1,2}}
"v3";"v1"}; {\ar@{->}_{\iota_{\gamma;1,2}} "v4";"v2"}
\end{xy}$$ is Cartesian, the following diagram commutes by Theorem \ref{base}: $$\begin{xy}
(0,20)*+{\coh_{c,\mathbf{G}_{\gamma_1,\gamma_2}}^{\bullet,crit}(\mathbf{M}_{\gamma_1, \gamma_2}^{sp},W_{\gamma_1,\gamma_2})^\vee}="v1";
(80,20)*+{\coh_{c,\mathbf{G}_{\gamma_1,\gamma_2}}^{\bullet,crit}(\mathbf{M}_\gamma^{sp},W_\gamma)^\vee}="v2";
(0,0)*+{\coh_{c,\mathbf{G}_{\gamma_1,\gamma_2}}^{\bullet,crit}(\mathbf{M}_{\gamma_1, \gamma_2}^{sp,\heartsuit},W_{\gamma_1,\gamma_2})^\vee}="v3";
(80,0)*+{\coh_{c,\mathbf{G}_{\gamma_1,\gamma_2}}^{\bullet,crit}(\mathbf{M}_\gamma^{sp,\heartsuit},W_\gamma)^\vee}="v4";
{\ar@{->}^{\zeta} "v1";"v2"};
{\ar@{->}^{\zeta^\heartsuit}
"v3";"v4"};{\ar@{->}^{\iota_{1,2}^*}
"v1";"v3"}; {\ar@{->}^{\iota_{\gamma;1,2}^*} "v2";"v4"}
\end{xy}$$

\item[$\bullet$]For $\delta=pr_{\gamma_{1},\gamma_{2},\omega_1,\omega_2,*}^\circ$, similar as $\zeta$, the Cartesian diagram $$\begin{xy}
(0,20)*+{\mathbf{M}_{\gamma,\omega}^{\circ}/\mathbf{G}_{\gamma_1,\gamma_2}}="v1";
(80,20)*+{\mathbf{M}_{\gamma,\omega}^{\circ}/\mathbf{G}_\gamma}="v2";
(0,0)*+{\mathbf{M}_{\gamma,\omega}^{\circ,\heartsuit}/\mathbf{G}_{\gamma_1,\gamma_2}}="v3";
(80,0)*+{\mathbf{M}_{\gamma,\omega}^{\circ,\heartsuit}/\mathbf{G}_\gamma}="v4";
{\ar@{->}^{pr_{\gamma_{1},\gamma_{2},\omega_1,\omega_2}^\circ} "v1";"v2"};
{\ar@{->}^{pr_{\gamma_{1},\gamma_{2},\omega_1,\omega_2}^{\circ,\heartsuit}}
"v3";"v4"};{\ar@{->}_{\iota_{\gamma;1,2}}
"v3";"v1"}; {\ar@{->}_{\iota_\gamma} "v4";"v2"}
\end{xy}$$ implies the commutativity of the diagram $$\begin{xy}
(0,20)*+{\coh_{c,\mathbf{G}_{\gamma_1,\gamma_2}}^{\bullet,crit}(\mathbf{M}_\gamma^{sp},W_\gamma)^\vee}="v1";
(80,20)*+{\coh_{c,\mathbf{G}_\gamma}^{\bullet,crit}(\mathbf{M}_\gamma^{sp},W_\gamma)^\vee}="v2";
(0,0)*+{\coh_{c,\mathbf{G}_{\gamma_1,\gamma_2}}^{\bullet,crit}(\mathbf{M}_\gamma^{sp,\heartsuit},W_\gamma)^\vee}="v3";
(80,0)*+{\coh_{c,\mathbf{G}_\gamma}^{\bullet,crit}(\mathbf{M}_\gamma^{sp,\heartsuit},W_\gamma)^\vee}="v4";
{\ar@{->}^{\delta} "v1";"v2"};
{\ar@{->}^{\delta^\heartsuit} "v3";"v4"};{\ar@{->}^{\iota_{\gamma;1,2}^*}
"v1";"v3"}; {\ar@{->}^{\iota_\gamma^*} "v2";"v4"}
\end{xy}$$

\item[$\bullet$]Thom-Sebastiani isomorphism: pullback the sheaves in (\ref{TSsheaves}) from $\mathbf{M}_{\gamma,\omega}^\circ$ to $\mathbf{M}_{\gamma,\omega}^{\circ,\heartsuit}$, and by (\ref{PB}) we can see that the Thom-Sebastiani isomorphism is compatible with $\iota$.
\end{itemize}

Next let's check the compatibility of the maps $\epsilon$ (with various subscripts in the following diagrams corresponding to the dimension vectors) with the multiplication of the COHA. Note that $\varphi_{\tr W_\gamma}$ doesn't change under the edge contraction. Indeed, after replacing each arrow $a$ appearing in $W$ by $\hat a$, the trace of the corresponding linear map remains the same.

\begin{itemize}
\item[$\bullet$]For $\alpha$, the perimeter of the following diagram is Cartesian: $$\begin{xy}
(10,20)*+{\mathbf{M}_{\gamma_{1},\gamma_{2}; \omega_1, \omega_2}^{\circ,\heartsuit}/\mathbf{G}_{\gamma_1}\times\mathbf{G}_{\gamma_2}}="v1";
(80,20)*+{(\mathbf{M}_{\gamma_1, \omega_1}^{\circ,\heartsuit}\times\mathbf{M}_{\gamma_1, \omega_1}^{\circ,\heartsuit})/\mathbf{G}_{\gamma_1}\times\mathbf{G}_{\gamma_2}}="v2"; (10,0)*+{\mathbf{M}_{\hat{\gamma_{1}},\hat{\gamma_{2}}; \hat{\omega_1}, \hat{\omega_2}}^{\circ}/\mathbf{G}_{\hat{\gamma_1}}\times\mathbf{G}_{\hat{\gamma_2}}}="v3";
(80,0)*+{(\mathbf{M}_{\hat{\gamma_1}, \hat{\omega_1}}^{\circ}\times\mathbf{M}_{\hat{\gamma_1}, \hat{\omega_1}}^{\circ})/\mathbf{G}_{\hat{\gamma_1}}\times\mathbf{G}_{\hat{\gamma_2}}}="v4";
(-20,40)*+{C_1}="v5";
{\ar@{->}^{p_{\gamma_1,\gamma_2,\omega_1,\omega_2}^{\circ,\heartsuit}} "v1";"v2"}; {\ar@{->}^{p_{\hat{\gamma_1},\hat{\gamma_2},\hat{\omega_1},\hat{\omega_2}}^\circ} "v3";"v4"};
{\ar@{->}^{\epsilon_{1,2;1\times 2}}
"v1";"v3"}; {\ar@{->}^{\epsilon_{1\times 2}}_\wr "v2";"v4"}; {\ar@{->}^{u_1} "v1";"v5"}; {\ar@/^1pc/^{u'_1} "v5";"v2"}; {\ar@/_2pc/_{u''_1} "v5";"v3"}
\end{xy}$$ where $\epsilon_{1,2;1\times 2}$ is an affine fibration with fiber $\mathbb{C}^{\gamma_1^{i_-}\gamma_2^{i_-}}$ since $\mathbf{M}_{\hat{\gamma_{1}},\hat{\gamma_{2}}; \hat{\omega_1},\hat{\omega_2}}^{\circ}/\mathbf{G}_{\hat{\gamma_1}}\times\mathbf{G}_{\hat{\gamma_2}}\cong(\mathbf{M}_{\gamma_{1},\gamma_{2}; \omega_1,\omega_2}^{\circ,\heartsuit}/\GL_{\gamma_1^{i_-},\gamma_2^{i_-}}(\mathbb{C}))/\mathbf{G}_{\hat{\gamma_1}}\times\mathbf{G}_{\hat{\gamma_2}}$, and $u_1$ is an affine fibration with fiber $\mathbb{C}^{\gamma_2^{i_+}\gamma_1^{i_-}}=\mathbb{C}^{\gamma_2^{i_-}\gamma_1^{i_-}}$. Then $u_1^*$ is an isomorphism, which implies that $$u_{1,*}u_1^*=(u_1^*)^{-1}u_1^*u_{1,*}u_1^*=(u_1^*)^{-1}[\mathfrak{eu}(u_1)^{-1}]u_1^*=\mathfrak{eu}(u_1)^{-1},$$ where $\mathfrak{eu}(u_1)=\prod_{\alpha=1}^{\gamma_2^{i_+}}\prod_{\alpha'=1}^{\gamma_1^{i_-}}(x_{i_-,\alpha'}-x_{i_+,\alpha})$. The last equality holds because $u_1^*$ intertwines the actions of $\coh(C_1,\QQ)$ and $\coh(\mathbf{M}_{\gamma_{1},\gamma_{2}; \omega_1, \omega_2}^{\circ,\heartsuit}/\mathbf{G}_{\gamma_1}\times\mathbf{G}_{\gamma_2},\QQ)$.

In addition, it's easy to see that there exists $\nu$ satisfying the assumption of Theorem \ref{base}. Thus in the diagram
$$\begin{xy}
(0,20)*+{\coh_{c,\mathbf{G}_{\gamma_1}\times\mathbf{G}_{\gamma_2}}^{\bullet,crit}(\mathbf{M}_{\gamma_{1},\gamma_{2}}^{sp,\heartsuit},W_{\gamma_1,\gamma_2})^\vee\otimes\T^{d_1}}="v1";
(80,20)*+{\coh_{c,\mathbf{G}_{\gamma_1}\times\mathbf{G}_{\gamma_2}}^{\bullet,crit}(\mathbf{M}_{\gamma_1}^{sp,\heartsuit}\times\mathbf{M}_{\gamma_1}^{sp,\heartsuit},W_{\gamma_1}\boxplus W_{\gamma_2})^\vee}="v2";
(0,0)*+{\coh_{c,\mathbf{G}_{\hat{\gamma_1}}\times\mathbf{G}_{\hat{\gamma_2}}}^{\bullet,crit}(\mathbf{M}_{\hat{\gamma_{1}},\hat{\gamma_{2}}}^{sp},W_{\hat{\gamma_1},\hat{\gamma_2}})^\vee\otimes\T^{\hat{d}_1}}="v3";
(80,0)*+{\coh_{c,\mathbf{G}_{\hat{\gamma_1}}\times\mathbf{G}_{\hat{\gamma_2}}}^{\bullet,crit}(\mathbf{M}_{\hat{\gamma_1}}^{sp}\times\mathbf{M}_{\hat{\gamma_1}}^{sp},W_{\hat{\gamma_1}}\boxplus W_{\hat{\gamma_2}})^\vee}="v4";
{\ar@{->}_{\alpha^\heartsuit} "v2";"v1"};
{\ar@{->}_{\hat{\alpha}}
"v4";"v3"};{\ar@{->}^{\epsilon_{1,2;1\times 2,*}}
"v1";"v3"}; {\ar@{->}^{\epsilon_{1\times 2,*}} "v2";"v4"}
\end{xy}$$ we have $$\epsilon_{1,2;1\times 2,*}\alpha^\heartsuit=u''_{1,*}u_{1,*}u_1^*u_1^{'*}=u''_{1,*}[\mathfrak{eu}(u_1)^{-1}]u_1^{'*}=[\mathfrak{eu}(u_1)^{-1}]\hat{\alpha}\epsilon_{1\times 2,*}.$$
Here $\hat{d}_1=\sum_{a\in\widehat{\Omega}}\gamma_2^{s(a)}\gamma_1^{t(a)}$.

\item[$\bullet$]For $\beta$, the perimeter of the following diagram is Cartesian: $$\begin{xy}
(10,20)*+{\mathbf{M}_{\gamma_{1},\gamma_{2}; \omega_1,\omega_2}^{\circ,\heartsuit}/\mathbf{G}_{\gamma_1}\times\mathbf{G}_{\gamma_2}}="v1";
(80,20)*+{\mathbf{M}_{\gamma_1, \gamma_2; \omega_1,\omega_2}^{\circ,\heartsuit}/\mathbf{G}_{\gamma_1,\gamma_2}}="v2";
(10,0)*+{\mathbf{M}_{\hat{\gamma_{1}},\hat{\gamma_{2}}; \hat{\omega_1},\hat{\omega_2}}^{\circ}/\mathbf{G}_{\hat{\gamma_1}}\times\mathbf{G}_{\hat{\gamma_2}}}="v3";
(80,0)*+{\mathbf{M}_{\hat{\gamma_1}, \hat{\gamma_2}; \hat{\omega_1}, \hat{\omega_2}}^{\circ}/\mathbf{G}_{\hat{\gamma_1},\hat{\gamma_2}}}="v4"; (-20,40)*+{C_2}="v5"; {\ar@{->}^{q_{\gamma_{1},\gamma_{2},\omega_1,\omega_2}^{\circ\heartsuit}} "v1";"v2"};
{\ar@{->}^{q_{\hat{\gamma_{1}},\hat{\gamma_{2}},\hat{\omega_1},\hat{\omega_2}}^\circ}
"v3";"v4"};{\ar@{->}^{\epsilon_{1,2;1\times 2}}
"v1";"v3"}; {\ar@{->}^{\epsilon_{1,2}}_\wr "v2";"v4"};{\ar@{->}^{u_2} "v1";"v5"}; {\ar@/^1pc/^{u'_2} "v5";"v2"}; {\ar@/_2pc/_{u''_2} "v5";"v3"}
\end{xy}$$ and $u_2$ is an affine fibration with fiber $\mathbb{C}^{\gamma_2^{i_+}\gamma_1^{i_-}}=\mathbb{C}^{\gamma_2^{i_-}\gamma_1^{i_-}}$. Then similarly as above, $u_{2,*}u_2^*=\mathfrak{eu}(u_2)^{-1}$, and $\nu$ exists. Therefore in the following diagram
$$\begin{xy}
(0,20)*+{\coh_{c,\mathbf{G}_{\gamma_1}\times\mathbf{G}_{\gamma_2}}^{\bullet,crit}(\mathbf{M}_{\gamma_{1},\gamma_{2}}^{sp,\heartsuit},W_{\gamma_1,\gamma_2})^\vee\otimes\T^{d_0}}="v1";
(80,20)*+{\coh_{c,\mathbf{G}_{\gamma_1,\gamma_2}}^{\bullet,crit}(\mathbf{M}_{\gamma_1,\gamma_2}^{sp,\heartsuit},W_{\gamma_1,\gamma_2})^\vee}="v2";
(0,0)*+{\coh_{c,\mathbf{G}_{\hat{\gamma_1}}\times\mathbf{G}_{\hat{\gamma_2}}}^{\bullet,crit}(\mathbf{M}_{\hat{\gamma_1},\hat{\gamma_2}}^{sp},W_{\hat{\gamma_1},\hat{\gamma_2}})^\vee\otimes\T^{\hat{d}_0}}="v3";
(80,0)*+{\coh_{c,\mathbf{G}_{\hat{\gamma_1},\hat{\gamma_2}}}^{\bullet,crit}(\mathbf{M}_{\hat{\gamma_1},\hat{\gamma_2}}^{sp},W_{\hat{\gamma_1},\hat{\gamma_2}})^\vee}="v4";
{\ar@{->}_{\beta^\heartsuit} "v2";"v1"};
{\ar@{->}_{\hat{\beta}}
"v4";"v3"};{\ar@{->}^{\epsilon_{1,2;1\times 2,*}}
"v1";"v3"}; {\ar@{->}^{\epsilon_{1,2,*}} "v2";"v4"}
\end{xy}$$ we get $$\epsilon_{1,2;1\times 2,*}\beta^\heartsuit=u''_{2,*}u_{2,*}u_2^*u_2^{'*}=u''_{2,*}[\mathfrak{eu}(u_2)^{-1}]u_2^{'*}=[\mathfrak{eu}(u_2)^{-1}]\hat{\beta}\epsilon_{1,2,*}.$$ by Theorem \ref{base}. Here $\hat{d}_0=\sum_{i\in\hat{I}}\gamma_2^i\gamma_1^i$.

\item[$\bullet$]For $\zeta$, the following diagram commutes: $$\begin{xy}
(0,20)*+{\mathbf{M}_{\gamma_1, \gamma_2; \omega_1,\omega_2}^{\circ,\heartsuit}/\mathbf{G}_{\gamma_1,\gamma_2}}="v1";
(80,20)*+{\mathbf{M}_{\gamma,\omega}^{\circ,\heartsuit}/\mathbf{G}_{\gamma_1,\gamma_2}}="v2";
(0,0)*+{\mathbf{M}_{\hat{\gamma_1}, \hat{\gamma_2}; \hat{\omega_1}, \hat{\omega_2}}^{\circ}/\mathbf{G}_{\hat{\gamma_1},\hat{\gamma_2}}}="v3";
(80,0)*+{\mathbf{M}_{\hat{\gamma}, \hat{\omega}}^{\circ}/\mathbf{G}_{\hat{\gamma_1},\hat{\gamma_2}}}="v4";
{\ar@{->}^{j_{\gamma_{1},\gamma_{2},\omega_1,\omega_2}^{\circ,\heartsuit}} "v1";"v2"};
{\ar@{->}^{j_{\hat{\gamma_{1}},\hat{\gamma_{2}},\hat{\omega_1},\hat{\omega_2}}^\circ} "v3";"v4"};{\ar@{->}^{\epsilon_{1,2}}_\wr
"v1";"v3"}; {\ar@{->}^{\epsilon_{\gamma;1,2}} "v2";"v4"}
\end{xy}$$

thus the diagram
$$\begin{xy}
(0,20)*+{\coh_{c,\mathbf{G}_{\gamma_1,\gamma_2}}^{\bullet,crit}(\mathbf{M}_{\gamma_1,\gamma_2}^{sp,\heartsuit},W_{\gamma_1,\gamma_2})^\vee}="v1";
(80,20)*+{\coh_{c,\mathbf{G}_{\gamma_1,\gamma_2}}^{\bullet,crit}(\mathbf{M}_\gamma^{sp,\heartsuit},W_\gamma)^\vee}="v2";
(0,0)*+{\coh_{c,\mathbf{G}_{\hat{\gamma_1},\hat{\gamma_2}}}^{\bullet,crit}(\mathbf{M}_{\hat{\gamma_1},\hat{\gamma_2}}^{sp},W_{\hat{\gamma_1},\hat{\gamma_2}})^\vee}="v3";
(80,0)*+{\coh_{c,\mathbf{G}_{\hat{\gamma_1},\hat{\gamma_2}}}^{\bullet,crit}(\mathbf{M}_{\hat{\gamma}}^{sp},W_{\hat{\gamma}})^\vee}="v4";
{\ar@{->}^{\zeta^\heartsuit} "v1";"v2"};
{\ar@{->}^{\hat{\zeta}} "v3";"v4"};{\ar@{->}^{\epsilon_{1,2,*}}
"v1";"v3"}; {\ar@{->}^{\epsilon_{\gamma;1,2,*}} "v2";"v4"}
\end{xy}$$ commutes as well.

\item[$\bullet$]For $\delta$, the commutativity of the diagram $$\begin{xy}
(0,20)*+{\mathbf{M}_{\gamma,\omega}^{\circ,\heartsuit}/\mathbf{G}_{\gamma_1,\gamma_2}}="v1";
(80,20)*+{\mathbf{M}_{\gamma,\omega}^{\circ,\heartsuit}/\mathbf{G}_\gamma}="v2";
(0,0)*+{\mathbf{M}_{\hat{\gamma}, \hat{\omega}}^{\circ}/\mathbf{G}_{\hat{\gamma_1},\hat{\gamma_2}}}="v3";
(80,0)*+{\mathbf{M}_{\hat{\gamma}, \hat{\omega}}^{\circ}/\mathbf{G}_{\hat{\gamma}}}="v4";
{\ar@{->}^{pr_{\gamma_{1},\gamma_{2},\omega_1,\omega_2}^{\circ,\heartsuit}} "v1";"v2"};
{\ar@{->}^{pr_{\hat{\gamma_{1}},\hat{\gamma_{2}},\hat{\omega_1},\hat{\omega_2}}^\circ} "v3";"v4"};{\ar@{->}^{\epsilon_{\gamma;1,2}} "v1";"v3"}; {\ar@{->}^{\epsilon_\gamma}_\wr "v2";"v4"}
\end{xy}$$
gives rise to the commutativity of the diagram
$$\begin{xy}
(0,20)*+{\coh_{c,\mathbf{G}_{\gamma_1,\gamma_2}}^{\bullet,crit}(\mathbf{M}_\gamma^{sp,\heartsuit},W_\gamma)^\vee}="v1";
(80,20)*+{\coh_{c,\mathbf{G}_{\gamma}}^{\bullet,crit}(\mathbf{M}_\gamma^{sp,\heartsuit},W_\gamma)^\vee}="v2";
(0,0)*+{\coh_{c,\mathbf{G}_{\hat{\gamma_1},\hat{\gamma_2}}}^{\bullet,crit}(\mathbf{M}_{\hat{\gamma}}^{sp},W_{\hat{\gamma}})^\vee}="v3";
(80,0)*+{\coh_{c,\mathbf{G}_{\hat{\gamma}}}^{\bullet,crit}(\mathbf{M}_{\hat{\gamma}}^{sp},W_{\hat{\gamma}})^\vee}="v4";
{\ar@{->}^{\delta^\heartsuit} "v1";"v2"};
{\ar@{->}^{\hat{\delta}} "v3";"v4"};{\ar@{->}^{\epsilon_{\gamma;1,2,*}} "v1";"v3"}; {\ar@{->}^{\epsilon_{\gamma,*}} "v2";"v4"}
\end{xy}$$

\item[$\bullet$]Thom-Sebastiani isomorphism: by the pushforward of (\ref{TSsheaves}) and the natural isomorphism (\ref{PF}), we get the compatibility of the Thom-Sebastiani isomorphism with $\epsilon$.
\end{itemize}

Thus by the above proof of compatibility of $\epsilon$ and $\iota$ with the COHA multiplication, we have
\begin{thm}
The map $\mathfrak{c}:=\epsilon_*\iota^*$ is compatible with the COHA multiplication.
\end{thm}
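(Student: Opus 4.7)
The plan is to substitute the definition $m_{\gamma_1,\gamma_2} = \delta\zeta\beta^{-1}\alpha \circ \mathrm{TS}$ into $\mathfrak{c} \circ m_{\gamma_1,\gamma_2} = \epsilon_{\gamma,*}\iota_\gamma^* \circ m_{\gamma_1,\gamma_2}$ and transport $\mathfrak{c}$ past each of the five constituent operations by chaining together the ten commutative diagrams that have just been established in the bullet points (five for $\iota^*$, five for $\epsilon_*$). Since $\iota^*$ commutes strictly with every one of $\alpha, \beta, \zeta, \delta$ and with Thom--Sebastiani, the first pass immediately yields
\[
\iota_\gamma^* \circ m_{\gamma_1,\gamma_2} \;=\; m_{\gamma_1,\gamma_2}^{\heartsuit}\circ(\iota_1^*\otimes\iota_2^*),
\]
where $m^{\heartsuit} := \delta^\heartsuit\zeta^\heartsuit\beta^{\heartsuit,-1}\alpha^\heartsuit\circ\mathrm{TS}^\heartsuit$ is the identically-defined composition living on the heart loci.

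Next I push $\epsilon_{\gamma,*}$ through $m^{\heartsuit}$. The $\zeta$, $\delta$, and $\mathrm{TS}$ squares commute strictly with $\epsilon_*$, but the $\alpha$ and $\beta$ squares produce the correcting Euler factors
\[
\epsilon_{1,2;1\times 2,*}\,\alpha^\heartsuit = [\mathfrak{eu}(u_1)^{-1}]\,\hat\alpha\,\epsilon_{1\times 2,*},\qquad
\epsilon_{1,2;1\times 2,*}\,\beta^\heartsuit = [\mathfrak{eu}(u_2)^{-1}]\,\hat\beta\,\epsilon_{1,2,*}.
\]
Since the pullbacks $\beta = q^{\circ,*}$ and $\hat\beta$ are induced by smooth principal bundle quotient maps and are therefore isomorphisms on equivariant critical cohomology, I may invert the second identity to obtain $\epsilon_{1,2,*}\,\beta^{\heartsuit,-1} = \hat\beta^{-1}\,[\mathfrak{eu}(u_2)]\,\epsilon_{1,2;1\times 2,*}$. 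Concatenating these relations in the correct order produces
\[
\epsilon_{\gamma,*}\circ m^\heartsuit \;=\; \hat\delta\,\hat\zeta\,\hat\beta^{-1}\,\bigl[\mathfrak{eu}(u_2)\,\mathfrak{eu}(u_1)^{-1}\bigr]\,\hat\alpha\,\widehat{\mathrm{TS}}\circ(\epsilon_{1,*}\otimes\epsilon_{2,*}).
\]

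It remains to show the bracketed factor equals $1$, so that the right-hand side collapses to $\hat m_{\hat\gamma_1,\hat\gamma_2}\circ(\mathfrak{c}\otimes\mathfrak{c})$. Both $u_1$ and $u_2$ are affine fibrations with fibre $\mathbb{C}^{\gamma_2^{i_+}\gamma_1^{i_-}}$; the heart condition $M_{a_0}\in\GL$ together with its compatibility on the subrepresentation forces the individual identifications $\gamma_j^{i_+} = \gamma_j^{i_-}$ for $j=1,2$, and a direct comparison of weight decompositions shows that the two tangent representations coincide as $\mathbf{G}_{\gamma_1}\times\mathbf{G}_{\gamma_2}$-modules. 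Hence $\mathfrak{eu}(u_1) = \mathfrak{eu}(u_2) = \prod_{\alpha,\alpha'}(x_{i_-,\alpha'} - x_{i_+,\alpha})$ and the stray factor cancels. The residual Tate-twist bookkeeping is then automatic: the offsets $d_0, d_1, \hat d_0, \hat d_1$ recorded in the bullets above track exactly the differences between $\chi_Q$ and $\chi_{\widehat Q}$ along the dimension vectors constrained by $\gamma^{i_+} = \gamma^{i_-}$. I expect the main obstacle to be precisely this Euler class cancellation, as mere equality of fibre ranks would not suffice; what is required is the equivariant isomorphism of the two fibrations $u_1$ and $u_2$, which must be extracted from the explicit combinatorics of the Cartesian perimeters introduced for $\alpha$ and $\beta$.
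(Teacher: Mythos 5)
Your proposal is correct and follows essentially the same strategy the paper uses: it chains the ten commuting (or commuting-up-to-Euler-factor) squares established in the bullet points, first transporting $\iota^*$ through $m$, then $\epsilon_*$ through $m^\heartsuit$. The one step the paper leaves implicit — that the stray factors $\mathfrak{eu}(u_1)^{-1}$ and $\mathfrak{eu}(u_2)$ produced by the $\alpha$ and $\beta$ squares cancel inside $\hat\beta^{-1}[\mathfrak{eu}(u_2)][\mathfrak{eu}(u_1)^{-1}]\hat\alpha$ — you correctly identify and justify. Your justification of $\mathfrak{eu}(u_1)=\mathfrak{eu}(u_2)$ via comparison of tangent representations is sound, though it can be streamlined: since $\epsilon_{1\times 2}$ and $\epsilon_{1,2}$ are isomorphisms, the fibre products $C_1$ and $C_2$ appearing in the Cartesian perimeters are both canonically identified with $\mathbf{M}_{\hat\gamma_1,\hat\gamma_2;\hat\omega_1,\hat\omega_2}^{\circ}/\mathbf{G}_{\hat\gamma_1}\times\mathbf{G}_{\hat\gamma_2}$, and under these identifications $u_1$ and $u_2$ are literally the same map $\epsilon_{1,2;1\times 2}$, so the equality of Euler classes is tautological rather than requiring a weight-by-weight comparison. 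One small imprecision: you describe $\beta=q^{\circ,*}$ as invertible because $q^\circ$ is a ``principal bundle quotient map''; the correct reason in this framework is that $q^\circ$ is an affine fibration (quotient by the unipotent radical), which is what makes pullback on critical cohomology an isomorphism. Neither point affects the validity of the argument.
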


When there is no potential, an explicit formula of the COHA multiplication is given in \cite[Sec.~2]{KoSo3}. It turns out that the COHA in this case is the shuffle algebra, and we have a more explicit description of $\mathfrak c$ in the next section. In particular, $\mathfrak c$ is surjective for Dynkin quivers.

The compatibility of $\iota$ with the COHA comultiplication can be shown using the following argument: the diagram involving $\overleftarrow{\alpha}$ satisfies Theorem \ref{base}; the diagram of $\overleftarrow{\beta}$ has only pullbacks up to $\mathfrak{eu}(q)$ since $\overleftarrow{\beta}=q_{\gamma_{1},\gamma_{2},\omega_1,\omega_2}^{\circ,*}\mathfrak{eu}(q)$, $\overleftarrow{\beta^\heartsuit}=q_{\gamma_{1},\gamma_{2},\omega_1,\omega_2}^{\circ,\heartsuit,*}\mathfrak{eu}(q^\heartsuit)$, and $\mathfrak{eu}(q)=\mathfrak{eu}(q^\heartsuit)$; the diagrams involving $\overleftarrow{\zeta}$ and $\overleftarrow{\delta}$ both consist of only pullbacks. For $\epsilon$, the diagram involving $\overleftarrow{\alpha}$ consists of only pushforwards; by the arguments on $\beta$ we get $\epsilon_{1,2;1\times 2,*}\beta^\heartsuit=[\mathfrak{eu}(u_2)^{-1}]\hat{\beta}\epsilon_{1,2,*}$, and since $\mathfrak{eu}(q_{\gamma_1,\gamma_2,\omega_1,\omega_2}^{\circ,\heartsuit})=\mathfrak{eu}(q_{\hat{\gamma_1},\hat{\gamma_2},\hat{\omega_1},\hat{\omega_2}}^\circ)\mathfrak{eu}(u_2)$, we have $\epsilon_{1,2;1\times 2,*}\overleftarrow{\beta^\heartsuit}=\epsilon_{1,2;1\times 2,*}\beta^\heartsuit\mathfrak{eu}(q_{\gamma_1,\gamma_2,\omega_1,\omega_2}^{\circ,\heartsuit})=[\mathfrak{eu}(u_2)^{-1}]\hat{\beta}\epsilon_{1,2,*}\mathfrak{eu}(q_{\gamma_1,\gamma_2,\omega_1,\omega_2}^{\circ,\heartsuit})=\hat{\beta}\epsilon_{1,2,*}\mathfrak{eu}(q_{\hat{\gamma_1},\hat{\gamma_2},\hat{\omega_1},\hat{\omega_2}}^\circ)=\overleftarrow{\hat{\beta}}\epsilon_{1,2,*}$.The perimeter of the concatenation of the diagrams involving $\overleftarrow{\zeta}$ and $\overleftarrow{\delta}$ $$\begin{xy}
(0,20)*+{\mathbf{M}_{\gamma_1, \gamma_2; \omega_1,\omega_2}^{\circ,\heartsuit}/\mathbf{G}_{\gamma_1,\gamma_2}}="v1";
(50,20)*+{\mathbf{M}_{\gamma,\omega}^{\circ,\heartsuit}/\mathbf{G}_{\gamma_1,\gamma_2}}="v2";
(100,20)*+{\mathbf{M}_{\gamma,\omega}^{\circ,\heartsuit}/\mathbf{G}_\gamma}="v3";
(0,0)*+{\mathbf{M}_{\hat{\gamma_1}, \hat{\gamma_2}; \hat{\omega_1}, \hat{\omega_2}}^{\circ}/\mathbf{G}_{\hat{\gamma_1},\hat{\gamma_2}}}="v4";
(50,0)*+{\mathbf{M}_{\hat{\gamma}, \hat{\omega}}^{\circ}/\mathbf{G}_{\hat{\gamma_1},\hat{\gamma_2}}}="v5";
(100,0)*+{\mathbf{M}_{\hat{\gamma}, \hat{\omega}}^{\circ}/\mathbf{G}_{\hat{\gamma}}}="v6"; {\ar@{->}^{j_{\gamma_{1},\gamma_{2},\omega_1,\omega_2}^{\circ,\heartsuit}} "v1";"v2"};
{\ar@{->}^{j_{\hat{\gamma_{1}},\hat{\gamma_{2}},\hat{\omega_1},\hat{\omega_2}}^\circ} "v4";"v5"};{\ar@{->}^{\epsilon_{1,2}}_\wr "v1";"v4"}; 
{\ar@{->}^{pr_{\gamma_{1},\gamma_{2},\omega_1,\omega_2}^{\circ,\heartsuit}} "v2";"v3"};
{\ar@{->}^{pr_{\hat{\gamma_{1}},\hat{\gamma_{2}},\hat{\omega_1},\hat{\omega_2}}^\circ} "v5";"v6"};{\ar@{->}^{\epsilon_{\gamma;1,2}} "v2";"v5"}; {\ar@{->}^{\epsilon_\gamma}_\wr "v3";"v6"}
\end{xy}$$ satisfies the Theorem \ref{base}, thus $\epsilon_{1,2,*}\overleftarrow{\zeta^\heartsuit}\overleftarrow{\delta^\heartsuit}=\overleftarrow{\hat{\zeta}}\overleftarrow{\hat{\delta}}\epsilon_{\gamma,*}$.

To conclude, we have the following theorem by the above argument on the compatibility of $\epsilon$ and $\iota$ with the COHA comultiplication:
\begin{thm}
The map $\mathfrak{c}$ is compatible with the COHA comultiplication.
\end{thm}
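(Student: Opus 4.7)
The plan is to mirror the argument already used for the multiplicative statement but with the roles of pullback and pushforward swapped, decomposing the comultiplication as $\Delta_{\gamma_1,\gamma_2} = \mathrm{TS}\circ\overleftarrow{\alpha}\circ\overleftarrow{\beta}\circ\overleftarrow{\zeta}\circ\overleftarrow{\delta}$ and verifying, one piece at a time, that each factor intertwines with $\iota^*$ and with $\epsilon_*$. Since $\mathfrak{c}=\epsilon_*\iota^*$ factors through both compatibilities, composing the five intermediate squares yields the desired identity $(\mathfrak{c}\otimes\mathfrak{c})\circ\Delta_{Q,W} = \Delta_{\widehat Q,\widehat W}\circ\mathfrak{c}$.

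First I would dispose of the pieces for which the proof is essentially formal. The square for $\overleftarrow{\alpha}$ is the same Cartesian square used for $\alpha$, so Theorem~\ref{base} applies verbatim with the vertical arrows reinterpreted as pushforwards; likewise the Thom--Sebastiani compatibility for $\iota$ follows from the natural transformation (\ref{PB}) and for $\epsilon$ from (\ref{PF}). The squares involving $\overleftarrow{\zeta}$ and $\overleftarrow{\delta}$ consist of pullbacks only on the $\iota$ side, so they commute by functoriality; for the $\epsilon$ side, I would concatenate the two squares and observe that the perimeter is a Cartesian diagram whose vertical maps are proper (quotients by intermediate parabolic subgroups), so Theorem~\ref{base} produces $\epsilon_{1,2,*}\overleftarrow{\zeta^\heartsuit}\overleftarrow{\delta^\heartsuit}=\overleftarrow{\hat\zeta}\overleftarrow{\hat\delta}\epsilon_{\gamma,*}$.

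The genuinely delicate step, and the main obstacle, is the $\overleftarrow{\beta}$ piece, because $\overleftarrow{\beta}$ is not a naked pullback: by definition $\overleftarrow{\beta}=q^{\circ,*}_{\gamma_1,\gamma_2,\omega_1,\omega_2}\cdot\mathfrak{eu}(q)$, and its heart/contracted analogs carry their own Euler classes. On the $\iota$ side this is benign because $q^\heartsuit$ is simply the restriction of $q$ to an open subvariety, so $\mathfrak{eu}(q)=\mathfrak{eu}(q^\heartsuit)$ under $\iota^*$. On the $\epsilon$ side the argument from the multiplicative case gave $\epsilon_{1,2;1\times 2,*}\beta^\heartsuit=[\mathfrak{eu}(u_2)^{-1}]\hat\beta\,\epsilon_{1,2,*}$, where $u_2$ is the affine fibration of fiber dimension $\gamma_1^{i_-}\gamma_2^{i_-}$ arising from the edge contraction. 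The key identity I need to extract is the fibrewise comparison $\mathfrak{eu}(q^{\circ,\heartsuit}_{\gamma_1,\gamma_2,\omega_1,\omega_2})=\mathfrak{eu}(q^\circ_{\hat\gamma_1,\hat\gamma_2,\hat\omega_1,\hat\omega_2})\cdot\mathfrak{eu}(u_2)$, which is a straightforward computation of characters on the maximal torus once one matches the missing $i_-$-block with the affine directions in $u_2$. Granting this identity, the extra factor $\mathfrak{eu}(q^{\circ,\heartsuit})$ hidden inside $\overleftarrow{\beta^\heartsuit}$ absorbs $\mathfrak{eu}(u_2)^{-1}$ and leaves exactly $\mathfrak{eu}(\hat q^\circ)$, giving $\epsilon_{1,2;1\times 2,*}\overleftarrow{\beta^\heartsuit}=\overleftarrow{\hat\beta}\,\epsilon_{1,2,*}$.

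Finally I would chain the five intermediate compatibilities and sum over $\gamma_1+\gamma_2=\gamma$. The only subtlety in the composition is to confirm that the localizations $[L_{\gamma_1,\gamma_2}^{-1}]$ built into $\Delta$ are preserved by $\mathfrak{c}$; this amounts to checking that $L_{\gamma_1,\gamma_2}$ on $(Q,W)$ restricts and pushes forward to $L_{\hat\gamma_1,\hat\gamma_2}$ on $(\widehat Q,\widehat W)$, which is clear from the explicit product formula because the $i_-$-factor of $L$ is precisely what is traded for the $u_2$-Euler class in the previous step. This confirms the theorem.
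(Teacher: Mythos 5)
Your proof follows essentially the same route as the paper's: the same five-term decomposition of $\Delta$, the same piece-by-piece check against $\iota^*$ and $\epsilon_*$, and the same Euler class identity $\mathfrak{eu}(q^{\circ,\heartsuit}_{\gamma_1,\gamma_2,\omega_1,\omega_2})=\mathfrak{eu}(q^\circ_{\hat\gamma_1,\hat\gamma_2,\hat\omega_1,\hat\omega_2})\,\mathfrak{eu}(u_2)$ as the crux of the $\overleftarrow{\beta}$ step. Your only slip is cosmetic: for the $\overleftarrow{\alpha}$ square it is the $\iota$ side (pullback against pushforward) that needs Theorem~\ref{base}, while the $\epsilon$ side is all pushforwards and commutes by functoriality alone.
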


\subsection{Spherical subalgebras}

We study the effects of the edge contraction on the spherical subalgebra on the shuffle algebra. Let $\mathbf H^{=,s}_Q=\mathbf H^s\cap\mathbf H^=_Q$, where $\mathbf H^=_Q=\oplus_{\gamma^{i_+}=\gamma^{i_-}}\mathbf H_{Q,\gamma}$, and $\mathbf H^s_{Q,=}$ be the subalgebra of $\mathbf H^{=,s}_Q$ generated by $\mathbf H_{e_i},\,i\in I\setminus\{i_+,\,i_-\}$ and $\mathbf H_{e_{i_+}}\mathbf H_{e_{i_-}}$, $\mathbf H_{e_{i_-}}\mathbf H_{e_{i_+}}$.

\begin{prop}
There is an algebra homomorphism
\begin{equation}
\mathbf H^s_{Q,=}\rightarrow\mathbf H^s_{\widehat{Q}}\nonumber
\end{equation}
given by quotient $x_{i_-}-x_{i_+}$.
\end{prop}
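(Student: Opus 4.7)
The plan is to define $\phi:\mathbf{H}^s_{Q,=}\to\mathbf{H}^s_{\widehat{Q}}$ by the substitution $x_{i_-,\alpha}\mapsto x_{i_+,\alpha}$ for $\alpha=1,\dots,n$ (with $n=\gamma^{i_+}=\gamma^{i_-}$) and $x_{i,\alpha}\mapsto x_{i,\alpha}$ otherwise, then verify (i) well-definedness into the target, (ii) multiplicativity under the shuffle product, and (iii) preservation of the spherical subalgebra. Well-definedness is formal: any $F\in\mathbf{H}_{Q,\gamma}$ with $\gamma^{i_+}=\gamma^{i_-}=n$ is invariant under $\Sym_n\times\Sym_n$ at the vertices $i_\pm$, so its restriction to the diagonal $\{x_{i_-,\alpha}=x_{i_+,\alpha}\}$ is invariant under the diagonal copy of $\Sym_n$, which is the full $\Sym_n$ acting on the single set $(x_{i_+,\alpha})_\alpha$ appearing in $\mathbf{H}_{\widehat{Q},\widehat{\gamma}}$.

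For multiplicativity I would compare the two shuffle sums. At the vertices $i_\pm$, $\Sh(\gamma_1,\gamma_2)$ is the product of two independent shuffle sets $\Sh(n_1,n_2)\times\Sh(n_1,n_2)$, labelled $(\sigma_+,\sigma_-)$, whereas $\Sh(\widehat{\gamma}_1,\widehat{\gamma}_2)$ at $i_+$ consists of a single $\Sh(n_1,n_2)$. The existence of $a_0$ forces $a_{i_+,i_-}\geqslant1$, so $fac_Q$ contains the piece $\prod_{\alpha_1\in[1,n_1],\alpha_2\in[n_1+1,n]}(x_{i_-,\alpha_2}-x_{i_+,\alpha_1})^{a_{i_+,i_-}}$; after applying $(\sigma_+,\sigma_-)$ and substituting $x_{i_-,k}=x_{i_+,k}$ it becomes $\prod(x_{i_+,\sigma_-(\alpha_2)}-x_{i_+,\sigma_+(\alpha_1)})^{a_{i_+,i_-}}$, which vanishes unless $\sigma_+([1,n_1])\cap\sigma_-([n_1+1,n])=\emptyset$, i.e.\ $\sigma_+=\sigma_-$ as shuffles. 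Thus exactly the diagonal shuffles $(\sigma,\sigma)$ survive the substitution, and they biject with $\Sh(\widehat{\gamma}_1,\widehat{\gamma}_2)$.

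On the diagonal terms I would then check the identity $fac_Q|_{x_{i_-}=x_{i_+}}=fac_{\widehat{Q}}$ directly by arrow counting. The four numerator pieces from pairs in $\{i_+,i_-\}\times\{i_+,i_-\}$ collapse after substitution to a single power $(x_{i_+,\alpha_2}-x_{i_+,\alpha_1})^{a_{i_+,i_+}+a_{i_-,i_+}+a_{i_+,i_-}+a_{i_-,i_-}}$, while the denominators at $i_+$ and $i_-$ each contribute one factor of $(x_{i_+,\alpha_2}-x_{i_+,\alpha_1})$. On the $\widehat{Q}$ side the $(i_+,i_+)$ numerator exponent is $\widehat{a}_{i_+,i_+}=a_{i_+,i_+}+a_{i_-,i_+}+a_{i_+,i_-}+a_{i_-,i_-}-1$ (the $-1$ absorbing $a_0^{-1}a_0=\id$) and the denominator contributes a single factor, so the net exponents agree. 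Mixed pairs $(i,i_+)$ and $(i_+,j)$ for $i,j\in\hat{I}\setminus\{i_+\}$ match via $\widehat{a}_{i,i_+}=a_{i,i_+}+a_{i,i_-}$ and $\widehat{a}_{i_+,j}=a_{i_+,j}+a_{i_-,j}$, and all remaining pairs are unchanged.

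Finally, to see that the image lies in the spherical subalgebra: $\mathbf{H}_{e_i}$ ($i\in I\setminus\{i_\pm\}$) maps identically to the generator $\mathbf{H}_{\widehat{e}_i}$ of $\mathbf{H}^s_{\widehat{Q}}$, while both $\mathbf{H}_{e_{i_+}}\mathbf{H}_{e_{i_-}}$ and $\mathbf{H}_{e_{i_-}}\mathbf{H}_{e_{i_+}}$ (graded piece $e_{i_+}+e_{i_-}$) are sent into the single-variable space $\mathbf{H}_{\widehat{e}_{i_+}}$. Since $\phi$ is an algebra map by the previous steps, its image is generated by the images of generators and thus lies in $\mathbf{H}^s_{\widehat{Q}}$. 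The main obstacle I anticipate is the diagonal $fac$ comparison: carefully confirming that the two extra denominator factors at $i_+$ and $i_-$ combine with the $-1$ in $\widehat{a}_{i_+,i_+}$ to give precisely the single $\widehat{Q}$-denominator factor, with all other arrow-count contributions matching on the nose and no sign errors from the shuffle conventions.
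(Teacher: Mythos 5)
Your proof is correct, and it fills in the details the paper leaves implicit (the proposition is stated there without an explicit argument). The three ingredients you isolate are exactly the ones needed: the factor $(x_{i_-,\sigma_-(\alpha_2)}-x_{i_+,\sigma_+(\alpha_1)})^{a_{i_+i_-}}$ coming from $a_0$ kills every term with $\sigma_+\neq\sigma_-$ after the substitution $x_{i_-,\alpha}\mapsto x_{i_+,\alpha}$ (the disjointness of $\sigma_+([1,n_1])$ and $\sigma_-([n_1+1,n])$ together with the shuffle condition does force $\sigma_+=\sigma_-$); the surviving diagonal copy of $\Sh(n_1,n_2)$ identifies with the shuffle set at $i_0$; and the arrow-count bookkeeping $\widehat a_{i_+i_+}=a_{i_+i_+}+a_{i_+i_-}+a_{i_-i_+}+a_{i_-i_-}-1$ (the $-1$ from excluding $a_0^{-1}a_0$) together with the two denominator factors at $i_\pm$ collapsing to the single one at $i_0$ gives $\phi(fac_Q)|_{\sigma_+=\sigma_-}=fac_{\widehat Q}$ on the nose, with the mixed pairs matching via $\widehat a_{i,i_0}=a_{i,i_+}+a_{i,i_-}$ and $\widehat a_{i_0,j}=a_{i_+,j}+a_{i_-,j}$. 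Passing to $\mathbf H^s_{Q,=}$ via generators is the right way to get the image into $\mathbf H^s_{\widehat Q}$, and your remark about well-definedness (diagonal $\Sym_n$-invariance) closes the last gap. This is a complete, self-contained verification at the shuffle level; it is consistent with, but more explicit than, the paper's route through the COHA-level Theorem 3.3 combined with the shuffle description of $\mathfrak c$ given in the following subsection.
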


In general $\mathbf H^{=,s}_Q$ is not mapped to $\mathbf H^s_{\widehat{Q}}$. Consider the cyclic quiver $C_n$ consisting of n vertices $\{1,\ldots,n\}$ and arrows $a_i:i\rightarrow i+1,\;i\in\Zn_n$, and fix $i_+\in\{1,\ldots,n\}$ and let $i_-=i_++1$. Then $C_{n-1}$ is the quiver obtained from $C_n$ by edge contraction along $i_+\rightarrow i_-$. Then the computation using the explicit formula of the shuffle multiplication (\ref{shufflemult}) shows that $\mathbf H^{=,s}_{C_n}$ is not mapped to $\mathbf H^s_{C_{n-1}}$: in the fac of the edge contraction of an element in $\mathbf H^{=,s}_{C_n,\gamma}$ obtained via multiplication $\mathbf H^{=,s}_{C_n,e_{i_-}}\otimes\mathbf H^{=,s}_{C_n,e_{i_-+1}}\otimes\cdots\otimes\mathbf H^{=,s}_{C_n,e_{i_+}}\rightarrow\mathbf H^{=,s}_{C_n,\gamma}$ for $\gamma=(1,\ldots,1)$ , there are at least $n-1$ factors of the form $x_i-x_{i-1}$, but an element in $\mathbf H^s_{C_{n-1},\hat{\gamma}}$ has less than $n-1$ factors.

\subsection{Compatibility of Drinfeld double}

We investigate the compatibility of the edge contraction 
\begin{equation}
\begin{array}{ll}
\mathfrak{c}: &\mathbf{H}_Q^=\rightarrow\mathbf{H}_{\hat{Q}},
\\&f(x_{i,\alpha},x_{i_+,\alpha},x_{i_-,\alpha})\mapsto f(x_{i,\alpha},x_{i_0,\alpha}),\quad where\;i\in I\setminus\{i_+,i_-\}
\end{array}\nonumber
\end{equation}
with the Hopf structures and the Drinfeld double of the COHA. Here we use the same notation for the edge contraction map of COHA and the shuffle algebra, and $\mathbf H_Q^==\oplus_{\gamma^{i_+}=\gamma^{i_-}}\mathbf{H}_\gamma$. Let $H^{0,\heartsuit}$ be the subalgebra of $H^0$ generated by $\{\psi_i, i\in I\setminus\{i_+,i_-\}\}$ and $\psi_{i_+}\psi_{i_-}$, then it acts on $\mathbf{H}_Q^=$. The edge contraction can be extended to $\mathbf{H}^{\mathfrak{e}}$ via $\psi_i\mapsto\psi_i,i\in I\setminus\{i_+,i_-\}$ and $\psi_{i_+}\psi_{i_-}\mapsto\psi_{i_0}$.

First, for the compatibility of the $H^{0,\heartsuit}$-action and the $\hat{H^0}$-action, let's compare $(\psi_{i_+}\psi_{i_-})f(\psi_{i_+}\psi_{i_-})^{-1}$ and $\psi_{i_0}\mathfrak{c}(f)\psi_{i_0}^{-1}$: $$(\psi_{i_+}\psi_{i_-})f(\psi_{i_+}\psi_{i_-})^{-1}=f\frac{fac^+_1\cdot fac^-_1}{fac^+_2\cdot fac^-_2},$$ where $$fac^+_1=\frac{1}{\Pi^{\gamma^{i_+}}_{\alpha=1}(x_{i_+,\alpha}-z)}\prod_{r_+}\prod_{\alpha=1}^{\gamma^{i_+}}(x_{i_+,\alpha}-z)\prod_{i_+\rightarrow i_-}\prod_{\alpha=1}^{\gamma^{i_-}}(x_{i_-,\alpha}-z)\prod_{\substack{i_+\rightarrow i\\i\neq i_\pm}}\prod_{\alpha=1}^{\gamma^i}(x_{i,\alpha}-z),$$ $$fac^+_2=\frac{1}{\Pi^{\gamma^{i_+}}_{\alpha=1}(z-x_{i_+,\alpha})}\prod_{r_+}\prod_{\alpha=1}^{\gamma^{i_+}}(z-x_{i_+,\alpha})\prod_{i_-\rightarrow i_+}\prod_{\alpha=1}^{\gamma^{i_-}}(z-x_{i_-,\alpha})\prod_{\substack{i\rightarrow i_+\\i\neq i_\pm}}\prod_{\alpha=1}^{\gamma^i}(z-x_{i,\alpha}),$$ $$fac^-_1=\frac{1}{\Pi^{\gamma^{i_-}}_{\alpha=1}(x_{i_-,\alpha}-z)}\prod_{r_-}\prod_{\alpha=1}^{\gamma^{i_-}}(x_{i_-,\alpha}-z)\prod_{i_-\rightarrow i_+}\prod_{\alpha=1}^{\gamma^{i_+}}(x_{i_+,\alpha}-z)\prod_{\substack{i_-\rightarrow i\\i\neq i_\pm}}\prod_{\alpha=1}^{\gamma^i}(x_{i,\alpha}-z),$$ and $$fac^-_2=\frac{1}{\Pi^{\gamma^{i_-}}_{\alpha=1}(z-x_{i_-,\alpha})}\prod_{r_-}\prod_{\alpha=1}^{\gamma^{i_-}}(z-x_{i_-,\alpha})\prod_{i_+\rightarrow i_-}\prod_{\alpha=1}^{\gamma^{i_+}}(z-x_{i_+,\alpha})\prod_{\substack{i\rightarrow i_-\\i\neq i_\pm}}\prod_{\alpha=1}^{\gamma^i}(z-x_{i,\alpha}),$$ here $r_\pm$ are the numbers of loops at $i_\pm$.

On the other hand, $$\psi_{i_0}\mathfrak{c}(f)\psi_{i_0}^{-1}=f\frac{fac^0_1}{fac^0_2},$$ where $$fac^0_1=\frac{1}{\Pi^{\gamma^{i_0}}_{\alpha=1}(x_{i_0,\alpha}-z)}\prod_{r_0}\prod_{\alpha=1}^{\gamma^{i_0}}(x_{i_0,\alpha}-z)\prod_{\substack{i_0\rightarrow i\\i\neq i_0}}\prod_{\alpha=1}^{\gamma^i}(x_{i,\alpha}-z),$$
and $$fac^0_2=\frac{1}{\Pi^{\gamma^{i_0}}_{\alpha=1}(z-x_{i_0,\alpha})}\prod_{r_0}\prod_{\alpha=1}^{\gamma^{i_0}}(z-x_{i_0,\alpha})\prod_{\substack{i\rightarrow i_0\\i\neq i_0}}\prod_{\alpha=1}^{\gamma^i}(z-x_{i,\alpha}),$$ here $r_0$ is the number of loops at $i_0$.

Since
\begin{equation}\label{edges}
\begin{array}{ll}
r_0=r_++r_-+|\{i_+\rightarrow i_-,i_-\rightarrow i_+\}\setminus\{a_0\}|, \\\{i_0\rightarrow i, i\neq i_0\}=\{i_+\rightarrow i,i_-\rightarrow i, i\neq i_\pm\}, \\\{i\rightarrow i_0, i\neq i_0\}=\{i\rightarrow i_+, i\rightarrow i_-, i\neq i_\pm\},
\end{array}
\end{equation}
the edge contraction is compatible with the $H^0$-action on $\mathbf{H}$.

Same argument using (\ref{shufflemult}) and (\ref{edges}) implies that the edge contraction is compatible with the multiplication on $\mathbf{H}^\mathfrak{e}$.

Second, for the comultiplication on $\mathbf{H}^\mathfrak{e}$, we just need to check the elements in $\mathbf{H}_{(1,1),Q}$ and $\mathbf{H}_{1,\widehat{Q}}$. Let $f(x_{[1,(1,1)]})=f(x_{i_+},x_{i_-})\in\mathbf{H}_{(1,1),Q}$, then
\begin{equation}\nonumber
\begin{array}{ll}
\Delta(f)=&\psi_{i_+}(x_{i_+})\psi_{i_-}(x_{i_-})\otimes f(x_{i_+},x_{i_-})\\&+\frac{\psi_{i_+}(x_{i_+})f(x_{i_-}\otimes x_{i_+})}{fac(x_{i_+}|x_{i_-})}+\frac{\psi_{i_-}(x_{i_-})f(x_{i_+}\otimes x_{i_-})}{fac(x_{i_-}|x_{i_+})}\\&+f(x_{i_+},x_{i_-})\otimes1.
\end{array}
\end{equation}
After restricting to $\mathbf{H}_{\gamma^{i_+}=\gamma^{i_-}}^{\mathfrak{e}}$, we get $$\Delta(f)=\psi_{i_+}(x_{i_+})\psi_{i_-}(x_{i_-})\otimes f(x_{i_+},x_{i_-})+f(x_{i_+},x_{i_-})\otimes1.$$ For $g(x_{i_0})\in\mathbf{H}_{1,\widehat{Q}}$, we have $\widehat{\Delta}(g)=\psi_{i_0}(x_{i_0})\otimes g(x_{i_0})+g(x_{i_0})\otimes1$. Moreover, $\mathfrak{c}(\Delta(\psi_{i_+}\psi_{i_-}))=\mathfrak{c}(\Delta(\psi_{i_+})\Delta(\psi_{i_-}))=\mathfrak{c}((\psi_{i_+}\otimes\psi_{i_+})(\psi_{i_-}\otimes\psi_{i_-}))=\mathfrak{c}((\psi_{i_+}\psi_{i_-})\otimes(\psi_{i_+}\psi_{i_-}))=\psi_{i_0}\otimes\psi_{i_0}=\widehat{\Delta}(\psi_{i_0})$. Thus the edge contraction is compatible with the comultiplication.

Third, to show that the edge contraction is compatible with the antipode $S$, we only need to show that $(-1)^{|\gamma|}=(-1)^{|\hat{\gamma}|}$. This holds since $\gamma^{i_+}=\gamma^{i_-}$, which implies that $|\gamma|=\prod_{i\in I}\gamma^i!=\prod_{i\in I\setminus\{i_+,i_-\}}\gamma^i!\cdot\gamma^{i_+}!\cdot\gamma^{i_-}!$ and $|\hat{\gamma}|=\prod_{i\in\hat{I}}\gamma^i!$ have the same parity.

All the above arguments work for $\mathbf{H}^{\mathfrak{e},coop}$ if the edge contraction is extended via $\phi_i\mapsto\phi_i,i\in I\setminus\{i_+,i_-\}$ and $\phi_{i_+}\phi_{i_-}\mapsto\phi_{i_0}$.

Fourth, for the bilinear paring $(\cdot,\cdot)$, the proof (on both $H^0$ and $\mathbf{H}$) is similar as the one for the $H^0$-action on $\mathbf{H}$, except the factor $|A|!$. The change from $|A|!$ to $|\hat{A}|!$ comes from combining the terms involving $x_{i_-,\alpha}-x_{i_-,\alpha'}$ for $\alpha\neq\alpha'$, and there are $|\gamma^{i_-}|!$ such terms.

Finally, we prove the following

\begin{thm}
The edge contraction $\mathfrak{c}$ induces an algebra homomorphism of the Drinfeld doubles $D\mathfrak c:D(\Hc_{Q,W}^{=,s})\rightarrow D(\Hc_{\widehat Q,\widehat W}^s)$.
\end{thm}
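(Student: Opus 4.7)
The plan is to define $D\mathfrak{c}$ as the tensor product $\mathfrak{c}\otimes\mathfrak{c}$ of the edge contractions on the two sides, i.e., on $\mathbf{H}^{\mathfrak{e},s}_{Q,=,loc}$ and $\mathbf{H}^{\mathfrak{e},coop,s}_{Q,=,loc}$, viewed as a map into $\mathbf{H}^{\mathfrak{e},s}_{\widehat Q,loc}\otimes\mathbf{H}^{\mathfrak{e},coop,s}_{\widehat Q,loc}=D(\Hc^s_{\widehat Q,\widehat W})$. Then one verifies that this map respects the four relations characterizing the multiplication on the Drinfeld double. Relations of the form $(a\otimes1)(a'\otimes1)=aa'\otimes1$ and $(1\otimes b)(1\otimes b')=1\otimes bb'$ follow immediately from the fact, established in the previous subsection, that $\mathfrak{c}$ is an algebra homomorphism on each of $\mathbf{H}^{\mathfrak{e}}$ and $\mathbf{H}^{\mathfrak{e},coop}$. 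The relation $(a\otimes1)(1\otimes b)=a\otimes b$ is just functoriality of the tensor product.

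The substantive relation to check is the cross relation
\[(1\otimes b)(a\otimes1)=\sum(a_1,S_B(b_1))\,a_2\otimes b_2\,(a_3,b_3),\]
where $\Delta^2(a)=\sum a_1\otimes a_2\otimes a_3$ and similarly for $b$. Applying $D\mathfrak{c}$ to the right-hand side, the scalar factors $(a_1,S_B(b_1))$ and $(a_3,b_3)$ pass through, while the tensor factors become $\mathfrak{c}(a_2)\otimes\mathfrak{c}(b_2)$. The left-hand side, after applying $D\mathfrak{c}$ and then using the cross relation in $D(\Hc^s_{\widehat Q,\widehat W})$, expands as
\[\sum\bigl(\mathfrak{c}(a)_1,S_B(\mathfrak{c}(b)_1)\bigr)\,\mathfrak{c}(a)_2\otimes\mathfrak{c}(b)_2\,\bigl(\mathfrak{c}(a)_3,\mathfrak{c}(b)_3\bigr).\]
Here one invokes, in sequence, the compatibility of $\mathfrak{c}$ with comultiplication (so that $\Delta^2\mathfrak{c}(a)=(\mathfrak{c}\otimes\mathfrak{c}\otimes\mathfrak{c})\Delta^2(a)$, and likewise for $b$), compatibility with the antipode (so that $S_B\mathfrak{c}(b_1)=\mathfrak{c}(S_B b_1)$), and compatibility with the skew-Hopf pairing (so that $(\mathfrak{c}(x),\mathfrak{c}(y))=(x,y)$ after identifying the respective factors $|A|!/|\widehat A|!$). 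Assembling these three equalities yields the required matching.

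A preliminary step is to verify that $D\mathfrak{c}$ is well-defined with respect to the localizations involved in $D(\mathbf{H})$, i.e., that $\mathfrak{c}$ sends null divisors of $fac(x_{[1,\gamma_1]}|x_{[\gamma_1+1,\gamma]})$ and $\psi_i(z)$ on the source to (units times) those on the target; this reduces, via the identities in \eqref{edges}, to the same bookkeeping of factors that was already carried out for the $H^0$-action. The main obstacle is therefore not any genuinely new calculation but the careful verification of the cross relation, and specifically keeping track of the combinatorial accounting of the $|A|!$ and $|\widehat A|!$ factors in the pairing. Once the cross relation is verified on generators lying in $\mathbf{H}_{e_i}$ for $i\in I\setminus\{i_+,i_-\}$ and on the generators $\mathbf{H}_{e_{i_+}}\mathbf{H}_{e_{i_-}}$ and $\mathbf{H}_{e_{i_-}}\mathbf{H}_{e_{i_+}}$ (which suffice since these generate $\mathbf{H}^s_{Q,=}$, and the Drinfeld-double cross relation is determined by its values on algebra generators of each factor), the homomorphism property on the full Drinfeld double follows by the algebra relations (1)--(3).
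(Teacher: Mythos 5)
Your proposal is correct and follows essentially the same approach as the paper: reduce to verifying the cross relation of the Drinfeld double on the degree-$(1,1)$ generators involving $i_+,i_-$, and conclude by invoking the compatibility of $\mathfrak{c}$ with $\Delta$, $\Delta^{\mathrm{op}}$, $S$, and the skew-Hopf pairing established earlier in the section. The paper's proof writes out $\Delta^2(f)$, $(\Delta^{\mathrm{op}})^2(g)$, and the resulting expression for $(1\otimes g)(f\otimes1)$ explicitly and then observes that each ingredient is preserved by $\mathfrak{c}$, while you abstract this into the three compatibility statements and additionally flag the (routine but worth noting) well-definedness of $D\mathfrak{c}$ with respect to the localizations; both arguments are sound and of equal rigor.
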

\begin{proof}
It suffices to prove that $D\mathfrak{c}((1\otimes g)(f\otimes1))=D\mathfrak{c}(1\otimes g)D\mathfrak{c}(f\otimes1)$ for $f(x_{i_+},x_{i_-})\in\mathbf{H}^{\mathfrak{e}}_{(1,1)}$ and $g(x_{i_+},x_{i_-})\in\mathbf{H}^{\mathfrak{e},coop}_{(1,1)}$. We have \begin{equation}
\begin{array}{ll}
\Delta^2(f)=&\psi_{i_+}(x_{i_+})\psi_{i_-}(x_{i_-})\otimes\psi_{i_+}(x_{i_+})\psi_{i_-}(x_{i_-})\otimes f(x_{i_+},x_{i_-})\\&+\psi_{i_+}(x_{i_+})\psi_{i_-}(x_{i_-})\otimes f(x_{i_+},x_{i_-})\otimes1+f(x_{i_+},x_{i_-})\otimes1\otimes1,
\end{array}\nonumber
\end{equation}
and
\begin{equation}
\begin{array}{ll}
(\Delta^{op})^2(g)=&g(x_{i_+},x_{i_-})\otimes\phi_{i_+}(x_{i_+})\phi_{i_-}(x_{i_-})\otimes\phi_{i_+}(x_{i_+})\phi_{i_-}(x_{i_-})\\&+1\otimes g(x_{i_+},x_{i_-})\otimes\phi_{i_+}(x_{i_+})\phi_{i_-}(x_{i_-})+1\otimes1\otimes g(x_{i_+},x_{i_-}).
\end{array}\nonumber
\end{equation}
Thus
\begin{equation}
\begin{array}{ll}
(1\otimes g)(f\otimes1)=&(\psi_{i_+}(x_{i_+})\psi_{i_-}(x_{i_-}),1)\psi_{i_+}(x_{i_+})\psi_{i_-}(x_{i_-})\otimes1(f,g)\\&+(\psi_{i_+}(x_{i_+})\psi_{i_-}(x_{i_-}),1)f\otimes g(1,\phi_{i_+}(x_{i_+})\phi_{i_-}(x_{i_-}))\\&+(f,-g\phi_{i_+}(x_{i_+})^{-1}\phi_{i_-}(x_{i_-})^{-1})1\\&\otimes\phi_{i_+}(x_{i_+})\phi_{i_-}(x_{i_-})(1,\phi_{i_+}(x_{i_+})\phi_{i_-}(x_{i_-})),
\end{array}\nonumber
\end{equation}
which is clearly preserved by the edge contraction since each ingredient is preserved by the edge contraction.
\end{proof}

\subsection{Scattering diagram and DT-series}

The edge contraction induces an embedding of the scattering diagramms associated with $(\widehat Q,\widehat W)$ and $(Q,W)$, and a map between their DT-series.

\begin{thm}
Let $Q$ be a quiver such that there's no other arrows from $i_+$ to $i_-$ except $a_0$, and no loop at $i_-$. For any wall $\hat{\mathfrak d}$ of $\widehat{\mathfrak D}$, these exists $k\in\mathbb R$ such that $\eta: \hat{\kappa}=(\hat{\kappa}_i, i\in I\setminus\{+,-\},\hat\kappa_0)\mapsto\kappa=(\kappa_i=\hat{\kappa}_i, i\in I\setminus\{+,-\}, k\kappa_+=\kappa_-, \kappa_++\kappa_-=\hat{\kappa}_0)$ embeds $\hat{\mathfrak d}$ to a wall $\mathfrak d$ in $\mathfrak D$. Moreover, one can choose one $k$ that works for all walls assuming that the number of walls in $\widehat{\mathfrak D}$ is finite, and $\eta$ is linear. In addation, the wall-crossing automorphisms are compatible under $\eta$. Thus $\widehat{\mathfrak D}$ can be embedded in $\mathfrak D$.
\end{thm}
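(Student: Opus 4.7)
The strategy is to realize each wall $\hat{\mathfrak d}$ as the image under a suitably chosen linear $\eta$ of (a subset of) a wall $\mathfrak d$ of $\mathfrak D$, and then to deduce the compatibility of wall-crossing from the edge-contraction isomorphism of heart moduli stacks established in the preceding subsections.

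First, I would observe that once $k \neq -1$ is fixed, the constraints $\kappa_+ + \kappa_- = \hat\kappa_0$ and $\kappa_- = k\kappa_+$ uniquely give $\kappa_+ = \hat\kappa_0/(1+k)$, $\kappa_- = k\hat\kappa_0/(1+k)$, so $\eta$ is linear in $\hat\kappa$. Crucially, $\kappa(\gamma) = \hat\kappa(\hat\gamma)$ whenever $\gamma^{i_+} = \gamma^{i_-}$, so the two stability values agree on dimension vectors supported by the heart locus $\mathbf M_\gamma^{\heartsuit}$. Given a $\hat\kappa$-semistable $\widehat Q$-representation $\widehat E$, the recipe of Section 3.1 (taking $M_{a_0} = \mathrm{id}$) produces $E \in \mathbf M_\gamma^{\heartsuit}$ with $\gamma^{i_+} = \gamma^{i_-} = \hat\gamma^{i_0}$.

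Next I would classify $Q$-subrepresentations $F \subset E$. The \emph{heart-compatible} ones, namely those satisfying $M_{a_0}(F^{i_+}) = F^{i_-}$, are in bijection with $\widehat Q$-subreps $\widehat F \subset \widehat E$ via the construction of Section 3.1; for these $\kappa(F) = \hat\kappa(\widehat F) \leqslant 0$ by $\hat\kappa$-semistability. The remaining \emph{bad} subs have $F^{i_+} \subsetneq F^{i_-}$ (since $M_{a_0}$ is an isomorphism, one always has $F^{i_+} \subseteq F^{i_-}$). The main obstacle lies here: such $F$ do not arise from any sub of $\widehat E$. This is where the hypotheses enter. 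The assumption that $a_0$ is the only arrow $i_+ \to i_-$ and that there is no loop at $i_-$ forces every $Q$-arrow into or out of $i_-$ to have the other endpoint in $I \setminus \{i_+, i_-\}$. Writing $\kappa(F) = \sum_{i \neq i_\pm} \hat\kappa_i \gamma_F^i + \hat\kappa_0 \dim F^{i_+} + \kappa_-(\dim F^{i_-} - \dim F^{i_+})$, one can bound $\kappa(F)$ using $\hat\kappa$ evaluated on an associated sub $\widehat F$ of $\widehat E$ (obtained by enlarging $F$ to a heart-compatible sub, a process whose output is controlled precisely by the above hypothesis on the arrows at $i_-$), and the residual term $\kappa_-(\dim F^{i_-} - \dim F^{i_+})$ becomes a linear condition on $k$. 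Since only finitely many dimension vectors of such bad subs occur on a given wall, finitely many linear inequalities on $k$ must be imposed, and a generic $k$ solves them all.

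Finally, the finiteness assumption on the walls of $\widehat{\mathfrak D}$ reduces the global problem to finitely many such constraints, so a single $k$ works for every wall at once. With this $k$, the bijection between $Z_{\hat\kappa}$-semistable $\widehat Q$-representations and $Z_\kappa$-semistable $Q$-representations in the heart is realized by the isomorphism of stacks $\epsilon: \mathcal M_\gamma^{Z_\kappa\text{-}ss, \heartsuit} \to \mathcal M_{\hat\gamma}^{Z_{\hat\kappa}\text{-}ss}$ from Section 3.2. Theorem~\ref{base} together with the commutativity diagrams already verified in Section 3.2 then shows that $\mathfrak c = \epsilon_* \iota^*$ intertwines the wall-crossing automorphisms $F_{\mathfrak D}(\mathfrak d)$ and $F_{\widehat{\mathfrak D}}(\hat{\mathfrak d})$, yielding the embedding $\widehat{\mathfrak D} \hookrightarrow \mathfrak D$.
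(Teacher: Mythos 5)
Your overall strategy matches the paper's: reduce to checking $\kappa$-semistability of the heart lift $E$ of a $\hat\kappa$-semistable $\widehat Q$-representation, observe that $Q$-subrepresentations $F\subset E$ with $\dim F^{i_+}=\dim F^{i_-}$ pose no problem because they correspond to $\widehat Q$-subs of $\widehat E$, treat the remaining ``bad'' subs with $\dim F^{i_+}<\dim F^{i_-}$ by relating them to a nearby heart-compatible one, extract a one-sided linear constraint on $\kappa_-$ (equivalently on $k$), and close using finiteness of walls and the already-established compatibility of $\mathfrak c$ with the stack-level constructions. The one genuine structural difference is in the middle step: where you \emph{enlarge} a bad sub $F$ to a larger heart-compatible sub $F'\supset F$ and control $\kappa(F)=\kappa(F')-\kappa(F'/F)$, the paper instead \emph{shrinks}, setting $R_{i_-}=M_{a_0}(U_{i_+})$ and cutting preimages of the complement $R'_-$ along all paths into $i_-$ to obtain a smaller heart-compatible sub $R\subset U$ and the leftover piece $U'$, choosing $\kappa^0_-$ so that $\kappa^0(U')=0$ and noting that $\kappa(U)=\kappa(R)+\kappa(U')$ is increasing in $\kappa_-$ because $\dim U'_{i_-}>0$. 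Both are dual moves that produce a one-sided threshold in $\kappa_-$, so the route is genuinely different but not more or less powerful.

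However, one step in your plan is incorrect as stated. You claim that the hypotheses (``$a_0$ is the only arrow $i_+\to i_-$'' and ``no loop at $i_-$'') force every $Q$-arrow into or out of $i_-$ to have its other endpoint in $I\setminus\{i_+,i_-\}$. That is false: arrows $i_-\to i_+$ are not excluded by either hypothesis, nor are loops at $i_+$. Since your argument hinges on this claim to justify that the enlargement process ``is controlled precisely by the above hypothesis on the arrows at $i_-$'', that justification does not go through as written. You would need a different explanation of why the enlargement starting from $F_{i_+}\mapsto M_{a_0}^{-1}(F_{i_-})$ and propagating along arrows out of $i_+$ does not feed back into $i_-$ (this is where the ``no other $i_+\to i_-$ arrow'' hypothesis is actually doing work, but arrows $i_-\to i_+$ must still be handled and are harmless for a different reason, namely they carry $F_{i_-}$ into $F_{i_+}\subseteq M_{a_0}^{-1}(F_{i_-})$ automatically). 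The rest of the plan — the linearity of $\eta$ for fixed $k\neq -1$, the finiteness reduction, and the invocation of $\mathfrak c=\epsilon_*\iota^*$ together with the Section~3.2 commutativity diagrams for wall-crossing compatibility — agrees with what the paper does.
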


\begin{proof}
For any $\hat{\mathfrak d}\in\widehat{\mathfrak D}$ there exists a $\hat\kappa-$semistable $Jac(\widehat Q,\widehat W)$-representation $\widehat V$ for all $\hat\kappa\in\hat{\mathfrak d}$, i.e., $\hat\kappa(\widehat V)=0$ and $\hat\kappa(\widehat R)\leqslant0$ for all subrepresentations $\widehat R\subset\widehat V$. Equivalently, $\kappa(V)=\sum_{i\in I\setminus\{+,-\}}\kappa_i\gamma^i+\kappa_+\gamma^++\kappa_-\gamma^-=\sum_{i\in I\setminus\{+,-\}}\kappa_i\gamma^i+(\kappa_++\kappa_-)\gamma^0=\sum_{i\in I\setminus\{+,-\}}\hat\kappa_i\gamma^i+\hat\kappa_0\gamma^0=\hat\kappa(\widehat V)=0$, and similarly $\kappa(R)\leqslant0$ since $\dim R_+=\dim R_-$. We want $V$ to be $\kappa$-semistable, namely, any subrepresentation $U\subset V$ satisfies $\kappa(U)\leqslant0$, where $\dim U_+\leqslant\dim U_-$. The $R$'s above are subrepresentations of $V$ as $(Q,W)-$representations, so we only need to check the case where $\dim U_+<\dim U_-$. We construct a $R$ from $U$ as follows. We cut $R'_-\subset U_-$ such that $R'_-\oplus R_-=U_-$ and $a_0(U_+)=R_-$. In order to get a subrepresentation we need to further cut the inverse images of $R_-'$ under all paths pointing to $i_-$. The whole cutted part is denoted by $U'$. Fix a $\hat\kappa\in\widehat{\mathfrak D}$ and choose $\kappa_-^0$ such that $\kappa^0(U')=0$. Then $\kappa^0(U)=\kappa^0(R)=\hat{\kappa}^0(\widehat W)=\hat\kappa(\widehat R)\leqslant0$, thus $\kappa(U)\leqslant\kappa^0(U)\leqslant0$ for any $\kappa$ whose $\kappa_-\leqslant\kappa_-^0$. There are finitely many subrepresentations $U\subset V$, thus finitely many $\kappa^0_-$, so these exists a $\kappa$ such that $\kappa(U)\leqslant\kappa^0(U)\leqslant0$ for all $\kappa^0$. Since $\hat{\mathfrak d}$ is a cone, there are $\hat m_1^{\hat{\mathfrak d}},\ldots,\hat m_p^{\hat{\mathfrak d}}\in\widehat M_{\mathbb R}$ such that $\hat{\mathfrak d}=\{\sum c_l^{\hat{\mathfrak d}}\hat m_l^{\hat{\mathfrak d}}\mid c_l^{\hat{\mathfrak d}}\in\mathbb R_{\geqslant0}\}$. For any $\hat m_l^{\hat{\mathfrak d}}$ there's a $\kappa_{l,-}^{\hat{\mathfrak d},0}$ as above. 
We can find a $k$ such that $\kappa_{l,+}^{\hat{\mathfrak d}}+\kappa_{l,-}^{\hat{\mathfrak d}}=\hat m_{l,0}^{\hat{\mathfrak d}}$, $\kappa_{l,-}^{\hat{\mathfrak d}}=k\kappa_{l,+}^{\hat{\mathfrak d}}$, and $\kappa_{l,-}^{\hat{\mathfrak d}}\leqslant\kappa_{l,-}^{\hat{\mathfrak d},0}$ for any $l=1,\ldots,p$, namely, $\eta(\hat m_l^{\hat{\mathfrak d}})=\kappa_l^{\hat{\mathfrak d}}\in\mathfrak d$. Since $\eta$ is linear, $\eta(\sum c_l^{\hat{\mathfrak d}}\hat m_l^{\hat{\mathfrak d}})=\sum c_l^{\hat{\mathfrak d}}\eta(\hat m_l^{\hat{\mathfrak d}})\in\mathfrak d$ because $\mathfrak d$ is a cone. Thus there exists a $k$ such that $\eta(\hat{\mathfrak d})\subset\mathfrak d\subset M_{\mathbb R}$ for any $\hat{\mathfrak d}\in\widehat{\mathfrak D}$ since $\widehat{\mathfrak D}$ has finitely walls. Therefore the underlying cone complex of $\hat{\mathfrak D}$ is embedded in $\mathfrak D$.

By the same argument, the wall-crossing automorphisms associated to the corresponding walls $\hat{\mathfrak d}$ and $\mathfrak d$ are compatible. A sufficiently general path $\hat p\subset\widehat M_{\mathbb R}$ (with end points not in the intersections of $\eta(\widehat{\mathfrak D})$ and any $\mathfrak d\in\mathfrak D$) gives rise to a (sufficiently general) path $p\subset M_{\mathbb R}$ with the same endpoints. Different choices of $p$ gives the same wall-crossing automorphisms $F(p)$. Then $F(p)$ is compatlbe with $F(\hat p)$ if we map the automorphisms given by representations with $\dim V_+\neq\dim V_-$ to 0.
\end{proof}


\subsection{Compatibility with mutations}

The mutations of quivers with potential is introduced in \cite{DWZ}. Given a quiver with potential $(Q,W)$, the mutation at $i\in Q_0$, denoted by $\mu_i$, is an operation to get another quiver with potential $\mu_i(Q,W)$. There is an intermediate step $\tilde{\mu}_i$ in the construction of $\mu_i(Q,W)$. Please refer to \cite[Sec.~5]{DWZ} for details.

The mutations are expected to closely related to braid group actions. In this section we study how the mutations $\mu_{i_+}$ and $\mu_{i_-}$ of $(Q,W)$ are related to the mutation $\mu_{i_0}$ of $(\widehat Q, \widehat W)$. In order to perform mutations, we assume that the quivers $Q$ and $\widehat Q$ have neither loops, nor two cycles involving $i_+$, $i_-$ and $i_0$. Because $\widehat Q$ has no loops, $a_0$ is the only arrow connecting $i_+$ and $i_-$. Since $i_0$ should not belong any two cycles, there's no cycle of length three involving $i_+$. 

\begin{thm}
Let $(Q,W)$ and $(\widehat Q,\widehat W)$ satisfy the above assumptions. If in addition $i_+$ is not the source of any other arrows besides $a_0$, then $\mu_{i_+}\mu_{i_-}\mu_{i_+}$ coincides with $\mu_{i_0}$ under the edge contraction. If $i_-$ is not the target of any other arrows, then $\mu_{i_-}\mu_{i_+}\mu_{i_-}$ coincides with $\mu_{i_0}$ under the edge contraction.
\end{thm}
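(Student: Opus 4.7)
The plan is a direct bookkeeping of the DWZ mutation procedure applied three times and comparison with $\mu_{i_0}$ after edge contraction. Fix $(Q,W)$ satisfying the standing assumption that at $i_+$ the only outgoing arrow is $a_0$, and that $i_+,i_-,i_0$ lie on no loops or $2$-cycles.

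First, I compute $\mu_{i_+}(Q,W)$. Because $a_0$ is the only arrow leaving $i_+$, every length-two path through $i_+$ has the form $j\xrightarrow{a}i_+\xrightarrow{a_0}i_-$ for some $a:j\to i_+$. Hence $\tilde\mu_{i_+}$ introduces composite arrows $[a_0a]:j\to i_-$, reverses $a_0$ to $a_0^*:i_-\to i_+$ and each $a$ to $a^*:i_+\to j$, and updates the potential to $[W]+\sum_a [a_0a]\,a^*\,a_0^*$, where $[W]$ is obtained from $W$ by substituting $[a_0a]$ for each occurrence of $a_0a$. Since $i_+$ was not on any $2$-cycle in $Q$, no reduction is needed at this stage.

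Second, I apply $\mu_{i_-}$ to the result. The incoming arrows at $i_-$ are now the original $b:k\to i_-$ (with $b\neq a_0$) together with the new $[a_0a]:j\to i_-$; the outgoing arrows are the original $c:i_-\to l$ and $a_0^*:i_-\to i_+$. So $\tilde\mu_{i_-}$ introduces composites $[cb]$, $[a_0^*b]$, $[c[a_0a]]$, $[a_0^*[a_0a]]$ and modifies the potential correspondingly. The crucial observation is that the cubic term $[a_0a]\,a^*\,a_0^*$ inherited from step one, after the substitution $a_0^*[a_0a]\mapsto[a_0^*[a_0a]]$, contributes a quadratic summand $[a_0^*[a_0a]]\,a^*$; DWZ reduction now kills the $2$-cycle between $[a_0^*[a_0a]]:j\to i_+$ and $a^*:i_+\to j$, removing those two arrows and propagating the resulting substitution through the remaining potential. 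Third, I apply $\mu_{i_+}$ once more: at this stage the only outgoing arrow at $i_+$ is again a single arrow $a_0^{**}:i_+\to i_-$ (the double reverse of $a_0$), so the same simple analysis as in step one applies, and the arrow that remains between $i_+$ and $i_-$ is the one along which I contract.

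Finally, I compare the contracted triple-mutation output with $\mu_{i_0}(\widehat Q,\widehat W)$. The in-arrows to $i_0$ in $\widehat Q$ are either the original $a:j\to i_+$ or the composite $a_0^{-1}b:k\to i_0$ for $b:k\to i_-$, and the out-arrows are the composites $aa_0:i_0\to l$ for $a:i_-\to l$; applying $\mu_{i_0}$ reverses these and adds all pairwise composites. After the identifications $[a_0a]\leftrightarrow a_0^{-1}a$, $aa_0\leftrightarrow a$ etc., I verify bijectively that the surviving arrows produced by the three DWZ reductions match those produced by $\mu_{i_0}$ and that the potentials agree. The main obstacle is the combinatorial accounting of the potential terms and the reductions at each intermediate stage, in particular identifying which cubic summands of $\tilde W_2$ genuinely produce $2$-cycles that must be reduced and which cancel trivially; checking that every such reduction matches an identification already built into the edge contraction is the core of the verification. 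The second statement, where $i_-$ is not the target of any other arrow, follows by the symmetric calculation with the roles of sources and targets, and of $i_+$ and $i_-$, interchanged.
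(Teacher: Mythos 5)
Your proposal follows essentially the same route as the paper: apply the DWZ mutation procedure at $i_+$, then $i_-$, then $i_+$, track the composite arrows introduced and the potential term $[a_0a]a^*a_0^*$ whose substitution produces the $2$-cycle $[a_0^*[a_0a]]a^*$ to be reduced, then compare the reduced output with $\mu_{i_0}(\widehat Q,\widehat W)$ after identifying $[a_0a]$ with $a_0^{-1}a$, etc. Your sketch is slightly less explicit at the third mutation (where a second $2$-cycle reduction, killing $b^*$ and $a_0[a_0^*b]$ via the term $\sum[a_0^*b]b^*a_0$, must also be carried out), but the overall strategy and the key reduction step agree with the paper's argument.
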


\begin{proof}
We prove the first case, i.e., $i_+$ is not the source of any other arrows besides $a_0$. The proof of the second case is very similar. First let's compute $\mu_{i_+}\mu_{i_-}\mu_{i_+}(Q,W)$. After mutating at $i_+$, we get a quiver $\mu_{i_+}(Q)$ with $a_0$ and $a: i\rightarrow i_+$ reversed, i.e., they are replaced by $a_0^*: i_-\rightarrow i_+$ and $a^*: i_+\rightarrow i$ respectively; moreover, one adds new arrows $[a_0a]: i\rightarrow i_-$ for any $a: i\rightarrow i_+$. The potential $\mu_{i_+}(W)=[W]+\sum_{t(a)=i_+}[a_0a]a^*a_0^*$, where $[W]$ is obtained by replacing $a_0a$ in $W$ with $[a_0a]$. Then we mutate $\mu_{i_+}(Q,W)$ at $i_-$. This gives us a quiver $\mu_{i_-}\mu_{i_+}(Q)$ with all the arrows connecting $i_-$ reversed. In particular, $a_0^*$ becomes $a_0$ and $[a_0a]$ becomes $[a_0a]^*$. Moreover, new arrows $[a_0^*b]$ and $[cb]$ are added for any $b: i\rightarrow i_-$ and $c: i_-\rightarrow i$ where $i\neq i_+$, and the trivial part is deleted. The potential $\mu_{i_-}\mu_{i_+}(W)=([[W]]+[cb]b^*c^*)_{red}+\sum_{t(b)=i_-}[a_0^*b]b^*a_0$, where $[[W]]$ is obtained by replacing $a_0^*b$ in $[W]$ with $[a_0^*b]$, and $([[W]]+[cb]b^*c^*)_{red}$ is obtained by deleting the trivial part. Note that the arrows $a^*: i_+\rightarrow i$ and $[a_0^*[a_0a]]: i\rightarrow i_+$ lead to a trivial part of $\tilde{\mu}_{i_-}\mu_{i_+}(Q,W)$, so they are deleted to get ${\mu_{i_-}\mu_{i_+}}(Q,W)$. The last step is to mutate ${\mu_{i_-}\mu_{i_+}}(Q,W)$ at $i_+$. The resulted quiver has $a_0^*$ and $[a_0^*b]^*$ the only arrows connecting $i_+$, and $b^*$ and $a_0[a_0^*b]$ are deleted. Namely, the arrows $a: i\rightarrow i_+$ in $Q$ become $[a_0a]^*: i_-\rightarrow i$ in $\mu_{i_+}\mu_{i_-}\mu_{i_+}(Q)$, and $b: i\rightarrow i_-$ become $[a_0^*b]^*: i_+\rightarrow i$ in $\mu_{i_+}\mu_{i_-}\mu_{i_+}(Q)$. The potential $\mu_{i_+}\mu_{i_-}\mu_{i_+}(W)=[[W]]_{red}$. Thus by comparing $\mu_{i_+}\mu_{i_-}\mu_{i_+}(Q,W)$ obtained above with $\mu_{i_0}(\widehat Q, \widehat W)$, we can see that they coincide under the edge contraction.
\end{proof}

\subsection{Compatibility with dimensional reduction}\label{cdr}

Fix a quiver $Q$ and construct its double quiver $\overline{Q}$, the preprojective algebra $\Pi_Q$ and the triple quiver with potential $(\widetilde{Q},W)$ as in Section \ref{DimRed}. We contract $(\widetilde{Q},W)$ along $a_0\in\Omega$ and show that it gives the edge contraction of $\Pi_Q$.

Denote $(\widehat{\widetilde{Q}},\widehat{W})$ the quiver with potential after contraction along $a_0$, then $$\widehat{W}=\sum_{a\in\Omega,a\neq a_0}\hat{l}[\hat{a},\hat{a^*}]+\sum_{\substack{a\in\Omega,s(a)=i_- or\\ t(a)=i_-,a\neq a_0}}\hat{l}_{i_-}[\hat{a},\hat{a^*}]+\hat{l}_{i_-}\hat{a_0^*}-l_{i_+}\hat{a_0^*},$$
where $\hat{l}=\sum_{i\neq i_-}l_i$. One can show that $\widehat{W}=W$ via $a\mapsto\hat{a}$, $a^*\mapsto\hat{a^*}$ and $l_i\mapsto\hat{l}_i$, since $\hat{l}_{i_-}\hat{a_0^*}-l_{i_+}\hat{a_0^*}=a_0^{-1}l_{i_-}a_0a_0^*a_0-l_{i_+}a_0^*a_0=l_{i_-}a_0a_0^*-l_{i_+}a_0^*a_0=l_{i_0}[a_0,a_0^*]$.

Take $\{\hat{l}_i\}=\{l_i,i\neq i_-,\hat{l}_{i_-}\}$ to be a cut, then 
\begin{equation}\label{pd1}
\frac{\partial\widehat{W}}{\partial l_i}=\sum_{t(a)=i,a\neq a_0}\hat{a}\hat{a^*}-\sum_{s(a)=i,a\neq a_0}\hat{a^*}\hat{a}, \quad i\neq i_+\;or\;i_-
\end{equation}
\begin{equation}\label{pd2}
\frac{\partial\widehat{W}}{\partial l_{i_+}}=\sum_{t(a)=i_+,a\neq a_0}\hat{a}\hat{a^*}-\sum_{s(a)=i_+,a\neq a_0}\hat{a^*}\hat{a}-\hat{a_0^*},
\end{equation}
and \begin{equation}\label{pd3}
\frac{\partial\widehat{W}}{\partial \hat{l}_{i_-}}=\sum_{t(a)=i_-,a\neq a_0}\hat{a}\hat{a^*}-\sum_{s(a)=i_-,a\neq a_0}\hat{a^*}\hat{a}+\hat{a_0^*}.
\end{equation}
Eliminating $\hat{a_0^*}$ by adding up (\ref{pd2}) and (\ref{pd3}), and together with (\ref{pd1}), we get ADHM for $\overline{\widehat Q}$, namely, all the relations in $\Pi_{\widehat Q}$.

To conclude, we have the following commutative diagram:

$$\begin{xy}
(0,20)*+{\coh_{c,\mathbf{G}_\gamma}^{\bullet,crit}(\mathbf{M}_{\widetilde{Q},\gamma}^{sp},W_\gamma)^\vee}="v1";
(80,20)*+{\coh_{c,\mathbf{G}_{\hat{\gamma}}}^{\bullet,crit}(\mathbf{M}_{\widehat{\widetilde{Q}},\hat{\gamma}}^{sp},\widehat{W}_{\hat{\gamma}})^\vee}="v2";
(0,0)*+{\coh_{c,\mathbf{G}}^{\bullet}(\mathbf{M}_{\Pi_Q,\gamma}^{sp},\QQ)^{\vee}\otimes\T^{-\gamma\cdot\gamma}}="v3";
(80,0)*+{\coh_{c,\mathbf{G}_{\hat{\gamma}}}^{\bullet}(\mathbf{M}_{\Pi_{\widehat Q},\hat{\gamma}}^{sp},\QQ)^{\vee}\otimes\T^{-\hat{\gamma}\cdot\hat{\gamma}}}="v4";
{\ar@{->}^{\mathfrak{c}} "v1";"v2"};
{\ar@{->}^{\mathfrak{c}} "v3";"v4"};{\ar@{->}^{\wr} "v1";"v3"}; {\ar@{->}^{\wr} "v2";"v4"}
\end{xy}$$



\subsection{2d edge contraction}

We will study how the edge contraction $\mathfrak{c}$ will affect the preprojective algebra $\Pi_Q$ of a quiver $Q$. The result is the same as the one induced by dimensional reduction in section \ref{cdr}. For any arrow $a\in\overline{\Omega}$, set $\hat{a}=aa_0$ if $t(a_0)=s(a)$, $\hat{a}=a_0^{-1}a$ if $t(a_0)=t(a)$, and $\hat{a}_{i_-}=a_0^{-1}a_{i_-}a_0$ if $t(a)=s(a)=i_-$, and $\hat{a}=a$ otherwise. The ADHM relation defining $\Pi_Q$ is $$\sum_{a\in\Omega}[a,a^*]=\sum_{\substack{s(a)\neq i_-,\\t(a)\neq i_-}}[a,a^*]+\sum_{\substack{s(a)=i_-, or\\t(a)=i_-,a\neq a_0}}[a,a^*]+[a_0,a_0^*]=0.$$
This is equivalent to
\begin{equation}\label{ADHM1}
\sum_{\substack{t(a)=i,\\a\neq a_0}}aa^*-\sum_{\substack{s(a)=i,\\a\neq a_0}}a^*a=0,\quad i\neq i_+\;or\;i_-,
\end{equation}
\begin{equation}\label{ADHM2}
\sum_{\substack{t(a)=i_+,\\a\neq a_0}}aa^*-\sum_{\substack{s(a)=i_+,\\a\neq a_0}}a^*a-a_0^*a_0=0,
\end{equation}
and
\begin{equation}\label{ADHM3}
\sum_{\substack{t(a)=i_-,\\a\neq a_0}}aa^*-\sum_{\substack{s(a)=i_-,\\a\neq a_0}}a^*a+a_0^*a_0=0,
\end{equation}

Note that (\ref{ADHM1}) is equivalent to
\begin{equation}\label{ecADHM1}
\sum_{\substack{t(a)=i,\\a\neq a_0}}aa_0a_0^{-1}a^*-\sum_{\substack{s(a)=i,\\a\neq a_0}}a^*a_0a_0^{-1}a=\sum_{\substack{t(a)=i,\\a\neq a_0}}\hat{a}\hat{a^*}-\sum_{\substack{s(a)=i,\\a\neq a_0}}\hat{a^*}\hat{a}=0,\quad i\neq i_+\;or\;i_-=0,
\end{equation}
and $a_0^{-1}(\ref{ADHM3})a_0$ gives us
\begin{equation}\label{ecADHM3}
\sum_{\substack{t(a)=i_-,\\a\neq a_0}}\hat{a}\hat{a^*}-\sum_{\substack{s(a)=i_-,\\a\neq a_0}}\hat{a^*}\hat{a}+a_0^*a_0=0,
\end{equation}

We eliminate $a_0^*$ by adding up (\ref{ADHM2}) and (\ref{ecADHM3}), and then together with (\ref{ADHM1}) we get the ADHM relation of $\widehat{Q}$. Thus the edge contraction of $Q$ induces the edge contraction of the corresponding preprojective algebras.


Addresses:

Y. L.: Department of Mathematics, SUNY Buffalo, Buffalo, NY 14260, USA, {yiqiang@buffalo.edu}

J. R.: Department of Mathematics, SUNY Buffalo, Buffalo, NY 14260, USA, {jren5@buffalo.edu}


\begin{thebibliography}{99}

\bibitem{Bou} P. Bousseau, {\em Scattering diagrams, stability conditions and coherent sheaves on $\mathbb P^2$}, J. Alg. Geom., {\bf31} (2022), 593-686.

\bibitem{B} T. Bridgeland, {\em Stability conditions on triangulated categories}, Ann. Math, {\bf166} (2007), 317-345.

\bibitem{B1} T. Bridgeland, {\em Scattering diagrams, Hall algebras and stability conditions}, Alg. Geom., {\bf4}(5) (2017), 523-561.

\bibitem{Bu} A. Buch, talk at AMS 2023 Fall Eastern Sectional Meeting.

\bibitem{C} C. Chevalley, {\em Anneaux de Chow et applications}, Secrétariat mathématique, (1958).

\bibitem{CG} N. Chriss, V. Ginzburg, {\em Representation Theory and Complex Geometry}, Birkh\"{a}user, Boston-Basel-Berlin, 1997.

\bibitem{D1} B. Davison, {\em The critical COHA of a quiver with potential}, Quart. J. Math. {\bf68} (2017), 635–703.

\bibitem{D2} B. Davison, {\em BPS Lie algebras and the less perverse filtration on the preprojective CoHA}, arXiv:2007.03289.

\bibitem{DMa} B. Davison, T. Mandel, {\em Strong positivity for quantum theta bases of quantum cluster algebras}, Invent. Math. {\bf226} (2021), 725-843.
 
\bibitem{DMSS} B. Davison, D. Maulik, J. Schürmann, B. Szendrói, {\em Purity of graded potentials and quantum cluster positivity}, Compos. Math. {\bf 150}(10) (2015), 1913-1944.

\bibitem{DM} B. Davison, S. Meinhardt, {\em Cohomological Donaldson–Thomas theory of a quiver with potential and quantum enveloping algebras}, Invent. Math. {\bf 221} (2020), 777–871.

\bibitem{DWZ} H. Derksen, J. Weyman, A. Zelevinsky, {\em Quivers with potentials and their representations I: Mutations}, Sel. math., New ser. {\bf14} (2008), 59–119.

\bibitem{EG} D. Edidin, W. Graham, {\em Equivariant intersection theory (with an appendix by Angelo Vistoli: The Chow ring of M2)}, Invent. Math. {\bf 131}(3) (1998), 595-643.

\bibitem{GHKK} M. Gross, P. Hacking, S. Keel and M. Kontsevich, {\em Canonical bases for cluster algebras}, J. Amer. Math. Soc. {\bf31} (2018), 497-608.

\bibitem{Jos} A. Joseph, {\em Quantum groups and their primitive ideals}, Ergebnisse der Mathematik und ihrer Grenzgebiete (3) [Results in Mathematics and Related Areas (3)], {\bf29}. Springer- Verlag, Berlin, 1995. x+383 pp. ISBN: 3-540-57057-8.

\bibitem{Jo1} D. Joyce, {\em Motivic invariants of Artin stacks and `stack functions'}, Q. J. Math. {\bf58} (2007), 345-392. arXiv:math/0509722.

\bibitem{KaSc} M. Kashiwara, P. Schapira, {\em Sheaves on Manifolds}, Springer-Verlag, 1990.

\bibitem{K} A.King, {\em Moduli of representations of finite-dimensional algebras}, Quart. J. Math. {\bf 45}(2) (1994), 515-530.

\bibitem{KoSo} M. Kontsevich, Y. Soibelman, {\em Affine structure and non-Achimedean analytic spaces}, The Unity of Mathmatics, Progr. Math. vol. 244, (Birkh äuser Boston 2006), 321-385.

\bibitem{KoSo1} M. Kontsevich, Y. Soibelman, {\em Notes on $A_{\infty}$-algebras, $A_{\infty}$-categories and non-commutative geometry. I}, arXiv: 0606241v2.

\bibitem{KoSo2} M. Kontsevich, Y. Soibelman, {\em Stability structures, motivic Donaldson-Thomas invariants and cluster transformations}, arXiv: 0811.2435.

\bibitem{KoSo3} M. Kontsevich, Y. Soibelman, {\em Cohomological Hall algebra, exponential Hodge structures and motivic Donaldson-Thomas invariants}, Comm. Num. Th. and Phys. {\bf 5} (2011), no. {\bf 2}, 231-252.

\bibitem{Li1} Y. Li, {\em Quantum groups and edge contractions}, arXiv: 2308.16306.

\bibitem{Li2} Y. Li, {\em Embeddings among quantum affine $\mathfrak{sl}_n$}, Acta Mathematica Sinica, English Series, to appear.

\bibitem{LS} Y. Li, A. Samer, {\em On multiplication formulas of affine q-Schur algebras}, J. of Pure and Applied Algebra
Vol. {\bf226}, Iss. {\bf7} (2022).

\bibitem{L1} G. Lusztig, {\em Quivers, perverse sheaves, and quantized enveloping algebras}, Journal of the AMS, Vol. {\bf 4}, No. {\bf 2} (1991), 365-421.

\bibitem{Mak} R. Maksimau, {\em Categorical representations and KLR algebras}, Algebra Number Theory {\bf12}, no. {\bf8} (2018), 1887–1921.

\bibitem{M} L. Mou, {\em Scattering diagrams of quivers with potentials and mutations}, arXiv: 1910.13714.

\bibitem{P} T. P\u{a}durariu, {\em Categorical and K-theoretic Hall algebras for quivers with potential}, J. Inst. Math. Jussieu, {\bf22}(6) (2023), 2717–2747.

\bibitem{RSYZ} M. Rapčák, Y. Soibelman, Y. Yang, G. Zhao, {\em Cohomological Hall algebras and perverse coherent sheaves on toric Calabi-Yau 3-folds}, arXiv: 2007.13365.

\bibitem{RSYZ1} M. Rapčák, Y. Soibelman, Y. Yang, G. Zhao, {\em Cohomological Hall algebras, vertex algebras and instantons},  Comm. in Mathematical Physics {\bf376} (2020) 1803–1873.

\bibitem{RS} J. Ren, Y. Soibelman, {\em Cohomological Hall algebras, semicanonical bases and Donaldson-Thomas invariants for 2-dimensional Calabi-Yau categories (with an appendix by Ben Davison)}, Algebra, Geometry and Physics in the $21^{st}$ Century --- Kontsevich Festschrift, Birkh\"{a}user/Springer.

\bibitem{R} J. Ren, {\em Correspondence between 2 Calabi-Yau Categories and Quivers},\\ arXiv: 1602.06427.

\bibitem{RW} S. Riche, G. Williamson, {\em Tilting modules and the p-canonical basis}, Ast érisque {\bf397} (2018).

\bibitem{Rin} C. M. Ringel, {\em Hall algebras and quantum groups}, Invent Math {\bf101} (1990), 583–591.

\bibitem{Sa} M. Saito, {\em Introduction to mixed Hodge modules}, Astérisque {\bf179–180} (1989), 145–162.

\bibitem{Sa1} M. Saito, {\em A young person's guide to mixed Hodge modules}, arXiv: 1605.00435.

\bibitem{SV} O. Schiffmann, E. Vasserot, {\em Cherednik algebras, W-algebras and the equivariant cohomology of the moduli space of instantons on $\mathbb{A}^2$}, Pub. Mat. de l'IHES (2012).

\bibitem{Sch} C. Schnell, {\em A overview of Morihiko Saito's theory of mixed Hodge modules}.

\bibitem{So}  Y. Soibelman, {\em Remarks on Cohomological Hall algebras and their representations}, arXiv:1404.1606.

\bibitem{U} M. Ueda, {\em A homomorphism from the affine Yangian $Y_{\hbar,\epsilon}(\widehat{\mathfrak{sl}}(n))$ to the affine Yangian associated with $Y_{\hbar,\epsilon}(\widehat{\mathfrak{sl}}(n+1))$}, arXiv: 2312.09933.

\bibitem{YaZha} Y. Yang, G. Zhao, {\em Cohomological Hall algebras and affine quantum groups}, Selecta Mathematica {\bf24} (2018), 1093–1119.

\bibitem{YaZha1} Y. Yang, G. Zhao, {\em The cohomological Hall algebra of a preprojective algebra}, Proc. London Math. Soc.(3) 116 (2018) 1029–1074.

\end{thebibliography}
\end{document}